\documentclass[reqno,11pt,letterpaper]{amsart}

\usepackage{lipsum}
\usepackage{amsmath}
\usepackage{amssymb}
\usepackage{amsthm}
\usepackage{mathrsfs}
\usepackage{accents}
\usepackage{calc}
\usepackage{arydshln}
\usepackage{upgreek}
\usepackage{slashed}
\usepackage{xifthen}
\usepackage{graphicx}
\usepackage{longtable}
\usepackage[inline]{enumitem}

%\usepackage[mathlines]{lineno}
%\runninglinenumbers

\usepackage{xcolor}
\definecolor{winered}{rgb}{0.6,0,0}
\definecolor{lessblue}{rgb}{0,0,0.7}

% colors for work on planes
%\definecolor{bgyellow}{rgb}{0.9,0.75,0.2}
%\definecolor{drkgray}{rgb}{0.05,0.05,0.05}
%\pagecolor{drkgray}
%\color{bgyellow}
%\definecolor{wineredpl}{rgb}{1,0.4,0}
%\definecolor{lessbluepl}{rgb}{0.3,0.6,1}
%\usepackage[pdftex,colorlinks=true,linkcolor=wineredpl,citecolor=lessbluepl,breaklinks=true,bookmarksopen=true]{hyperref}
\usepackage[pdftex,colorlinks=true,linkcolor=winered,citecolor=lessblue,urlcolor=lessblue,breaklinks=true,bookmarksopen=true]{hyperref}

\hyphenation{Schwarz-schild}
\hyphenation{Min-kow-ski}
\hyphenation{Pro-po-si-tion}
\hyphenation{hy-per-sur-face}

\setcounter{tocdepth}{3}
\setcounter{secnumdepth}{3}

% for keeping track of consecutive numberings
\makeatletter
\newcommand{\myitem}[3]{\item[#2]\def\@currentlabel{#3}\label{#1}}
\makeatother

\addtolength{\textheight}{0.4in}
\addtolength{\oddsidemargin}{-0.5in}
\addtolength{\evensidemargin}{-0.5in}
\addtolength{\textwidth}{1.0in}
\setlength{\topmargin}{0.00in}
\setlength{\headheight}{0.18in}
\setlength{\marginparwidth}{1.0in}
\setlength{\abovedisplayskip}{0.2in}
\setlength{\belowdisplayskip}{0.2in}
\setlength{\parskip}{0.05in}

%\newcommand{\indentalign}{\hspace{0.3in}&\hspace{-0.3in}}

% table of contents
\usepackage{titletoc}

% make TOC look better
\makeatletter

% 1st argument: 0=section, 2=subsection, 3=subsubsection
% 2nd argument: from 2nd argument of \l@section (not used here)
% 3rd argument: initial indent
% 4th argument: doesn't exist for sections, only for sub(sub)sections, and specifies extra indent
% 5th argument: empty (?)
% 6th argument: formatted content line (no page numbers or dots yet though)
% 7th argument: page number
\def\@tocline#1#2#3#4#5#6#7{
\begingroup
  \par
    \parindent\z@ \leftskip#3 \relax \advance\leftskip\@tempdima\relax
                  \rightskip\@pnumwidth plus 4em \parfillskip-\@pnumwidth
    % extra indent
    \ifcase #1 % sections
       \vskip 0.6em \hskip 0em % add a little vspace before
       \or
       \or \hskip 0em % subsections
       \or \hskip 1em % subsubsections
    \fi%
    %
    % write content line
    #6
    %
    % dots
    \nobreak\relax{\leavevmode\leaders\hbox{\,.}\hfill}
    \hbox to\@pnumwidth {\@tocpagenum{#7}}
  \par
\endgroup
}

% 2nd argument: feeds into 2nd argument of \@tocline
% 3rd argument: offset of sections (not subsections etc)
 \def\l@section{\@tocline{0}{0pt}{0pc}{}{}}

% 1st argument: `Appendix'
% 2nd argument: 2.2 (empty for unnumbered sections!)
% 3rd argument: title
\renewcommand{\tocsection}[3]{%
  \indentlabel{\@ifnotempty{#2}{ % for numbered sections
    \ignorespaces\bfseries{#2. #3}}}
  \indentlabel{\@ifempty{#2}{\ignorespaces\bfseries{#3}}{}} % for unnumbered sections
    \vspace{1.5pt}}

\renewcommand{\tocsubsection}[3]{%
  \indentlabel{\@ifnotempty{#2}{
    \ignorespaces#2. #3}}
  \indentlabel{\@ifempty{#2}{\ignorespaces #3}{}}
    \vspace{1.5pt}}

\renewcommand{\tocsubsubsection}[3]{%
  \indentlabel{\@ifnotempty{#2}{
    \ignorespaces#2. #3}}
  \indentlabel{\@ifempty{#2}{\ignorespaces #3}{}}
    \vspace{1.5pt}}

\makeatother

%%%%%%%%%%%%%%%%%%%%%
% for notation index
\makeatletter
\def\@nomenstarted{0}
\newlength{\@nomenoldtabcolsep}

% start notation list
\newcommand{\nomenstart}
  {%
    \def\@nomenstarted{1}%
    \setlength{\@nomenoldtabcolsep}{\tabcolsep}%
    \setlength{\tabcolsep}{3.5pt}%
    \begin{longtable}{p{0.11\textwidth} p{0.86\textwidth}}%found by hand
  }

% add item; @nomenstarted ensures that the very last entry does not get a newline at the end (which would cause a large vskip below the longtable)
\newcommand{\nomenitem}[2]{%
    \ifcase\@nomenstarted%
      \or % if nomenstarted=1, do nothing
      \or \\ % if nomenstarted=2, add newline to previous one
    \fi%
    #1\,{\leavevmode\leaders\hbox{\,.}\hfill} & #2%
    \def\@nomenstarted{2}%
  }%
\newcommand{\nomenend}
  {\\%
      \end{longtable}%
      \setlength{\tabcolsep}{\@nomenoldtabcolsep}%
      \def\@nomenstarted{0}%
  }
\makeatother

% big parentheses (from https://tex.stackexchange.com/questions/6794/about-big-parenthesis-larger-than-bigg)
\makeatletter
\newcommand{\vast}{\bBigg@{4}}
\newcommand{\Vast}{\bBigg@{5}}
\newcommand{\VAST}[1]{\bBigg@{#1}}
\makeatother

% split long equation environments
\allowdisplaybreaks

% theorem environments
\numberwithin{equation}{section}
\numberwithin{figure}{section}
\newtheorem{thm}{Theorem}[section]

\newtheorem{prop}[thm]{Proposition}
\newtheorem{lemma}[thm]{Lemma}
\newtheorem{problem}[thm]{Problem}
\newtheorem{cor}[thm]{Corollary}
\newtheorem*{thm*}{Theorem}
\newtheorem*{prop*}{Proposition}
\newtheorem*{cor*}{Corollary}
\newtheorem*{conj*}{Conjecture}

\theoremstyle{definition}
\newtheorem{definition}[thm]{Definition}

\theoremstyle{remark}
\newtheorem{rmk}[thm]{Remark}

% fixing hyperlinks to theorem environments that start with an enumeration
% Based on https://tex.stackexchange.com/questions/372922/hyperlinks-theorems-enumerate-and-autoref. Additional fix: increase hyperref counter
\makeatletter
\newcommand{\fakephantomsection}{%
  \Hy@MakeCurrentHref{\@currenvir.\the\Hy@linkcounter}
  \Hy@raisedlink{\hyper@anchorstart{\@currentHref}\hyper@anchorend}%
  \Hy@GlobalStepCount\Hy@linkcounter%
}
\makeatother

%%%%%%%%%%%%%%%%%%%%%%%%%%%%%%%

% mathcal
\newcommand{\mc}{\mathcal}
\newcommand{\cA}{\mc A}

\newcommand{\cC}{\mc C}

\newcommand{\cE}{\mc E}
\newcommand{\cF}{\mc F}
\newcommand{\cG}{\mc G}

\newcommand{\cK}{\mc K}

\newcommand{\cS}{\mc S}

\newcommand{\cU}{\mc U}

% mathscr
\newcommand{\ms}{\mathscr}

\newcommand{\sS}{\ms S}

% mathbb

\newcommand{\C}{\mathbb{C}}
\newcommand{\N}{\mathbb{N}}
\newcommand{\R}{\mathbb{R}}
\newcommand{\Z}{\mathbb{Z}}

\newcommand{\Sph}{\mathbb{S}}

% mathsf

% mathbf

% mathfrak

\newcommand{\fp}{\mathfrak{p}}

% Lie algebras

% slashed

% mathrm

% hatted

% ringed

% mode analysis

% operators

\newcommand{\ran}{\operatorname{ran}}
\newcommand{\ann}{\operatorname{ann}}

\newcommand{\coker}{\operatorname{coker}}

\newcommand{\Hom}{\operatorname{Hom}}
\renewcommand{\Re}{\operatorname{Re}}
\renewcommand{\Im}{\operatorname{Im}}

\newcommand{\supp}{\operatorname{supp}}

\newcommand{\tr}{\operatorname{tr}}

\newcommand{\ord}{\operatorname{ord}}
\newcommand{\rank}{\operatorname{rank}}

\newcommand{\diag}{\operatorname{diag}}
\newcommand{\Res}{\operatorname{Res}}

% miscellaneous abbreviations

\newcommand{\eps}{\epsilon}
\newcommand{\ftrans}{\;\!\wh{\ }\;\!}

\newcommand{\hra}{\hookrightarrow}
\newcommand{\la}{\langle}

\newcommand{\extcup}{\operatorname{\ol\cup}}

\newcommand{\ol}{\overline}
\newcommand{\pa}{\partial}
\newcommand{\dd}{{\mathrm d}}
\newcommand{\ra}{\rangle}

\newcommand{\specb}{\operatorname{spec}_\bop}
\newcommand{\injspecb}{\operatorname{inj-spec}_\bop}
\newcommand{\surjspecb}{\operatorname{surj-spec}_\bop}
\newcommand{\Specb}{\operatorname{Spec}_\bop}
\newcommand{\injSpecb}{\operatorname{inj-Spec}_\bop}
\newcommand{\surjSpecb}{\operatorname{surj-Spec}_\bop}

\newcommand{\ul}[1]{\underline{#1}{}}

\newcommand{\wh}{\widehat}
\newcommand{\wt}{\widetilde}

\newcommand{\ubar}[1]{\underaccent{\bar}#1}%{\mkern1.5mu\underline{\mkern-1.5mu #1\mkern-1.5mu}\mkern1.5mu}

% microlocal calculi
\newcommand{\bop}{{\mathrm{b}}}

\newcommand{\scop}{{\mathrm{sc}}}

% boundary hypersurfaces

\newcommand{\cp}{{\mathrm{c}}}

% operator classes
\newcommand{\Diff}{\mathrm{Diff}}

\newcommand{\Diffb}{\Diff_\bop}

% wave front sets, bundles

\newcommand{\Tb}{{}^{\bop}T}

\newcommand{\Tsc}{{}^\scop T}

\newcommand{\Sb}{{}^{\bop}S}

% symbolic stuff
\newcommand{\sigmab}{{}^\bop\upsigma}

% function spaces
\newcommand{\loc}{{\mathrm{loc}}}
\newcommand{\CI}{\cC^\infty}
\newcommand{\CIdot}{\dot\cC^\infty}

\newcommand{\CIc}{\cC^\infty_\cp}

\newcommand{\CmI}{\cC^{-\infty}}

\newcommand{\Hb}{H_{\bop}}

% notation for spaces
\newcommand{\phg}{{\mathrm{phg}}}

% geometry

% for matrices which wouldn't fit the page without changing the column separation
\newcommand{\openbigpmatrix}[1]
  {%
    \def\@bigpmatrixsize{#1}%
    \addtolength{\arraycolsep}{-#1}%
    \begin{pmatrix}%
  }
\newcommand{\closebigpmatrix}
  {%
    \end{pmatrix}%
    \addtolength{\arraycolsep}{\@bigpmatrixsize}%
  }

% for matrices which are *really* big
% source: https://tex.stackexchange.com/questions/228183/why-everymath-scriptsize-works-with-matrix-environment-only

% line separation for arrays

% enumeration layout and counters

\newlength{\enummargin}\setlength{\enummargin}{1.5em}

% upshape commands
\newcommand{\usref}[1]{{\upshape\ref{#1}}}

% various wave type operators

% highlight

% graphicx stuff
\DeclareGraphicsExtensions{.mps}

% for forward/backward enumerations
% source: https://tex.stackexchange.com/questions/29850/something-like-enumerate-but-with-custom-numbers-at-each-item
\makeatletter
\newcommand*{\fwbw}[1]{\expandafter\@fwbw\csname c@#1\endcsname}
\newcommand*{\@fwbw}[1]{\ifcase #1 \or {\rm fw}\or {\rm bw}\fi}
\AddEnumerateCounter{\fwbw}{\@fwbw}
\makeatother

\begin{document}

%%%%%%%%%%%%%%%%%%%%%%%%%%%%%%%%%%%%%%%%%%%%%%%%%%%%%%%%%%%%%%%%%%%%%%
% title page
\title[Underdetermined-elliptic PDE]{Underdetermined-elliptic PDE on asymptotically Euclidean manifolds, and generalizations}

\date{\today}

\begin{abstract}
  We study underdetermined-elliptic linear partial differential operators $P$ on asymptotically Euclidean manifolds, such as the divergence operator on 1-forms or symmetric 2-tensors. Suitably interpreted, these are instances of (weighted) totally characteristic differential operators on a compact manifold with boundary whose principal symbols are surjective but not injective. We study the equation $P u=f$ when $f$ has a generalized Taylor expansion at $r=\infty$, that is, a full asymptotic expansion into terms with radial dependence $r^{-i z}(\log r)^k$ with $(z,k)\in\C\times\N_0$ up to rapidly decaying remainders. We construct a solution $u$ whose asymptotic behavior at $r=\infty$ is optimal in that the index set of exponents $(z,k)$ arising in its asymptotic expansion is as small as possible. On the flipside, we show that there is an infinite-dimensional nullspace of $P$ consisting of smooth tensors whose expansions at $r=\infty$ contain nonzero terms $r^{-i z}(\log r)^k$ for any desired index set of $(z,k)\in\C\times\N_0$.

  Applications include sharp solvability results for the divergence equation on 1-forms or symmetric 2-tensors on asymptotically Euclidean spaces, as well as a regularity improvement in a gluing construction for the constraint equations in general relativity recently introduced by the author.
\end{abstract}

% 35C20: asymptotic expansions
% 35B40: asymptotic behavior of solutions
\subjclass[2010]{Primary 35C20, Secondary 35B40}

\author{Peter Hintz}
\address{Department of Mathematics, ETH Z\"urich, R\"amistrasse 101, 8092 Z\"urich, Switzerland}
\email{peter.hintz@math.ethz.ch}

\maketitle

%\tableofcontents

%%%%%%%%%%%%%%%%%%%%%%%%%%%%%%%%%%%%%%%%%%%%%%%%%%%%%%%%%%%%%%%%%%%%%%
\section{Introduction}
\label{SI}

Underdetermined-elliptic partial differential equations (PDE) arise frequently in geometric analysis, and their solutions are related to deformations of geometric structures; see for example \cite{BourguignonEbinMarsdenPsdoSurj} for some early applications on compact Riemannian manifolds, \cite{CorvinoScalar} for applications to gluing problems for the constraint equations in general relativity, and \cite{DelayCompact} for general results related to (linear) gluing problems. The purpose of the present paper is to study a class of underdetermined PDE on noncompact manifolds and the behavior of its solutions at infinity. The following result serves as an illustration:

\begin{thm}[Divergence on symmetric 2-tensors]
\label{ThmI}
  Let $g$ be the Euclidean metric on $\R^n$, $n\geq 2$. Given a symmetric 2-tensor $h$ on $\R^n$, write $(\delta_g h)_i=-\sum_{j=1}^n\pa_j h_{i j}$ for its (negative) divergence.
  \begin{enumerate}
  \item\label{ItISolv}{\rm (Sharp solvability.)} Let $\omega$ be a 1-form on $\R^n$ with rapid decay, that is, $\omega_i\in\sS(\R^n)$. Then there exists a smooth symmetric 2-tensor $h$ with $\delta_g h=\omega$ and so that in $|x|>1$ we can write $h_{i j}=|x|^{-(n-1)}f_{i j}(|x|^{-1},\frac{x}{|x|})$ for $f_{i j}\in\CI([0,1)\times\Sph^{n-1})$.
  \item\label{ItIker}{\rm (Infinite-dimensional nullspace.)} For any $\alpha<\beta$, there exist infinitely many linearly independent solutions of the homogeneous equation $\delta_g h=0$ so that $h_{i j}\in S^\alpha\setminus S^\beta$ where $S^\alpha(\R^n)$ is the standard symbol class on $\R^n$; that is, $|\pa_x^\gamma h_{i j}(x)|\lesssim\la x\ra^{\alpha-|\gamma|}$ for all $\gamma$, but these bounds fail for $\beta$ in place of $\alpha$. One can find $h$ of this type which moreover have a full generalized Taylor expansion at $|x|^{-1}=0$ with a nontrivial leading order term $\sim|x|^{-i z}(\log|x|)^k$ for any fixed $(z,k)\in\C\times\N_0$.
  \item\label{ItIGen}{\rm (Geometric generalization.)} Suppose that $g$ is an asymptotically Euclidean metric in the sense that $g_{i j}-\delta_{i j}=\la x\ra^{-1}\tilde g_{i j}$ with $\tilde g_{i j}$ smooth in $x$, and for $|x|>1$ also in $|x|^{-1}$, $\frac{x}{|x|}$. Then the same conclusions hold, except now we can typically only find a solution of $\delta_g h=\omega\in\sS$ which has an additional logarithmic subleading term; that is, we can find a smooth solution $h$ of the form $h=h_1+h_2$ where $h_{1,i j}\in|x|^{-n-1}\CI$ and $h_{2,i j}\in|x|^{-n}(\log|x|)\CI$ in $|x|>1$.
  \end{enumerate}
\end{thm}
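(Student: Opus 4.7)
The plan is to view $\delta_g$ as a weighted first-order elliptic underdetermined b-differential operator on the radial compactification $\ol{\R^n}$ (a compact manifold with boundary -- the sphere at infinity $\{\rho=0\}$, $\rho=|x|^{-1}$) and to invoke the paper's general sharp solvability and kernel theorems. Near the boundary, Euclidean derivatives $\pa_j$ lift to $\rho\cdot\Vb$, so $\delta_g$ lies in $\rho\cdot\Diffb^1$ acting between appropriate b-vector bundles of symmetric 2-tensors and 1-forms. Its b-principal symbol at $\rho=0$ coincides with the Euclidean symbol $(h_{i j})\mapsto(i\sum_j\xi_j h_{i j})$, which is surjective but has a kernel of dimension $n(n-1)/2$, placing the problem squarely in the framework of the paper.

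\textbf{Indicial analysis and proof of (\ref{ItISolv}).} The main computational input is the indicial family $\hat I(z)$ of the b-normalization of $\delta_g$: a family, parametrized by $z\in\C$, of first-order operators on $\Sph^{n-1}$ from symmetric 2-tensors to 1-forms. I would compute this explicitly by writing $\delta_g$ in polar coordinates and decomposing sections into normal/tangential and trace/traceless parts, then into spherical harmonic modes. The indicial roots -- values of $z$ at which $\hat I(z)$ fails to be surjective -- determine the admissible exponents in the asymptotic expansion of any solution. The conclusion of (\ref{ItISolv}) reduces to checking that the lowest indicial root contributing nontrivially for a Schwartz right-hand side corresponds to $|x|^{-(n-1)}$, with subsequent exponents proceeding in integer steps and no logarithmic corrections; the paper's sharp solvability theorem then produces the polyhomogeneous $h=|x|^{-(n-1)}f_{i j}(\rho,\theta)$ with $f_{i j}\in\CI$.

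\textbf{Infinite-dimensional nullspace (part (\ref{ItIker})).} To build $h\in\ker\delta_g$ with prescribed leading term $|x|^{-i z}(\log|x|)^k$ for arbitrary $(z,k)\in\C\times\N_0$, I would proceed in two steps. Since the principal symbol of $\delta_g$ has large kernel, $\ker\hat I(z)\neq0$ for every $z$; one can therefore solve the indicial transport equations iteratively to produce a formal power series solution of $\delta_g h=0$ at infinity with the prescribed leading asymptotic, and Borel summation promotes it to a smooth $h_\fw$ with $\delta_g h_\fw\in\sS(\R^n)$. Combining (\ref{ItISolv}) with the paper's kernel-adjusted correctors yields an $h_\res$ satisfying $\delta_g h_\res=\delta_g h_\fw$ whose asymptotic expansion avoids the term $|x|^{-i z}(\log|x|)^k$; then $h:=h_\fw-h_\res\in\ker\delta_g$ retains the chosen leading behavior. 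Varying $(z,k)$ and making independent choices within $\ker\hat I(z)$ yields infinitely many linearly independent kernel elements, and the symbol-class statement $h\in S^\alpha\setminus S^\beta$ follows by choosing $\Re(i z)=-\alpha$ and invoking the nonvanishing of the leading coefficient.

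\textbf{Asymptotically Euclidean case (\ref{ItIGen}) and main obstacle.} For AE $g$, the indicial family at $\rho=0$ coincides with the Euclidean one since $\tilde g_{i j}/|x|$ enters only subleadingly; however, this perturbation couples the leading asymptotic coefficient of $h$ to the next indicial root, creating a resonance that forces a logarithmic term at order $|x|^{-n}\log|x|$ -- the standard mechanism producing log terms in b-calculus expansions -- and yields the decomposition $h=h_1+h_2$ of the stated form. The main obstacle throughout is the explicit computation of the indicial family on the bundles of symmetric 2-tensors and 1-forms, careful identification of which indicial roots produce obstructions, and bookkeeping of the resonance responsible for the log correction in the AE case; once these are in hand, parts (\ref{ItISolv})--(\ref{ItIGen}) follow directly from the general theorems developed in the body of the paper.
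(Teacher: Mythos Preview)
Your strategy is essentially the paper's, but two points deserve attention.

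\textbf{Computing the indicial data.} You propose to compute the indicial family of $\delta_g$ directly via polar coordinates and spherical harmonic decomposition. The paper takes a much cleaner route: by definition the \emph{surjective} boundary spectrum of $\rho^{-1}\delta_g$ is governed by the kernel of the normal operator of the \emph{adjoint} $\delta_g^*$, which is the symmetric gradient (Killing operator). Its kernel on Euclidean space is classically known---translations $\dd x^j$ (homogeneous of degree $0$) and rotations $x^i\,\dd x^j-x^j\,\dd x^i$ (homogeneous of degree $-1$)---so $\surjSpecb(\rho^{-1}\delta_g)=\{(n-1,0),(n,0)\}$ with no further computation. Your direct approach would work but is unnecessarily laborious.

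\textbf{A gap in part~\eqref{ItISolv}.} You assert that in the Euclidean case the expansion has ``no logarithmic corrections,'' but this is not automatic. Since there are \emph{two} adjacent roots $n-1$ and $n$ in $\surjSpecb$, the general sharp solvability theorem (the index set~\eqref{EqAsSchwInd}) produces a term at order $\rho^n\log\rho$: iterating from the $(n-1,0)$ term hits the root at $n$ and picks up a logarithm. This is exactly why part~\eqref{ItIGen} has the log term. To kill it in the exact Euclidean case one needs an additional argument (Remark~\ref{RmkAGdivEucl1}): by spherical symmetry, the leading $(n-1)$-term can be taken of scalar type~$1$, and then the error it produces at the next order is orthogonal to the vector type~$1$ rotation Killing fields that constitute $\ker N(\delta_g^*)$ at the root $n$, so no log is generated. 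Your proposal does not supply this step, and without it the argument for~\eqref{ItISolv} only yields the weaker conclusion of~\eqref{ItIGen}.
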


See Theorem~\ref{ThmAGdiv2} for part~\eqref{ItIGen}, Remark~\ref{RmkAGdivEucl1} for its strengthening in part~\eqref{ItISolv} (which can also be proved directly using the Fourier transform), and Theorems~\ref{ThmAsIndetCon} and \ref{ThmAsIndetPhg} for part~\eqref{ItIker}. Our main result, Theorem~\ref{ThmAs}, which substantially generalizes part~\eqref{ItIGen} in a manner described around~\eqref{EqIEucl} below, also describes the asymptotic behavior of optimal solutions when the right hand side itself has an asymptotic expansion at infinity. Our methods for the construction of solutions with optimal asymptotics apply verbatim also to elliptic equations (such as the Laplacian on tensors), except of course part~\eqref{ItIker} is then no longer valid.

It is a classical result \cite[Theorem~1]{BourguignonEbinMarsdenPsdoSurj} (see also \cite{ChodoshLinStab} for an exposition) that for a differential operator $P\in\Diff^m(U;E,F)$, defined on an open set $U$ and acting between sections of the vector bundles $E$ and $F$, the nullspace of $P$ on $\CIc(U;E)$ is infinite-dimensional if $P$ is \emph{underdetermined-elliptic}, i.e.\ if the principal symbol of $P$ is surjective but not injective everywhere. This applies to the divergence operator in Theorem~\ref{ThmI}. Part~\eqref{ItIker} moreover shows that even if one insists on lower and upper bounds on the decay at infinity, or even on the existence of a nontrivial asymptotic expansion, the nullspace remains infinite-dimensional.

The main focus of the present paper is on showing the existence of solutions of un\-der\-de\-ter\-mined-elliptic PDE $P u=f$ on a manifold $M^\circ$ with optimal asymptotics at infinity, as in parts~\eqref{ItISolv} and \eqref{ItIGen} of Theorem~\ref{ThmI}. The standard approach for studying such PDE is a $P P^*$ argument: one seeks $u$ of the form $P^*v$ where $v$ solves the \emph{elliptic} equation $P P^*v=f$. As an explicit demonstration that this method is inadequate for our aims, consider the divergence equation for 1-forms on $\R^3$,
\begin{equation}
\label{EqIdiv}
  \delta\omega = u \in \sS(\R^3).
\end{equation}
If one were to solve this using a $P P^*$ argument, one would first solve $\delta\delta^*v=\delta\dd v=\Delta v=u$; a solution is given by $v=(4\pi|x|^{-1})*u=Y_0|x|^{-1}+Y_1(\frac{x}{|x|})|x|^{-2}+Y_2(\frac{x}{|x|})|x|^{-3}+\ldots$, where $Y_\ell$, $\ell\in\N_0$, is a (typically nonzero) spherical harmonic of degree $\ell$. Therefore the solution $\omega=\dd v$ of~\eqref{EqIdiv} obtained in this manner satisfies $\omega_i\in|x|^{-2}\CI([0,1)_{|x|^{-1}}\times\Sph^{n-1}_{x/|x|})$ (which happens to be optimal), or $\omega_i\in|x|^{-3}\CI$ when $u$ is orthogonal to constants. In the latter case, however, the optimal solution is Schwartz. (This can be seen by the following elementary argument: since $\int_{\R^3}u(x)\,\dd x=\hat u(0)=0$, one can write $\hat u(\xi)=\sum_{j=1}^3\xi^j\wh{u_j}(\xi)$ for some $\wh{u_j}\in\sS(\R^3)$, and therefore $u=\sum_{j=1}^3 D_{x^j}u_j=\delta_g\omega$ where $\omega_j=i u_j\in\sS(\R^3)$.) As a simple application of our results, one can find a Schwartz solution of~\eqref{EqIdiv} when $u\perp 1$ also on \emph{asymptotically} Euclidean spaces; see Theorem~\ref{ThmAGdiv}.

\begin{rmk}[PDE on closed manifolds]
\label{RmkIClosed}
  If $M$ is a closed manifold (smooth, no boundary) and $P\in\Diff^m(M;E,F)$ is underdetermined-elliptic, then $\ker_{\CI(M)}P$ is infinite-dimensional, and $P(\CI(M))\subset\CI(M)$ is closed, of finite codimension, and (upon fixing densities and fiber inner products) equal to the $L^2$-orthogonal complement of $\ker_{\CI(M)}P^*$. (This can be proved using a $P P^*$ argument \cite{BergerEbinTensors}.) The issue of sharp asymptotics studied in the present paper of course does not arise in this setting.
\end{rmk}

If $w>0$ is a weight function on $M^\circ$, such as $w=\la x\ra^{-\alpha}$ on $\R^n$, one can use the $P P^*$ method for the conjugated operator $w^{-1}P w$, i.e.\ one solves $w^{-1}P w^2 P^* w^{-1}v=w^{-1}f$ and sets $u=w^2 P^* w^{-1}v$. In the case $P=\delta_g$ considered above, this does produce solutions $u$ with any desired amount of decay $\la x\ra^{-\alpha}$ relative to $L^2$ when $u\perp 1$, and indeed with full asymptotic expansions into terms $\sim|x|^{-i z}(\log|x|)^k$ where the exponents $(z,k)\in\C\times\N_0$ depend on the weight $\alpha$; but for no (polynomial) weight does this method produce a solution with optimal (Schwartz) asymptotics.

In situations more complicated than~\eqref{EqIdiv}, the asymptotics produced by a (weighted) $P P^*$ method are weaker even at leading order than the optimal solution, whether or not the right hand side is orthogonal to (part of) the cokernel or not. This happens for example in the study of the linearized constraints map on asymptotically flat initial data sets in general relativity; see the discussion of \cite[Propositions~4.10, 4.14]{HintzGlueID}. Optimizing the asymptotic behavior of solutions of the linearized constraints was the author's original motivation for the present work; see~\S\ref{SsAI}.

\medskip

We shall prove that the optimal asymptotic behavior can essentially be read off from the allowed asymptotic behavior of (approximate) elements of the nullspace of $P^*$: when $P$ is the divergence on 1-forms, the nullspace of $P^*$ consists only of constants; and when $P$ is the divergence on symmetric 2-tensors, then the kernel of $P^*$ consists of the Killing 1-forms on Euclidean space, i.e.\ it is spanned by the generators of translations ($\dd x^j$) and rotations ($x^i\,\dd x^j-x^j\,\dd x^i$). The asymptotic expansions of optimal solutions of $P u=f$ then have (at worst) terms $\sim|x|^{-i z}(\log|x|)^k$ whose behavior at infinity just barely forbids integration by parts against elements of this approximate cokernel; this is the origin of the $|x|^{-(n-1)}$ asymptotics in Theorem~\ref{ThmI}\eqref{ItISolv}.

The proof strategy is thus to first find a formal solution of $P u=f$ near infinity, i.e.\ $u_0$ with an optimal asymptotic expansion so that $f-P u_0$ is Schwartz. (See Proposition~\ref{PropAs}.) Solving away this Schwartz error then first requires adding to $u_0$ a suitable tensor $u_1$ (with optimal asymptotics) so that $f-P u_0-P u_1$ is orthogonal to the tempered distributional kernel of $P^*$. (See Proposition~\ref{PropAsSchw}.) Solving away the remaining Schwartz error is effected using the mapping properties of $P$ on Schwartz spaces (Corollary~\ref{CorbSchwartz}) which is obtained as a simple application of a well-known functional analytic result (reproduced in Appendix~\S\ref{SCl}) but which may nonetheless be of independent interest.

\medskip

The general setting into which we embed the analysis of geometric operators on asymptotically Euclidean spaces is that of \emph{totally characteristic operators} or \emph{b-differential operators} in the parlance of \cite{MelroseMendozaB,MelroseAPS}. These can be regarded as differential operators $P$ on noncompact manifolds $M^\circ$ with a specific structure near infinity: if $M^\circ$ is the interior of a compact manifold $M$ with boundary (the `boundary at infinity'), then the building blocks of b-differential operators are the vector fields on $M$ which are tangent to $\pa M$. See~\S\ref{Sb} for details. In the analysis of elliptic b-differential operators $P$, the \emph{boundary spectrum} $\Specb(P)\subset\C\times\N_0$ plays a key role in determining the asymptotic behavior of solutions. By contrast, in the underdetermined-elliptic case, we need to introduce the finer \emph{surjective boundary spectrum} $\surjSpecb(P)$ for this purpose (which is typically much smaller than the boundary spectrum of $P P^*$ or of weighted versions thereof); this captures the allowed asymptotics of elements of the (approximate) nullspace of $P^*$. The proofs of our main results require substantial revisions of various arguments especially from \cite[\S6]{MelroseAPS} so as to handle the asymmetry between $P$ and $P^*$ in underdetermined-elliptic settings.

Theorem~\ref{ThmI} is an instance of this general b-perspective in the following way. The relevant manifold with boundary is the \emph{radial compactification} of $\R^n$, defined as
\begin{equation}
\label{EqIEucl}
  \ol{\R^n} := \bigl(\R^n \sqcup [0,\infty)_\rho\times\Sph^{n-1}_\omega \bigr) / \sim,\qquad 0\neq x=r\omega\sim(\rho,\omega)=(r^{-1},\omega),
\end{equation}
where $r=|x|$, $\omega=\frac{x}{|x|}$; the boundary is the sphere at infinity $\rho^{-1}(0)$. The function $\la x\ra^{-1}=(1+|x|^2)^{-1/2}$ is then smooth on $\ol{\R^n}$ and vanishes simply at the boundary. Expressing tensors on $\R^n$ in the standard frames $\pa_{x^1},\ldots,\pa_{x^n}$, $\dd x^1,\ldots,\dd x^n$, one then finds that $P:=\la x\ra\delta_g$ is a matrix of b-differential operators on $\ol{\R^n}$ (with smooth coefficients) when $g$ is as in Theorem~\ref{ThmI}. Upon computing $\surjSpecb(P)$, one can then deduce Theorem~\ref{ThmI} from Theorem~\ref{ThmAs}. The details are given in~\S\ref{SsAG}.

\medskip

The plan of the paper is as follows. In~\S\ref{Sb}, we recall elements of the analysis of totally characteristic differential operators and introduce some novel notions tailored to underdetermined-elliptic settings. In~\S\ref{SAs}, we prove our main results (Theorems~\ref{ThmAs}, \ref{ThmAsIndetCon}, and \ref{ThmAsIndetPhg}). The aforementioned applications are discussed in~\S\ref{SA}. We have made an effort to keep the paper self-contained (with the exception of~\S\ref{SsAI} and the usage of basic results from elliptic PDE theory on $\R^n$). In particular, familiarity with b-analysis is \emph{not} assumed.

%%%%%%%%%%%%%%%%%%%%%%%%%%%%%%%%%%%%%%%%%%%%%%%%%%
\subsection*{Acknowledgments}

I would like to thank Hans Lindblad for letting me peek into \cite{HormanderFA} from where I first learned about Theorem~\ref{ThmCl}, and Jure Kali\v{s}nik for discussions regarding Problem~\ref{ProbARI}. Many thanks are also due to a referee for many helpful comments and suggestions.

%%%%%%%%%%%%%%%%%%%%%%%%%%%%%%%%%%%%%%%%%%%%%%%%%%%%%%%%%%%%%%%%%%%%%%
\section{Totally characteristic differential operators}
\label{Sb}

The material in this section is largely standard (see e.g.\ \cite{MelroseAPS,GrieserBasics}), except for the second part of Definition~\ref{DefbSpecb} and the part starting with Corollary~\ref{CorbSchwartz} and ending with Proposition~\ref{PropACI}. The discussion will thus be relatively brief, but we indicate most proofs in order to make the paper self-contained.

Let $M$ be a compact manifold of dimension $n\in\N$ with non-empty boundary $\pa M$. We assume that $\pa M$ is an embedded submanifold, which ensures the existence of a product neighborhood of $\pa M$ in $M$ (see~\eqref{EqbCollar}). For simplicity of exposition, we shall assume that $\pa M$ is connected (and thus necessarily $n\geq 2$); we leave the minor (largely notational) modifications required to handle the case of disconnected $\pa M$ to the interested reader. We write $M^\circ=M\setminus\pa M$ for the manifold interior of $M$, and we identify
\begin{equation}
\label{EqbCollar}
  [0,1)_\rho \times \pa M
\end{equation}
with a collar neighborhood $\cU\subset M$ of $\pa M$. Without loss of generality, we may assume that $\rho$ is the restriction to $\cU$ of a smooth function on $M$ which vanishes only at $\pa M$. Local coordinates on $\pa M$ will be denoted $y\in\R^{n-1}$. Denote by $E,F\to M$ two vector bundles and fix bundle isomorphisms $E|_\cU\cong\pi^*(E|_{\pa M})$, $F|_\cU\cong\pi^*(F|_{\pa M})$, where $\pi\colon\cU=[0,1)\times\pa M\to\pa M$ is the projection to the second factor. We moreover assume that $E,F$ come equipped with non-degenerate Hermitian fiber inner products. We fix a smooth positive density $\mu$ on $M^\circ$ with the property that there exists
\[
  w\in\R
\]
so that $\rho^{-w+1}\mu$ is a smooth positive density on $\cU$; equivalently, $\mu=\rho^w|\frac{\dd\rho}{\rho}\nu|$ where $0<\nu\in\CI([0,1);\CI(\pa M;\Omega\pa M))$ is a density on $\pa M$ which depends smoothly on $\rho$. We call $w$ the \emph{weight} of this density.\footnote{In asymptotically Euclidean settings in $n$ dimensions, the metric density is of this form for $w=-n$; see~\S\ref{SsAG}.} Adjoints of operators on $M$ acting between spaces of sections of $E$ and $F$ are defined using these fiber inner products and the density $\mu$.

\begin{definition}[Totally characteristic operators]
\label{Defb}
  Let $m\in\N_0$. An $m$-th order \emph{totally characteristic differential operator} (or \emph{b-differential operator}) $P$ on $M$ acting between sections of $E$ and $F$ is an $m$-th order differential operator on $M^\circ$, acting between sections of $E|_{M^\circ}$ and $F|_{M^\circ}$, with smooth coefficients (i.e.\ $P\in\Diff^m(M^\circ;E,F)$) so that in $\cU$ we can write
  \[
    P = \sum_{j=0}^m P_j(\rho)(\rho D_\rho)^j,\qquad P_j\in\CI\bigl([0,1)_\rho;\Diff^{m-j}(\pa M;E|_{\pa M},F|_{\pa M})\bigr),
  \]
  where $D=i^{-1}\pa$. The space of such operators is denoted $\Diffb^m(M;E,F)$. The \emph{normal operator} of such a $P\in\Diffb^m(M;E,F)$ is defined as
  \[
    N(P) := \sum_{j=0}^m P_j(0)(\rho D_\rho)^j \in \Diffb^m([0,\infty)_\rho\times\pa M;\pi^*E|_{\pa M},\pi^*F|_{\pa M}),
  \]
  and the \emph{Mellin-transformed normal operator family} is
  \[
    N(P,z) := \sum_{j=0}^m P_j(0)z^j \in \Diff^m(\pa M;E|_{\pa M},F|_{\pa M}),\qquad z\in\C.
  \]
\end{definition}

Note that $N(P)$ is invariant under dilations $(\rho,y)\mapsto(a\rho,y)$, $a>0$. Moreover, $P-N(P)\in\rho\Diffb^m$ on $\cU$. Thus, elements $u\in\ker N(P,z)$ give rise to elements $\rho^{i z}u$ in the kernel of $N(P)$, and thus (upon cutting them off to a neighborhood of $\pa M$) in the approximate kernel of $P$.

\begin{definition}[Generalized boundary data]
\label{DefbSpecb}
  Let $P\in\Diffb^m(M;E,F)$. The \emph{generalized boundary data} of $P$ are the spaces
    \[
      F(P,z) := \left\{ u = \sum_{k=0}^j \rho^{i z}(\log\rho)^k u_k \colon j\in\N_0,\ u_k\in\CI(\pa M;E|_{\pa M}),\ N(P)u=0 \right\}
    \]
    for $z\in\C$. If $j$ is the largest possible exponent of $\log\rho$ with nonzero coefficient $u_j$ among all elements $u\in F(P,z)$, then we define the \emph{order} of $z$ to be $\ord(P,z):=j+1$; and the \emph{rank} of $z$ is $\rank(P,z):=\dim F(P,z)$. (If $F(P,z)=\{0\}$, we set $\ord(P,z)=0$.) If $\ord(P,z)<\infty$, resp.\ $\ord(P^*,\bar z+i w)<\infty$ for all $z\in\C$, we define the \emph{injective}, resp.\ the \emph{surjective boundary spectrum} of $P$ by\footnote{The surjective boundary spectrum is independent of the choice of volume density: if one passes to a volume density with a different weight $w'$, then adjoints of differential operators are changed via conjugation by $\rho^{w-w'}$ accounting for the difference in weights---which is exactly balanced by the change of the argument of $\ord(P^*,\cdots)$ from $\bar z+i w$ to $\bar z+i w'$.}
    \begin{align*}
      \injSpecb(P) &= \{ (i z,k) \in \C\times\N_0 \colon F(P,z)\neq 0,\ k=\ord(P,z)-1 \}, \\
      \surjSpecb(P) &= \{ (i z,k) \in \C\times\N_0 \colon F(P^*,\bar z+i w)\neq 0,\ k=\ord(P^*,\bar z+i w)-1 \}.
    \end{align*}
    We set $\injspecb(P)=\pi_1(\injSpecb(P))$ and $\surjspecb(P)=\pi_1(\surjSpecb(P))$ where $\pi_1\colon\C\times\N_0\to\C$ is the projection.
\end{definition}

In local coordinates $y\in\R^{n-1}$ on $\pa M$, we can write
\[
  P = \sum_{j=0}^m \sum_{|\alpha|\leq m-j} p_{j\alpha}(\rho,y)(\rho D_\rho)^j D_y^\alpha
\]
for coefficients $p_{j\alpha}$ which are smooth bundle maps from $E$ to $F$. The \emph{b-principal symbol} of $P$ in these coordinates is then the homogeneous polynomial
\begin{equation}
\label{EqbEllSymb}
  \sigmab^m(P)(\rho,y;\xi,\eta) := \sum_{j+|\alpha|=m} p_{j\alpha}(\rho,y)\xi^j\eta^\alpha,\qquad (0,0)\neq (\xi,\eta)\in\R\times\R^{n-1},
\end{equation}
with values in $\Hom(E_{(\rho,y)},F_{(\rho,y)})$. In local coordinates $x\in\R^n$ in $M^\circ$, the b-principal symbol of $P=\sum_{|\beta|\leq m} p_\beta(x)D_x^\beta$ is defined like the ordinary principal symbol, so $\sigmab^m(P)(x;\xi)=\sigma^m(P)(x;\xi)=\sum_{|\beta|=m} p_\beta(x)\xi^\beta\in\Hom(E_x,F_x)$ for $0\neq\xi\in\R^n$. We say that $P$ is
\begin{enumerate}
\item \emph{left elliptic} if its principal symbol is injective, i.e.\ $\sigmab^m(P)(x;\xi)\colon E_x\to F_x$, $\xi\neq 0$, and $\sigmab^m(P)(\rho,y;\xi,\eta)\colon E_{(\rho,y)}\to F_{(\rho,y)}$, $(\xi,\eta)\neq(0,0)$, are injective for all $x,\xi$ and $\rho,y,\xi,\eta$;
\item \emph{right elliptic} if its principal symbol is surjective;
\item \emph{elliptic} if its principal symbol is invertible.
\end{enumerate}
If $P$ is left but not right elliptic, one calls $P$ \emph{overdetermined-elliptic}, and if $P$ is right but not left elliptic, one calls $P$ \emph{underdetermined-elliptic}. Note that if $P$ is underdetermined-elliptic, then necessarily $\rank E>\rank F$.

\begin{lemma}[Consequences of ellipticity]
\label{LemmabEll}
  Let $P\in\Diffb^m(M;E,F)$.
  \begin{enumerate}
  \item\label{ItbEllInj} If $P$ has injective principal symbol, then $\injSpecb(P)$ is well-defined (i.e.\ $ord(P,z)<\infty$ for all $z\in\C$) and discrete, and its intersection with a strip $\{z\in\C\colon |\Im z|<C\}$ is a finite set for all $C$.
  \item\label{ItbEllSurj} If $P$ has surjective principal symbol, then the conclusions of part~\eqref{ItbEllInj} hold for $\surjSpecb(P)$ in place of $\injSpecb(P)$.
  \item\label{ItbEll} $P$ is elliptic, i.e.\ has invertible principal symbol, then we have $\injSpecb(P)=\surjSpecb(P)=:\Specb(P)$.
  \end{enumerate}
\end{lemma}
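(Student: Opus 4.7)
The plan is to reduce all three parts to properties of the entire holomorphic operator pencil $N(P,\cdot)\colon\C\to\Diff^m(\pa M;E|_{\pa M},F|_{\pa M})$, which is polynomial of degree $m$ in $z$.

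For part~\eqref{ItbEllInj}, I would first note that the injectivity of $\sigmab^m(P)(0,y;\xi,\eta)$ for $(\xi,\eta)\neq 0$ includes, upon setting $\xi=0$, the injectivity of $\sigma^m(N(P,z))(y;\eta)=\sigmab^m(P)(0,y;0,\eta)$ for $\eta\neq 0$; hence $N(P,z)$ is elliptic (with injective principal symbol) on the closed manifold $\pa M$ for every $z$, so $\ker N(P,z)\subset\CI(\pa M;E|_{\pa M})$ is finite-dimensional. The equation $N(P)u=0$ for a polyhomogeneous sum $u=\sum_{j=0}^q\rho^{iz}(\log\rho)^j u_j$ translates into a triangular system involving $\pa_z^k N(P,z)$ applied to the $u_j$, whose solutions constitute the generalized nullspace of the pencil at $z$; standard pencil theory together with the finite-dimensionality of $\ker N(P,z)$ then gives $\ord(P,z),\rank(P,z)<\infty$, so $\injSpecb(P)$ is well defined.

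For discreteness and finiteness in strips I would exploit the full $(\xi,\eta)$-injectivity via a semiclassical rescaling $h:=|z|^{-1}$, $\zeta:=z/|z|$: the semiclassical principal symbol of $h^m N(P,z)$ at $(y;\eta)$ computes to $\sigmab^m(P)(0,y;\zeta,\eta)$, which is injective for $|\zeta|=1$ and all $\eta$. A semiclassical elliptic parametrix construction then yields a left inverse of $N(P,z)$ on $H^m(\pa M;E|_{\pa M})\to L^2(\pa M;F|_{\pa M})$ for $|z|$ large in any strip $|\Im z|\leq C$. Combined with analytic Fredholm theory---applied to the family $I-N(R,z)$ obtained by composing $N(P,z)$ on the left with the Mellin-transformed normal operator of a b-pseudodifferential left parametrix for $P$, so that $N(R,z)$ is a holomorphic family of compact operators---this forces $\{z\in\C:\ker N(P,z)\neq 0\}\subset\{z:I-N(R,z)\text{ not invertible}\}$ to be discrete and finite in every strip.

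Part~\eqref{ItbEllSurj} is then immediate from part~\eqref{ItbEllInj} applied to $P^*\in\Diffb^m(M;F,E)$, since surjectivity of $\sigmab^m(P)$ is equivalent to injectivity of $\sigmab^m(P^*)$ via the Hermitian fiber inner products, and $\surjSpecb(P)$ is by definition encoded in the orders of $P^*$. For part~\eqref{ItbEll}, invertibility of the b-principal symbol forces $\rank E=\rank F$ and makes $N(P,z)$ an elliptic Fredholm family of index zero on $\pa M$. A direct computation tracking the weight $w$ through the Mellin transform of the volume density yields the identity $N(P^*,z)=N(P,\bar z+iw)^*$, where the right-hand adjoint is on $\pa M$. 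Index-zero Fredholmness gives $\ker N(P,z)\neq 0\iff\ker N(P^*,\bar z+iw)\neq 0$, and the equality of full Jordan block sizes $\ord(P,z)=\ord(P^*,\bar z+iw)$ follows by duality for meromorphic Fredholm operator pencils (e.g.\ via Smith normal form for $N(P,z)$ near each pole of its inverse). This identifies $\injSpecb(P)$ with $\surjSpecb(P)$.

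The technical heart of the proof is the holomorphic Fredholm step in part~\eqref{ItbEllInj}: the classical analytic Fredholm theorem requires an index-zero Fredholm family, whereas the injective-symbol hypothesis only guarantees that $N(P,z)$ is left-Fredholm, and $\rank E\neq\rank F$ is permitted. The cleanest workaround is a b-pseudodifferential left parametrix for $P$ that converts $N(P,z)$ into the identity plus a holomorphic compact family; setting up even a bare-bones version of such a parametrix, adapted from \cite{MelroseAPS} to the underdetermined setting, is the main technical input.
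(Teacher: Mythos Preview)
Your proposal is correct, but the paper takes a substantially shorter route for parts~\eqref{ItbEllInj} and~\eqref{ItbEllSurj}. Instead of analyzing the left-Fredholm pencil $N(P,z)$ directly---which, as you correctly identify, forces you to build a b-pseudodifferential left parametrix so that analytic Fredholm theory applies---the paper simply reduces to the elliptic case: since $\ker N(P)\subset\ker N(P^*P)$, one has $F(P,z)\subset F(P^*P,z)$, and $P^*P$ is genuinely elliptic, so $\ord(P,z)\leq\ord(P^*P,z)<\infty$ and the discreteness and strip-finiteness of $\injSpecb(P)$ are inherited from $\Specb(P^*P)$. Part~\eqref{ItbEllSurj} follows identically using $P P^*$. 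This completely sidesteps the semi-Fredholm issue you flag as the technical heart, at the cost of no longer controlling $\injSpecb(P)$ directly in terms of the pencil $N(P,\cdot)$ alone. Your approach buys a more intrinsic description (everything is read off from $N(P,z)$ itself rather than from the auxiliary $N(P^*P,z)$), but requires importing or reconstructing a chunk of the b-pseudodifferential calculus that the paper otherwise avoids.

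For part~\eqref{ItbEll} the two arguments are essentially the same: both compute $N(P,z)^*=N(P^*,\bar z+i w)$ and then identify $\ord(P,z_0)$ with the pole order of the meromorphic inverse $N(P,z)^{-1}$ at $z_0$. The paper phrases this via the residue characterization
\[
  F(P,z_0)=\bigl\{\Res_{z=z_0}N(P,z)^{-1}f(z):f\ \text{polynomial in }z\text{ with values in }\CI(\pa M;F|_{\pa M})\bigr\},
\]
which makes the equality $\ord(P,z_0)=\ord(P^*,\bar z_0+i w)$ immediate from $(N(P,z)^{-1})^*=N(P^*,\bar z+i w)^{-1}$; your Smith-normal-form argument achieves the same thing.
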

\begin{proof}
  Part~\eqref{ItbEllInj} follows from part~\eqref{ItbEll} by considering the elliptic operator $P^*P$; note that $\ker P\subset\ker P^*P$, and therefore $F(P,z)\subset F(P^*P,z)$. Similarly, part~\eqref{ItbEllSurj} follows by considering the elliptic operator $P P^*$. Part~\eqref{ItbEll} finally is standard, see e.g.\ \cite[\S\S5.2--5.3]{MelroseAPS}; in brief, the ellipticity of $P$ implies that for $C>0$, the operator family $N(P,z)\in\Diff^m(\pa M;E|_{\pa M},F|_{\pa M})$ is elliptic with large parameter $z\in\C$ in the strip $|\Im z|<C$, and therefore it is invertible for $|\Re z|>C'$ with $C'$ depending on $C$; see e.g.\ \cite[Chapter II.9]{ShubinSpectralTheory}. The inverse $N(P,z)^{-1}$ is finite-meromorphic.

  For the equality $\injSpecb(P)=\surjSpecb(P)$, note that $N(P,z)^*=(\rho^{-i z}N(P)\rho^{i z})^*=\rho^{-i\bar z}N(P)^*\rho^{i\bar z}$, where the adjoint on the right is taken with respect to a dilation-invariant (thus weight $0$) density $[0,\infty)\times\pa M$ which equals $\rho^{-w}\mu$ to leading order at $\rho=0$. Passing to the weight $w$ density $\mu$ on $M$, this is equal to $\rho^{-i\bar z}\rho^w N(P^*)\rho^{-w}\rho^{i\bar z}=N(P^*,\bar z+i w)$. The claim then follows from the characterization
  \[
    F(P,z_0) = \left\{ \Res_{z=z_0} N(P,z)^{-1}f(z) \colon f\ \text{is a polynomial with values in}\ \CI(\pa M;F|_{\pa M}) \right\},
  \]
  which implies that $\ord(P,z_0)$ is the order of the pole of (the finite-meromorphic operator family) $N(P,z)^{-1}=(N(P^*,\bar z+i w)^{-1})^*$ at $z=z_0$.
\end{proof}

If $P\in\Diffb^m(M;E,F)$ has injective principal symbol, then for all $s,C\in\R$ there exist $C_s,C'$ so that
\begin{equation}
\label{EqbEllEst}
  \|u\|_{H_{\la z\ra^{-1}}^s(\pa M;E|_{\pa M})} \leq C_s\la z\ra^{-m}\|N(P,z)u\|_{H_{\la z\ra^{-1}}^{s-m}(\pa M;E|_{\pa M})},\qquad |\Im z|<C,\ |\Re z|>C'.
\end{equation}
The function space is defined via a partition of unity by means of the $\R^n$-version $H_h^s(\R^n)$, $h>0$, which is defined to be $H^s(\R^n)$ but with norm $\|u\|_{H_h^s(\R^n)}:=\|\la h\xi\ra^s\hat u(\xi)\|_{L^2_\xi}$; here, $\hat u(\xi)$ denotes the Fourier transform of $u$. Indeed, the estimate~\eqref{EqbEllEst} follows for elliptic $P$ from the ellipticity with large parameter of $N(P,z)$, which implies that $h^m N(P,z)$, with $h=\la z\ra^{-1}$, is an elliptic semiclassical operator on $\pa M$; see \cite[Theorem~E.33]{DyatlovZworskiBook} (applied with $A=B_1=\chi=1$ and $N(P,z)$ in place of $P$). If $\alpha\in\R$ is such that $N(P,\lambda)$ is invertible for all $\lambda\in\C$ with $\Im\lambda=-\alpha$, then the estimate~\eqref{EqbEllEst} holds for all $\Im\lambda=-\alpha$: for bounded $\lambda$, this estimate is a consequence of the ellipticity and injectivity of $N(P,\lambda)$. If $P$ merely has injective principal symbol, then~\eqref{EqbEllEst} follows by applying what we have already shown to the elliptic operator $P^*P$.

We study b-differential operators themselves on b-Sobolev spaces $\Hb^{s,\alpha}(M,\mu)$. Recall that $\mu$ is a smooth positive density on $M^\circ$ which near $\pa M$ is of the form $\mu=\rho^w|\frac{\dd\rho}{\rho}\nu|$ where $0<\nu\in\CI([0,1);\CI(\pa M;\Omega\pa M))$. The convention for the weight $\alpha$ in the following definition ensures that $\Hb^{0,0}(M,\mu)=L^2(M,\mu)$; the reader may wish to consider the case $w=0$ (i.e.\ $\mu$ is an \emph{unweighted b-density}) first.

\begin{definition}[Function spaces]
\fakephantomsection
\label{DefbFn}
  \begin{enumerate}
  \item We write $\CIdot(M)\subset\CI(M)$ for the Schwartz space on $M$, i.e.\ the space of all smooth functions vanishing to infinite order at $\pa M$.
  \item Elements of the dual space $\CmI(M)=\CIdot(M)^*$ are \emph{tempered distributions} on $M$.
  \item Let $\chi\in\CIc([0,1)\times\pa M)$ be equal to $1$ near $\{0\}\times\pa M$. Define for $s\in\R$ the space $\Hb^{s,w/2}(M,\mu)$ as the space of all $u\in\CmI(M)$ so that $(1-\chi)u\in H^s_\loc(M^\circ)$, and so that all localizations of $\chi u$ to products of $[0,1)_\rho$ with local coordinate charts on $\pa M$ using a partition of unity lie in $\Hb^s([0,\infty)_\rho\times\R^{n-1})$ which we define to be the pullback of $H^s(\R\times\R^{n-1})$ under the map
  \begin{equation}
  \label{EqbMap}
    (\rho,y)\mapsto(x,y):=(-\log\rho,y).
  \end{equation}
  \item We further let $\Hb^{s,\alpha}(M,\mu)=\rho^{\alpha-w/2}\Hb^{s,w/2}(M,\mu)=\{\rho^{\alpha-w/2}u\colon u\in\Hb^{s,w/2}(M,\mu)\}$ for $\alpha\in\R$. Spaces $\Hb^{s,\alpha}(M,\mu;E)$ of sections of $E$ are defined via a partition of unity and local trivializations of $E$. Finally, we write $\Hb^s(M,\mu;E)=\Hb^{s,0}(M,\mu;E)$.
  \end{enumerate}
\end{definition}

This definition ensures that $\Hb^0(M,\mu)=L^2(M,\mu)$; note indeed that the preimage of $|\dd x\,\dd y|$ is $|\frac{\dd\rho}{\rho}\dd y|$, so the two spaces agree when $\mu$ has weight $0$, and the general case follows from this. For $k\in\N_0$, the space $\Hb^k(M,\mu)$ consists of all $u\in\Hb^0(M,\mu)$ so that $P u\in\Hb^0(M,\mu)$ for all $P\in\Diffb^m(M)$. Furthermore, since $D_x=-\rho D_\rho$, and since the coefficients of $P\in\Diffb^m(M)$ in local coordinates near $\pa M$ push forward to uniformly bounded (with all derivatives) smooth functions on $\R\times\R^{n-1}$, we have $P\colon\Hb^{s,\alpha}(M,\mu;E)\to\Hb^{s-m,\alpha}(M,\mu;F)$ for $P\in\Diffb^m(M;E,F)$. One can endow $\Hb^{s,w/2}(M,\mu)$ with the structure of a Hilbert space by taking as a squared norm the sum of the squares of $H^s$-norms in a finite system of local coordinates on $\supp(1-\chi)\subset M^\circ$ plus the squared $H^s$-norms of the pushforwards to $\R\times\R^{n-1}$ of the localizations to coordinate charts near the boundary. This induces a Hilbert space structure on $\Hb^{s,\alpha}(M,\mu)$ for all $\alpha\in\R$. The $L^2(M,\mu)$-inner product induces an isomorphism between $\Hb^{-s,-\alpha}(M,\mu)$ and the dual of $\Hb^{s,\alpha}(M,\mu)$.

Finally, using the Mellin transform $\hat u(z,y)=\int_0^\infty \rho^{-i z}u(\rho,y)\,\frac{\dd\rho}{\rho}$, we have
\begin{equation}
\label{EqbMT}
  \|\chi u\|_{\Hb^{s,\alpha}(M,\mu)}^2 \sim \int_{\Im z=-(\alpha-\frac{w}{2})} \la z\ra^{2 s} \|\wh{\chi u}(z,-)\|_{H_{\la z\ra^{-1}}^s(\pa M)}^2\,\dd z,
\end{equation}
i.e.\ the left and right hand sides are bounded by a $u$-independent multiple of each other. This follows from the analogous statement
\[
  \|u\|_{H^s(\R\times\R^{n-1})}^2 = \int_\R \la z\ra^{2 s} \|\hat u(z,-)\|_{H^s_{\la z\ra^{-1}}(\R^{n-1})}^2\,\dd z
\]
where $\hat u(z,y)=\int_\R e^{i x z}u(x,y)\,\dd x$ is the Fourier transform (up to a sign) of $u$ in the first coordinate; this in turn uses that $\la z\ra^{2 s}\la \la z\ra^{-1}\eta \ra^{2 s}=(1+|z|^2+|\eta|^2)^s$.

\begin{prop}[(Semi-)Fredholm property]
\label{PropbFred}
  Let $P\in\Diffb^m(M;E,F)$ and $s,\alpha\in\R$; consider $P$ as a bounded linear map
  \begin{equation}
  \label{EqbFred}
    P \colon \Hb^{s,\alpha}(M,\mu;E) \to \Hb^{s-m,\alpha}(M,\mu;F).
  \end{equation}
  \begin{enumerate}
  \item\label{ItbFredInj} If $P$ has injective principal symbol and $\alpha-\frac{w}{2}\notin\Re\injspecb(P)$, then~\eqref{EqbFred} has finite-dimensional kernel and closed range.
  \item\label{ItbFredSurj} If $P$ has surjective principal symbol and $\alpha-\frac{w}{2}\notin\Re\surjspecb(P)$, then the range of~\eqref{EqbFred} is closed and has finite-dimensional codimension.
  \item\label{ItbFredEll} If $P$ is elliptic, and $\alpha-\frac{w}{2}\notin\Re\specb(P)$, then~\eqref{EqbFred} is Fredholm.
  \end{enumerate}
  In each of these cases, the range of~\eqref{EqbFred} is equal to the $L^2(M,\mu;F)$-annihilator of the kernel of $P^*$ on $\Hb^{-s+m,-\alpha}(M,\mu;F)$.
\end{prop}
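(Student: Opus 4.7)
The plan is to construct, in each case, a parametrix for $P$---a left inverse in case~\eqref{ItbFredInj}, a right inverse in case~\eqref{ItbFredSurj}, a two-sided inverse in case~\eqref{ItbFredEll}---modulo compact errors on the relevant b-Sobolev spaces, so that the (semi-)Fredholm conclusions follow by standard functional analysis. The final statement on the range being the $L^2(M,\mu;F)$-annihilator of $\ker P^*$ is then a direct consequence of the Banach closed range theorem, using that the $L^2(M,\mu)$-pairing identifies the continuous dual of $\Hb^{s-m,\alpha}(M,\mu;F)$ with $\Hb^{-s+m,-\alpha}(M,\mu;F)$ and, under this identification, matches the Banach-space adjoint of $P$ with the formal adjoint $P^*$.

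For part~\eqref{ItbFredInj}, I would combine an interior pseudodifferential left parametrix (available by the injectivity of the principal symbol) with a boundary left parametrix obtained via the Mellin transform. The hypothesis $\alpha - \tfrac{w}{2}\notin\Re\injspecb(P)$ asserts that $N(P,z)$ is injective at every $z$ on the contour $\Im z = -(\alpha - \tfrac{w}{2})$; combined with the large-parameter ellipticity of $N(P,z)$ in a strip---which follows from injectivity of the principal symbol of $P$, most cleanly by passing to the elliptic auxiliary family $N(P^*P,z) = N(P,z)^* N(P,z)$---estimate~\eqref{EqbEllEst} holds uniformly for all $z$ along the line. Integrating in $z$ via~\eqref{EqbMT} and patching with the interior parametrix through a boundary cutoff $\chi$, one obtains a parametrix identity $Q P = I + R$ in which $R$ comprises a commutator piece $[P,\chi]$ supported compactly in $M^\circ$ and a normal-operator-difference piece arising from $P - N(P) \in \rho\Diffb^m$. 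Iterating (resumming the formal Neumann series in powers of $\rho$) upgrades $R$ to an operator gaining both regularity and weight, hence a compact map $\Hb^{s,\alpha}\to\Hb^{s,\alpha}$ by the compactness of the embedding $\Hb^{s,\alpha}\hookrightarrow\Hb^{s-1,\alpha-1}$ on the compact manifold with boundary $M$. Finite-dimensional kernel and closed range of $P$ then follow.

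Part~\eqref{ItbFredSurj} reduces to part~\eqref{ItbFredInj} applied to $P^*\in\Diffb^m(M;F,E)$, whose principal symbol is the adjoint of that of $P$ and therefore injective; the identity $N(P,z)^* = N(P^*,\bar z + i w)$ from the proof of Lemma~\ref{LemmabEll} gives $\injspecb(P^*) = \{-\bar\lambda - w : \lambda\in\surjspecb(P)\}$, so that $\alpha - \tfrac{w}{2}\notin\Re\surjspecb(P)$ is equivalent to $-\alpha - \tfrac{w}{2}\notin\Re\injspecb(P^*)$, which is precisely the condition for part~\eqref{ItbFredInj} to apply to $P^*\colon\Hb^{-s+m,-\alpha}\to\Hb^{-s,-\alpha}$. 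Closed range and finite-dimensional kernel for $P^*$ then translate by Banach duality into closed range and finite-dimensional cokernel for $P$, with cokernel naturally identified with $\ker P^*\cap\Hb^{-s+m,-\alpha}$; this simultaneously yields part~\eqref{ItbFredEll} (as the conjunction of parts~\eqref{ItbFredInj} and~\eqref{ItbFredSurj}) and the final range identification. The main obstacle lies in the boundary parametrix construction, specifically in two analytic subtleties: first, verifying that the injectivity estimate~\eqref{EqbEllEst} holds uniformly along the full Mellin line by patching the large-parameter regime for $|\Re z|$ large with the bounded-$z$ regime (where one must combine holomorphic dependence of $N(P,z)$ and pointwise injectivity into a single uniform constant); and second, organizing the $\rho$-resummation so that the terminal error is residual in the b-sense (smooth kernel vanishing to infinite order at the boundary face), hence compact on all b-Sobolev spaces.
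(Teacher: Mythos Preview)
Your proposal is correct and would work, but the paper takes a lighter route that avoids constructing a parametrix altogether. Rather than building $Q$ with $QP=I+R$ and then iterating in powers of $\rho$ to make $R$ residual, the paper proves the semi-Fredholm a~priori estimate directly: interior and uniform (cylindrical-end) elliptic estimates---for $P^*P$ when $P$ is only left elliptic---give $\|u\|_{\Hb^s}\leq C(\|Pu\|_{\Hb^{s-m}}+\|u\|_{\Hb^{s_0}})$ for any $s_0<s$; then the Mellin transform and~\eqref{EqbEllEst} along the line $\Im z=-\alpha$ are applied \emph{once} to $\chi u$ to gain one order of weight in the error term, yielding $\|u\|_{\Hb^s}\leq C(\|Pu\|_{\Hb^{s-m}}+\|u\|_{\Hb^{s-1,-1}})$. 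Compactness of $\Hb^s\hookrightarrow\Hb^{s-1,-1}$ finishes part~\eqref{ItbFredInj} without any Neumann resummation. For part~\eqref{ItbFredSurj} the paper is also more direct than your duality reduction: the range of $P$ contains the range of the elliptic operator $PP^*$ on $\Hb^{s+m}$, which by part~\eqref{ItbFredEll} is closed of finite codimension, and any subspace containing a closed finite-codimensional subspace is itself closed of finite codimension. Your parametrix approach is the classical one from Melrose's book and buys you an actual approximate inverse (useful for finer mapping properties), but the two obstacles you flag---uniform patching of~\eqref{EqbEllEst} across the full line, and organizing the $\rho$-resummation to a residual error---are precisely what the paper's estimate-only argument sidesteps.
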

\begin{proof}
  Define by $\mu_0=\rho^{-w}\mu$ a density with weight $0$; then $\Hb^{s,\alpha}(M,\mu)=\Hb^{s,\alpha_0}(M,\mu_0)$ where $\alpha_0=\alpha-\frac{w}{2}$. Moreover, if $P_0^*$ denotes the adjoint of $P$ with respect to $\mu_0$, then $P^*=\rho^{-w}P_0^*\rho^w$. We can thus reduce the Proposition to the case that $\mu$ has weight $0$. Furthermore, replacing $P$ by $\rho^{-\alpha}P\rho^\alpha$, we may assume $\alpha=0$. The assumption in part~\eqref{ItbFredInj} is now that $0\notin\Re\injspecb(P)$, similarly for the other parts.

  Parts~\eqref{ItbFredInj} and~\eqref{ItbFredEll} are standard, see e.g.\ \cite[Theorem~5.40]{MelroseAPS}. In brief, elliptic estimates (for $P$ if $P$ is elliptic, and for $P^*P$ when $P$ merely has injective principal symbol) in $M^\circ$ as well as elliptic estimates for the uniformly elliptic operators on $\R\times\R^{n-1}$ arising via pushforward along~\eqref{EqbMap} of localizations of $P$ or $P^*P$ to coordinate charts near $\pa M$ imply for any $s_0\in\R$ an estimate
  \[
    \|u\|_{\Hb^s(M;E)} \leq C\left(\|P u\|_{\Hb^{s-m}(M;F)} + \|u\|_{\Hb^{s_0}(M;E)}\right).
  \]
  We take $s_0<s-1$. Using~\eqref{EqbMT} and the estimate~\eqref{EqbEllEst}---which by assumption is valid for \emph{all} $z\in\R$---, we can then further estimate $\|u\|_{\Hb^{s_0}(M;E)}\leq\|(1-\chi)u\|_{\Hb^{s_0,-1}(M;E)}+\|\chi u\|_{\Hb^{s_0}(M;E)}$ and then (with $C$ changing from line to line)
  \begin{align*}
    \|\chi u\|_{\Hb^{s_0}(M;E)}^2 &\leq C\int_\R \la z\ra^{2 s_0}\|\wh{\chi u}(z,-)\|_{H_{\la z\ra^{-1}}^{s_0}(\pa M;E|_{\pa M})}^2\,\dd z \\
      &\leq C\int_\R \la z\ra^{2(s_0-m)}\|N(P,z)\wh{\chi u}(z,-)\|_{H_{\la z\ra^{-1}}^{s_0-m}(\pa M;F|_{\pa M})}^2\,\dd z \\
      &\leq C\|N(P)(\chi u)\|_{\Hb^{s_0-m}(M;F)}.
  \end{align*}
  Replacing $N(P)$ by $P$ and commuting $P$ through $\chi$, we arrive at
  \[
    \|u\|_{\Hb^s(M;E)} \leq C\left(\|P u\|_{\Hb^{s-m}(M;F)} + \|u\|_{\Hb^{s-1,-1}(M;E)}\right).
  \]
  Since by the Rellich--Kondrakhov theorem the inclusion $\Hb^s(M;E)\hra\Hb^{s-1,-1}(M;E)$ is compact, this implies that $P$ has finite-dimensional kernel and closed range; this proves part~\eqref{ItbFredInj}. When $P$ is elliptic, an analogous estimate for $P^*$ implies also the finite-co\-di\-men\-sio\-na\-li\-ty of the range of $P$.

  For part~\eqref{ItbFredSurj}, we consider the elliptic operator $P P^*$; note then that the range of~\eqref{EqbFred}, which contains the closed and finite-codimensional space $P P^*(\Hb^{s+m}(M,\mu_0;F))$, is itself closed.
\end{proof}

\begin{cor}[Generalized right inverse]
\label{CorbRI}
  Suppose $P\in\Diffb^m(M;E,F)$ has surjective principal symbol. Let $\alpha\in\R$ be such that $\alpha-\frac{w}{2}\notin\Re\surjspecb(P)$. Then there exists an operator $G$ which is continuous as map $G\colon\Hb^{s-m,\alpha}(M,\mu;F)\to\Hb^{s,\alpha}(M,\mu;E)$ for all $s\in\R$ and which is a generalized right inverse of $P$ in the sense that $P G f=f$ for all $f\in\Hb^{s-m,\alpha}(M,\mu;F)$ which are orthogonal to (the finite-dimensional space) $\ker_{\Hb^{\infty,-\alpha}(M,\mu;F)}P^*$ (equivalently: for all $f$ in the range of $P$ on $\Hb^{s,\alpha}(M,\mu;E)$).
\end{cor}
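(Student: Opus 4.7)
By Proposition~\ref{PropbFred}\eqref{ItbFredSurj}, the map $P\colon\Hb^{s,\alpha}(M,\mu;E)\to\Hb^{s-m,\alpha}(M,\mu;F)$ has closed range of finite codimension, equal to the $L^2(M,\mu;F)$-annihilator of $\ker P^*|_{\Hb^{-s+m,-\alpha}(F)}$. The operator $PP^*$ is b-elliptic because its principal symbol $\sigmab(P)\sigmab(P)^*$ is invertible by surjectivity of $\sigmab(P)$; as $P^*v=0$ forces $PP^*v=0$, the b-elliptic regularity encoded in the estimates underlying the proof of Proposition~\ref{PropbFred} promotes any such $v\in\Hb^{-s+m,-\alpha}(F)$ to $v\in\Hb^{\infty,-\alpha}(F)$. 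Consequently the cokernel of $P$ is $s$-independent, finite-dimensional, and equals $N:=\ker P^*|_{\Hb^{\infty,-\alpha}(F)}$; the $L^2(M,\mu;F)$-orthogonal projection $\Pi_N$ onto $N$ is a smoothing operator bounded on every b-Sobolev space.

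To produce $G$, I would build a right parametrix of $P$ in the b-pseudodifferential calculus and then correct it by a finite-rank operator. In the interior of $M$, pointwise surjectivity of $\sigmab(P)$ supplies the symbolic right inverse $\sigmab(P)^*(\sigmab(P)\sigmab(P)^*)^{-1}$ of the principal symbol, from which an interior right parametrix modulo smoothing follows by the usual iterative construction. Near $\pa M$ the construction proceeds by inverting the Mellin-transformed normal operator family $N(P,z)$ on the weight line $\Im z=-(\alpha-\tfrac{w}{2})$. The key point is that, by the identity $N(P,z)^*=N(P^*,\bar z+iw)$ from the proof of Lemma~\ref{LemmabEll} combined with Definition~\ref{DefbSpecb}, the hypothesis $\alpha-\tfrac{w}{2}\notin\Re\surjspecb(P)$ is equivalent to $N(P,z)$ being surjective onto smooth sections of $F|_{\pa M}$ for every $z$ on this line; by Lemma~\ref{LemmabEll}\eqref{ItbEllSurj} this persists in a complex strip around the line. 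Selecting a holomorphic family of right inverses of $N(P,z)$ in such a strip with controlled growth as $|\Im z|\to\infty$---for instance a Moore--Penrose-type inverse $N(P^*,z)N(PP^*,z)^{-1}$, suitably patched across the discrete singularities of $N(PP^*,z)^{-1}$---and combining with the interior parametrix produces $G_0\in\Psib^{-m}(M;F,E)$ acting boundedly $\Hb^{s-m,\alpha}(F)\to\Hb^{s,\alpha}(E)$ for every $s$, with $PG_0=I-R$ for some b-smoothing operator $R$.

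The final step is a finite-rank correction. Since $R$ is smoothing it is compact on each $\Hb^{s-m,\alpha}(F)$ by the Rellich--Kondrakhov compactness used in the proof of Proposition~\ref{PropbFred}, so $I-R$ is Fredholm; combined with the identification of the cokernel of $P$ as $N$ from the first step, one obtains a finite-dimensional subspace $V\subset\Hb^{\infty,\alpha}(E)$ of smooth-to-the-boundary corrections such that $P|_V$ surjects onto a complement of $\mathrm{range}(I-R)$ inside $N^\perp\cap\Hb^{s-m,\alpha}(F)$. A standard Schur complement argument then yields a bounded finite-rank operator $K\colon\Hb^{s-m,\alpha}(F)\to V$ such that $G:=G_0+K$ satisfies $PG=I-\Pi_N$ on every $\Hb^{s-m,\alpha}(F)$. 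Continuity of $G$ as a map $\Hb^{s-m,\alpha}(F)\to\Hb^{s,\alpha}(E)$ for every $s$ is automatic: $G_0$ has the required mapping properties by construction, and $K$ is finite-rank with smooth range.

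The main obstacle is the construction of the boundary parametrix: one must select a right inverse of $N(P,z)$ depending holomorphically on $z$ in a complex strip around the weight line, with the symbol-type estimates in $z$ needed for membership in $\Psib^{-m}$. This is the right-elliptic analogue of the classical b-parametrix construction of~\cite[\S5]{MelroseAPS}, whose implementation requires the substantial revisions alluded to in the introduction to accommodate the asymmetry between $P$ and $P^*$ inherent in the underdetermined setting.
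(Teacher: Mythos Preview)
Your approach is in principle viable but takes a substantially longer route than the paper, and the step you yourself flag as ``the main obstacle'' is a genuine gap.

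The paper's proof is a one-paragraph weighted $PP^*$ argument. After reducing to $w=0$, one sets $u=\rho^{2\alpha}P^*\rho^{-\alpha}v$ and solves the \emph{elliptic} equation $T_\alpha v=\rho^{-\alpha}f$, where $T_\alpha:=(\rho^{-\alpha}P\rho^\alpha)(\rho^{-\alpha}P\rho^\alpha)^*$. The point is that $N(T_\alpha,z)=N(P^*,\bar z+i\alpha)^*N(P^*,z+i\alpha)$ is invertible for all real $z$ precisely because of the hypothesis $\alpha\notin\Re\surjspecb(P)$; hence $T_\alpha$ is Fredholm on $\Hb^{s,0}$ for every $s$ by Proposition~\ref{PropbFred}\eqref{ItbFredEll}, with kernel and cokernel both equal to $\rho^\alpha\ker_{\Hb^{\infty,-\alpha}}P^*$. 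The operator $G$ is then simply the composite $\rho^{2\alpha}P^*\rho^{-\alpha}\circ(T_\alpha\text{-inverse})\circ\rho^{-\alpha}$, bounded on every $\Hb^{s,\alpha}$ by the elliptic theory already in hand. No b-pseudodifferential parametrix is built.

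Your route instead attempts to construct a right b-parametrix for $P$ directly. Your Moore--Penrose candidate $N(P^*,z)N(PP^*,z)^{-1}$ is, at the normal-operator level, essentially the paper's ansatz---but you then face the problem that $N(PP^*,z)^{-1}$ may have poles on the weight line which are \emph{not} excluded by the hypothesis on $\alpha$ (which only controls $\ker N(P^*,\cdot)$, not $\ker N(P,\cdot)$ or the full $\specb(PP^*)$). Your proposed ``patching across discrete singularities'' and the holomorphic selection of right inverses in a strip with symbol-type growth are exactly the nontrivial large-calculus constructions that Remark~\ref{RmkAsPx} explicitly leaves open. The paper's weight conjugation sidesteps this entirely: by moving to $T_\alpha$ at weight zero, one arranges invertibility of the normal operator on the \emph{real} line, so ordinary elliptic b-Fredholm theory applies without any holomorphic right-inverse gymnastics.
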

\begin{proof}
  We may reduce to the case $w=0$. We then formally solve the equation $P u=f$ via
  \begin{equation}
  \label{EqbRIT}
    u=\rho^{2\alpha}P^*\rho^{-\alpha}v,\qquad T_\alpha v = \rho^{-\alpha}f, \quad
    T_\alpha := (\rho^{-\alpha}P\rho^\alpha)(\rho^{-\alpha}P\rho^\alpha)^*.
  \end{equation}
  Note then that
  \[
    N(T_\alpha,z)=N(P,z-i\alpha)N(P^*,z+i\alpha) = N(P^*,\bar z+i\alpha)^*N(P^*,z+i\alpha)
  \]
  is invertible for $z\in\R$, and therefore $T_\alpha\colon\Hb^{s,0}(M;F)\to\Hb^{s-2 m,0}(M;F)$ is Fredholm for all $s\in\R$. The $L^2(M;F)$-orthogonal complement of its range (as well as its kernel) is $\rho^\alpha\ker_{\Hb^{0,-\alpha}(M;F)}P^*=\rho^\alpha\ker_{\Hb^{\infty,-\alpha}(M;F)}P^*$ (using elliptic regularity).
\end{proof}

\begin{cor}[Mapping properties on Schwartz spaces]
\label{CorbSchwartz}
  Let $P\in\Diffb^m(M;E,F)$, and consider $P,P^*$ as bounded linear maps
  \[
    P \colon \CIdot(M;E) \to \CIdot(M;F),\qquad
    P^* \colon \CmI(M;F) \to \CmI(M;E).
  \]
  Suppose that the principal symbol of $P$ is injective, surjective, or invertible. Then $P$ has closed range, and $P(\CIdot(M;E))$ is the annihilator of $\ker_{\CmI(M;F)}P^*$.
\end{cor}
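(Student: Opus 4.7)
The statement is essentially an application of the abstract functional-analytic closed range theorem, Theorem~\ref{ThmCl}, in the Appendix, to the operator $P$ acting on the Fr\'echet space $\CIdot(M;E)$. First I would endow $\CIdot(M;E)$ with its natural Fr\'echet topology given by the family of seminorms $\|\cdot\|_{\Hb^{s,\alpha}(M,\mu;E)}$ for $(s,\alpha)\in\R\times\R$; its topological dual is canonically identified with $\CmI(M;E)$, and under this identification the transpose of $P|_{\CIdot(M;E)}$ coincides with $P^*\colon\CmI(M;F)\to\CmI(M;E)$ (the Hermitian conjugation needed to pass between sesquilinear and bilinear pairings is harmless for the annihilator statement).

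By Lemma~\ref{LemmabEll}, the set $\Re\injspecb(P)$ (in the case of injective principal symbol) or $\Re\surjspecb(P)$ (in the case of surjective principal symbol) is discrete, so I can fix a sequence $\alpha_j\to+\infty$ with $\alpha_j-\tfrac{w}{2}$ avoiding this set. Then $\CIdot(M;E)=\bigcap_{s,j}\Hb^{s,\alpha_j}(M,\mu;E)$ as Fr\'echet spaces, and by Proposition~\ref{PropbFred}, for every $s\in\R$ and every such $\alpha_j$, the operator $P\colon\Hb^{s,\alpha_j}(M,\mu;E)\to\Hb^{s-m,\alpha_j}(M,\mu;F)$ has closed range, with either finite-dimensional kernel (injective symbol case) or finite-dimensional cokernel (surjective symbol case); the elliptic case combines the two. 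Theorem~\ref{ThmCl} can then be applied to the map $P\colon\CIdot(M;E)\to\CIdot(M;F)$, using the Sobolev scale above as the defining family of Banach space quotients, to conclude that $P$ has closed range on $\CIdot$.

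Finally, identifying $P(\CIdot(M;E))$ with the annihilator of $\ker_{\CmI(M;F)}P^*$ is a standard Hahn-Banach/bipolar argument once closed range is known. The annihilator of $P(\CIdot(M;E))$ inside $\CmI(M;F)$ is, by definition of the transpose, precisely $\ker_{\CmI(M;F)}P^*$; and since $P(\CIdot(M;E))$ is closed by the previous step, the bipolar theorem identifies it with the annihilator in $\CIdot(M;F)$ of this kernel. The main obstacle is the application of Theorem~\ref{ThmCl}: one must verify that the semi-Fredholm estimates at each Sobolev level $(s,\alpha_j)$ combine with the compatibility of the scales, as $s$ and $\alpha_j$ vary, into the hypothesis required by the abstract theorem. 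The discreteness of $\injspecb(P)$ and $\surjspecb(P)$ from Lemma~\ref{LemmabEll} is exactly what makes the choice of an admissible sequence $\alpha_j\to+\infty$, compatible with semi-Fredholm behavior at every rung of the ladder, possible in the first place.
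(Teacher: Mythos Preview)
Your proposal is correct and follows essentially the same route as the paper: the paper likewise chooses sequences $s_j,\alpha_j\to\infty$ with $\alpha_j-\tfrac{w}{2}$ avoiding the relevant boundary spectrum, identifies $\CIdot(M;E)=\bigcap_j\Hb^{s_j,\alpha_j}(M,\mu;E)$, and applies Theorem~\ref{ThmCl}. The one point where the paper is more explicit than your outline is in verifying the hypothesis of Theorem~\ref{ThmCl}: one must show that $P^*(\CmI(M;F))\cap U_j$ is weak*~closed in $\CmI(M;E)$, where $U_j$ is the unit ball of $\Hb^{-s_j,-\alpha_j}(M,\mu;E)$; the paper does this by taking a weak*~convergent sequence $P^*f_k^*=u_k^*\in U_j$, observing that the limit $u^*$ annihilates $\ker_{\Hb^{s_j,\alpha_j}}P$, and invoking the closed-range/annihilator statement of Proposition~\ref{PropbFred} (applied to $P^*$ at the fixed Sobolev level) to conclude $u^*\in P^*(\Hb^{-s_j+m,-\alpha_j}(M,\mu;F))$.
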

\begin{proof}
  We first consider the case that $P$ has surjective principal symbol. Fix sequences $s_j\to\infty$, $\alpha_j\to\infty$ to that $\alpha_j-\frac{w}{2}\notin\Re\surjspecb(P)$ for all $j$. By Sobolev embedding, we have $\CIdot(M;E)=\bigcap_j \Hb^{s_j,\alpha_j}(M,\mu;E)$. In the notation of Theorem~\ref{ThmCl}, we need to show that
  \[
    P^*(\CmI(M;F))\cap U_j\subset\CmI(M;E)
  \]
  is weak* closed, where
  \[
    U_j=\left\{u^*\in\CmI(M;E)\colon|\la u^*,u\ra|\leq\|u\|_{\Hb^{s_j,\alpha_j}(M,\mu;E)}\ \forall\,u\in\CIdot(M;E)\right\}
  \]
  is the unit ball in $\Hb^{-s_j,-\alpha_j}(M,\mu;E)$. Since $\CIdot(M;E)$ is separable, the weak* topology on $U_j$ is metrizable. Suppose now that $P^*f_k^*=u_k^*\in U_j$ is a weak* convergent sequence in $\CmI(M;E)$; its limit satisfies $u^*\in U_j$. Then for all $u\in\ker_{\Hb^{s_j,\alpha_j}(M,\mu;E)}P$ we have $\la u_k^*,u\ra=\la P^*f_k^*,u\ra=\la f_k^*,P u\ra=0$ and therefore $\la u^*,u\ra=0$. But this implies that $u^*\in P^*(\Hb^{-s_j+m,-\alpha_j}(M,\mu;F))$ by Proposition~\ref{PropbFred} (applied to $P^*$), finishing the proof that $P$ has closed range. The case that $P$ has injective principal symbol is handled in a completely analogous manner.
\end{proof}

\begin{problem}[Right inverse]
\label{ProbARI}
  When $P\in\Diffb^m(M;E,F)$ is underdetermined-elliptic, does $P\colon\CIdot(M;E)\to\CIdot(M;F)$ have a continuous right inverse on its range, i.e.\ a continuous right inverse $P(\CIdot(M;E))\to\CIdot(M;E)$?
\end{problem}

This is equivalent to the existence of a complementary subspace of $\ker_{\CIdot(M;E)}P$ in $\CIdot(M;E)$. When $P$ is left elliptic, this is clear since $\ker_{\CIdot(M;E)}P$ is finite-dimensional and thus complemented.

\begin{rmk}[Right inverse on closed manifolds]
\label{RmkRightInv}
  A right elliptic differential operator $Q\in\Diff^m(X;G,H)$ on a closed manifold $X$, with $G,H\to X$ smooth vector bundles, always has a continuous right inverse defined on its range. This follows from the existence of an $L^2(X;G)$-orthogonal splitting $\CI(X;G)=\ker_{\CI(X;G)}Q\oplus Q^*(\CI(X;H))$, with both summands closed in $\CI(X;G)$. The proof of the latter is similar to that of \cite[Corollary~4.2]{BergerEbinTensors} which treats the case that $Q$ is left elliptic: the orthogonal splitting $L^2(X;G)=\ker_{L^2(X;G)}Q\oplus Q^*(H^m(X;H))$ implies for $u\in\CI(X;G)$ the splitting $u=u_0+Q^*u_1$ with $u_0\in\ker_{L^2(X;G)}Q$ and $u_1\in H^m(X;H)$; but then $Q u=Q Q^*u_1\in\CI(X;H)$ implies $u_1\in\CI(X;H)$ by elliptic regularity and thus also $u_0\in\CI(X;G)$. Furthermore, $Q^*(\CI(X;H))$ is closed since $Q^*u_k\to f\in\CI(X;H)$ implies that $f$, like all $Q^*u_k$ is orthogonal to $\ker_{L^2(X;G)}Q$ and thus equal to $Q^*u$ for some $u\in H^m(X;H)$, which must then lie in $\CI(X;H)$ by elliptic regularity.
\end{rmk}

The case of underdetermined-elliptic totally characteristic differential operators appears to be more subtle. (For example, the $\Hb^{0,\alpha}(M;E)$-orthogonal splitting $\Hb^{s,\alpha}(M,\mu;E)=\ker_{\Hb^{s,\alpha}(M,\mu;E)}P\oplus \rho^{2\alpha}P^*(\Hb^{s+m,-\alpha}(M,\mu;F))$, for $s\in\R\cup\{\infty\}$ and $\alpha\notin\Re\surjSpecb(P)$, cannot be used in a fashion analogous to Remark~\ref{RmkRightInv}.) Absent an affirmative resolution of Problem~\ref{ProbARI}, we record instead the following result, which is sufficient for most applications:

\begin{prop}[Smooth solvability for finite-dimensional families]
\label{PropACI}
  Let $B$ be a finite-dimensional smooth manifold. Let $P\in\Diffb^m(M;E,F)$ be left or right elliptic. Let $f\in\CI(B;\CIdot(M;F))$, and suppose $f(b,-)\in\ann\ker_{\CmI(M;F)}P^*$ for all $b\in B$. Then there exists $u\in\CI(B;\CIdot(M;E))$ so that $P(u(b,-))=f(b,-)$ for all $b\in B$.
\end{prop}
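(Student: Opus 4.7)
The plan is to obtain the smooth-in-$b$ solution by combining Corollary~\ref{CorbSchwartz} (which yields Schwartz solutions pointwise in $b$) with a partition-of-unity reduction on $B$, Taylor expansion at a base point, Borel summation in the Fréchet space $\CIdot(M;E)$, and a parametrized adaptation of the closed-range argument underlying Corollary~\ref{CorbSchwartz}.

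First, I would reduce to a local problem. Let $\{W_i\}$ be a locally finite open cover of $B$ by coordinate charts and $\{\chi_i\}$ a subordinate smooth partition of unity. Since $P$ acts only in $M$ and commutes with multiplication by scalar functions of $b$, local solutions $U_i \in \CI(W_i;\CIdot(M;E))$ with $P U_i(b,-) = F(b,-)$ on $W_i$ glue via $U := \sum_i \chi_i U_i$ into a global smooth solution, because $P U = \sum_i \chi_i(b) P U_i(b,-) = \sum_i \chi_i(b) F(b,-) = F$. It thus suffices to solve the equation on each coordinate chart $W \subset \R^{\dim B}$.

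Next, in a chart $W$, pick $b_0 \in W$ and Taylor-expand $F$. The Taylor coefficients $F_\alpha := \pa_b^\alpha F(b_0,-) \in \CIdot(M;F)$ all lie in the closed subspace $\ann\ker_{\CmI(M;F)} P^*$, since $F$ is smooth into this closed subspace and differentiation preserves it. By Corollary~\ref{CorbSchwartz}, lift each $F_\alpha$ to some $U_\alpha \in \CIdot(M;E)$ with $P U_\alpha = F_\alpha$. Borel's lemma in the Fréchet space $\CIdot(M;E)$ then yields $U^{(0)} \in \CI(W;\CIdot(M;E))$ with formal Taylor expansion $\sum_\alpha \frac{(b-b_0)^\alpha}{\alpha!} U_\alpha$ at $b_0$. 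The residual $R := F - P U^{(0)}$ is smooth into $\ann\ker P^* \cap \CIdot(M;F)$ and vanishes to infinite order at $b_0$.

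The final and most delicate task is to solve $P\Phi = R$ for $\Phi \in \CI(W;\CIdot(M;E))$ on some neighborhood of $b_0$. I would address this by running the Hahn-Banach/closed-range argument of Corollary~\ref{CorbSchwartz} on the parametrized operator
\[
  \tilde P : \CI(W; \CIdot(M;E)) \to \CI(W; \CIdot(M;F)),\qquad (\tilde P U)(b,-) := P(U(b,-)),
\]
to show that its image is closed and equal to $\{G \in \CI(W;\CIdot(M;F)) : G(b,-) \in \ann\ker P^*\ \forall\,b\}$. The dual formulation, via Theorem~\ref{ThmCl}, reduces to a weak* closed-range property for $\tilde P^*$ on compactly supported distributions on $W$ with values in $\CmI(M;-)$; this should follow from the pointwise argument of Corollary~\ref{CorbSchwartz} combined with uniform Sobolev-type estimates in the finite-dimensional parameter $b$, which is where the assumption $\dim B < \infty$ enters crucially. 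The main obstacle I anticipate is establishing this parametrized weak* closed-range property without invoking a continuous linear right inverse of $P$ on $\CIdot$-spaces (whose existence is Problem~\ref{ProbARI}); I would overcome this by exploiting that the pointwise closed-range statement holds uniformly in $b$ and, in particular, that the annihilator condition on $F$ already guarantees solvability fiberwise, so only the parameter regularity must be secured.
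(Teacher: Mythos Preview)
Your reduction to local charts via a partition of unity is fine and matches the paper's first move. However, the Taylor/Borel detour in your middle steps is a red herring: if your final step 4 actually works for the remainder $R$, it works verbatim for $F$ itself, since nothing in your sketch of step 4 uses the infinite-order vanishing of $R$ at $b_0$. So the content of your argument is entirely in step 4, and there you have not gone beyond restating the problem. Saying you will ``run the closed-range argument on the parametrized operator $\tilde P$'' and that the weak* closed-range property ``should follow from the pointwise argument combined with uniform Sobolev-type estimates in $b$'' is precisely the assertion to be proved; you have not identified the mechanism that makes it true without a continuous right inverse of $P$.

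The paper's proof supplies that mechanism concretely. After the partition-of-unity reduction (and a Seeley extension), it takes $B$ to be a torus $\mathbb T^N$ so that Fourier transform in $b$ decouples the parametrized problem into a $\mathbb Z^N$-indexed family of unparametrized problems on $M$. The key point is then on the \emph{adjoint} side: for each fixed weighted Sobolev index $(s_j,\alpha_j)$, the kernel of $P^*$ on $\Hb^{-s_j+m,-\alpha_j}$ is finite-dimensional (this is where right ellipticity enters, since $P^*$ is left elliptic), hence has a closed complement $V=\rho^{-2\alpha_j}P(\Hb^{-s_j+2m,\alpha_j})$. Given a weak*-convergent sequence $(P^B)^*F_k^*=U_k^*$ in the polar of a Sobolev ball, one lifts each Fourier mode $\wh{U_k^*}(\beta,-)$ to the \emph{unique} preimage in $V$; uniqueness yields a $\beta$-independent norm bound, hence the sequence of lifts is Cauchy and its limit exhibits $U^*$ as lying in the range of $(P^B)^*$. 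This is exactly the uniform-in-$\beta$ estimate you gesture at, but it requires the choice of a complement to $\ker P^*$ and does not follow from the pointwise closed-range statement alone. Your outline is missing this idea.
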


See \cite[Theorems~52.5 and 52.6]{TrevesTVS} for general results of this type; we give a direct proof in the present setting.

\begin{proof}[Proof of Proposition~\usref{PropACI}]
  When $P$ is left elliptic or elliptic, this follows from the existence of a continuous right inverse on $P(\CIdot(M;E))$. If $P$ is underdetermined-elliptic, we argue as follows. Using a partition of unity on $B$, a Seeley extension argument in case $B$ has boundary, and using the linearity of $P$, it suffices to consider the case that $B$ is the $N$-torus. Denoting by $P^B\in\Diffb^m(B\times M;\pi_2^*E,\pi_2^*F)$ (with $\pi_2\colon B\times M\to M$ the projection) the operator defined by $(P_B u)(b,-)=P(u(b,-))$, we shall show using Theorem~\ref{ThmCl} that $(P^B)^*\colon\CmI(B\times M;\pi_2^*F)\to\CmI(B\times M;\pi_2^*E)$ has weak* closed range. Suppose $(P^B)^*f_k^*=u_k^*\in H^{-s_j}(B;\Hb^{-s_j,-\alpha_j}(M,\mu;E))$ is a weak* convergent sequence with limit $u^*$; we need to show that $u^*$ lies in the range of $(P^B)^*$. Let us employ the Fourier transform in $B$, denoted by a hat, to pass to spaces of polynomially weighted $\ell^2$-sequences, parameterized by a momentum variable $\beta\in\Z^N$, with values in $\Hb^{-s_j,-\alpha_j}(M,\mu;E)$. Necessarily then, $\wh{u_k^*}(\beta,-)$ lies in the $L^2(M,\mu;E)$-orthogonal complement of $\ker_{\Hb^{s_j,\alpha_j}(M,\mu;E)}P$. Since the kernel of $P^*$ on $\Hb^{-s_j+m,-\alpha_j}(M,\mu;F)$ has a complement given by $V:=\rho^{-2\alpha_j}P(\Hb^{-s_j+2 m,\alpha_j}(M,\mu;E))$, there exists a unique $\tilde f_k^*(\beta,-)\in V$, with norm bounded by a $\beta$-independent constant times that of $\wh{u_k^*}(\beta,-)$, so that $P^*\tilde f_k^*(\beta,-)=\wh{u_k^*}(\beta,-)$. Since $\wh{u_k^*}$ is a Cauchy sequence, so is $\tilde f_k^*$, and the limit $\tilde f^*$ is the Fourier transform of $f^*\in H^{-s_j}(B;\Hb^{-s_j+m,-\alpha_j}(M,\mu;F))$ where $(P^B)^*f^*=u^*$.

  Finally, note that if $f^*\in\CmI(B\times M;\pi_2^*F)$ lies in $\ker(P^B)^*$, then $\wh{f^*}(\beta,-)\in\ker P^*$ for all $\beta\in\Z^N$. The Proposition now follows from the fact that $P^B(\CIdot(B\times M;\pi_2^*E))=\ann\ker_{\CmI(B\times M;\pi_2^*F)}(P^B)^*$.
\end{proof}

Our main interest in this paper is in the mapping properties of $P$ on spaces of polyhomogeneous distributions. 

\begin{definition}[Conormality and polyhomogeneity]
\fakephantomsection
\label{DefAs}
  \begin{enumerate}
  \item\label{ItAsCon} For $\alpha\in\R$, we let $\cA^\alpha(M)=\{u\colon A u\in\rho^\alpha L^\infty(M)\ \forall\,A\in\Diffb(M)\}$ be the space of \emph{conormal functions} with weight $\alpha$.
  \item\label{ItAsPhg} An \emph{index set} is a subset $\cE\subset\C\times\N_0$ so that $(z,k)\in\cE$ implies $(z+1,k)\in\cE$ and also $(z,k-1)\in\cE$ when $k\geq 1$, and for all $C\in\R$ the set of $(z,k)\in\cE$ with $\Re z<C$ is finite. Let $\chi\in\CIc([0,1)_\rho\times\pa M)$ be identically $1$ near $\rho=0$. We then define the space $\cA_\phg^\cE(M)$ of \emph{$\cE$-smooth functions} (or \emph{polyhomogeneous conormal functions with index set $\cE$}) to consist of all $u\in\CI(M^\circ)$ for which there exist $u_{(z,k)}\in\CI(\pa M)$, $(z,k)\in\cE$, so that for all $C\in\R$, we have
  \begin{equation}
  \label{EqAs}
    \left(u(\rho,y)-\sum_{(z,k)\in\cE,\ \Re z\leq C}\rho^z(\log\rho)^k u_{(z,k)}(y)\right)\chi\in\cA^C([0,1)\times\pa M).
  \end{equation}
  \end{enumerate}
  (Polyhomogeneous) conormal sections of a smooth vector bundle $E\to M$ are defined analogously, now with $u_{(z,k)}\in\CI(\pa M;E|_{\pa M})$.
\end{definition}

If $\cE$ is an index set and we are given $u_{(z,k)}\in\CI(\pa M)$, $(z,k)\in\cE$, then there exists $u\in\cA_\phg^\cE(M)$ so that~\eqref{EqAs} holds. (This is a variation of Borel's lemma.) More generally, given index sets $\cE_1,\cE_2,\ldots$ with $C_j:=\min_{(z,0)\in\cE_j}\Re z\to\infty$ as $j\to\infty$, and given $u_j\in\cA_\phg^{\cE_j}(M)$, there exists $u\in\cA_\phg^\cE(M)$, $\cE=\bigcup_j\cE_j$, so that $u-\sum_{j=1}^J u_j\in\cA^{C_J-1}(M)$ for all $J$; such a $u$ is unique modulo $\CIdot(M)$, and is called an \emph{asymptotic sum} of the $u_j$.

\begin{lemma}[Polyhomogeneous nullspace]
\label{LemmaAsKer}
  Suppose $P\in\Diffb^m(M;E,F)$ has injective principal symbol. Let $u\in\CmI(M;E)$ and suppose $P u=0$ (or more generally $P u\in\CIdot(M;F)$). Then there exists $\alpha\in\R$ so that $u\in\cA^\alpha(M;E)$. Denote by $\alpha_0\in\R\cup\{+\infty\}$ the supremum of all such $\alpha$; then $u\in\cA_\phg^\cE(M;E)$ where $\cE=\cE_0(P,\alpha_0)\cup\cE_+(P,\alpha_0)$, where\footnote{Note that $\cE_0(P,\alpha_0)$ is not an index set.} $\cE_0(P,\alpha_0)=\{(z,j)\colon \exists\,(z,k)\in\injSpecb(P),\ \Re z=\alpha_0,\ k\geq j\}$, while $\cE_+(P,\alpha_0)$ is an index set with $\Re z>\alpha_0$ for all $(z,k)\in\cE_+(P,\alpha_0)$. (If $\alpha_0=+\infty$, this means $u\in\CIdot(M;E)$.)
\end{lemma}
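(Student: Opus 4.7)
The plan is to proceed in three stages: (1) establish conormality of $u$, (2) extract a polyhomogeneous expansion via Mellin analysis on $Q := P^*P$, and (3) refine the leading exponents at weight $\alpha_0$ to lie in $\injSpecb(P)$ by a direct computation with $P$ itself. Stages (1) and (2) are standard b-analysis once one notes that $Q$ is b-elliptic; stage (3) is the new ingredient and is where the underdetermined nature of $P$ enters.

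For conormality, $Q \in \Diffb^{2m}(M;E,E)$ is b-elliptic by injectivity of the principal symbol of $P$, and $Qu = P^*(Pu) \in \CIdot(M;E)$. Since $u \in \CmI(M;E) \subset \Hb^{-s_0,-\alpha_0'}(M,\mu;E)$ for some $s_0,\alpha_0'\in\R$, I would apply Proposition~\ref{PropbFred}\eqref{ItbFredEll} to $Q$ at weights $\alpha \nearrow \infty$ in $\R \setminus (\tfrac{w}{2} + \Re\specb(Q))$; Fredholmness combined with elliptic regularity for kernel elements between successive points of $\tfrac{w}{2}+\Re\specb(Q)$ shifts $u$ into $\Hb^{\infty,\alpha_1}(M,\mu;E)$ for some $\alpha_1 \in \R$. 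Bootstrapping under arbitrary $A\in\Diffb(M;E,E)$ and Sobolev embedding (via the coordinate change~\eqref{EqbMap}) upgrades this to $u\in\cA^{\alpha_1'}(M;E)$ for some $\alpha_1'$. If $\alpha_0 = \infty$, then $u$ is conormal at every weight and smooth in $M^\circ$, so $u\in\CIdot(M;E)$.

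For $\alpha_0 < \infty$, I would extract the expansion by Mellin analysis on $Q$. Let $\chi\in\CIc([0,1)\times\pa M)$ equal $1$ near $\{0\}\times\pa M$; then $Q(\chi u) = \chi Qu + [Q,\chi]u \in \CIdot(M;E)$, because $[Q,\chi]$ is supported away from $\pa M$ where $u$ is smooth by interior ellipticity of $Q$. Decomposing $Q = N(Q) + \rho\tilde R$ with $\tilde R\in\Diffb^{2m}$ and Mellin-transforming in $\rho$ yields
\[
  N(Q,z)\widehat{\chi u}(z,\cdot) = \widehat{Q(\chi u)}(z,\cdot) - \widehat{\rho\tilde R(\chi u)}(z,\cdot),
\]
where the first term on the right is entire and rapidly decaying in $|\Re z|$, while the shift identity $\widehat{\rho f}(z) = \hat f(z+i)$ expresses the second through $\widehat{\chi u}(z+i,\cdot)$, known in a strictly larger half-plane. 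Inverting by the finite-meromorphic family $N(Q,z)^{-1}$, which has poles exactly on $\Specb(Q)$, and iterating in the $i$-shifts extends $\widehat{\chi u}(z,\cdot)$ meromorphically across the line $\Im z = -(\alpha_0 - \tfrac{w}{2})$ and, inductively, to all of $\C$. Collecting residues yields an expansion $u \sim \sum_{(z,k)\in\cE'} \rho^z(\log\rho)^k v_{z,k} \pmod{\CIdot(M;E)}$ for an index set $\cE'\subset\C\times\N_0$ with $\Re z \geq \alpha_0$ throughout.

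To identify the weight-$\alpha_0$ part of $\cE'$ with $\cE_0(P,\alpha_0)\subset\injSpecb(P)$, decompose $u = \sum_{\lambda_0} u_{\lambda_0} + \wt u \pmod{\CIdot(M;E)}$, where $\lambda_0$ ranges over Mellin parameters with $\Re(i\lambda_0) = \alpha_0$ occurring in $\cE'$, $u_{\lambda_0} := \sum_{k:(i\lambda_0,k)\in\cE'} \rho^{i\lambda_0}(\log\rho)^k v_{i\lambda_0,k}$, and $\wt u\in\cA^{\alpha_0+\delta}(M;E)$ for some $\delta>0$ collects the $\Re z>\alpha_0$ portion of the expansion. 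Writing $P = N(P) + \rho\tilde R_P$ with $\tilde R_P\in\Diffb^m$ and using $\rho D_\rho[\rho^{i\lambda_0}(\log\rho)^k] = \lambda_0\rho^{i\lambda_0}(\log\rho)^k - ik\rho^{i\lambda_0}(\log\rho)^{k-1}$ together with the Taylor expansion of $N(P,\lambda)$ at $\lambda_0$ (matching powers of $\log\rho$), one computes $N(P)u_{\lambda_0} = \rho^{i\lambda_0} q_{\lambda_0}(\log\rho)$ for a polynomial $q_{\lambda_0}$ whose coefficients depend triangularly on the $v_{i\lambda_0,k}$. The remaining contributions to $Pu$—namely $\rho\tilde R_P u_{\lambda_0}$, $P\wt u$, and $P$ applied to the higher-weight portion of $\cE'$—all lie in $\cA^{\alpha_0+\delta'}(M;F)$ for some $\delta'>0$. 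Combined with linear independence of the factors $\rho^{i\lambda_0}$ for distinct $\lambda_0$ on the critical line, the hypothesis $Pu\in\CIdot$ forces each $q_{\lambda_0}\equiv 0$, which by Definition~\ref{DefbSpecb} is equivalent to $u_{\lambda_0}\in F(P,\lambda_0)$ and hence $(i\lambda_0,k)\in\injSpecb(P)$ whenever $v_{i\lambda_0,k}\neq 0$; the $\Re z>\alpha_0$ exponents of $\cE'$ then serve as $\cE_+(P,\alpha_0)$. I expect the main obstacle to be the triangular-system bookkeeping identifying $q_{\lambda_0}\equiv 0$ with $u_{\lambda_0}\in F(P,\lambda_0)$, which though tautological from the definition requires careful organization of the $\log\rho$ powers across the action of $N(P,\lambda_0)$ and its $\lambda$-derivatives.
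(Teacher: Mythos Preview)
Your proposal is correct and follows the same three-stage architecture as the paper's proof. Two minor simplifications are available: in stage~(1), a single application of elliptic regularity for $P^*P$ at the initial weight $\beta$ (where $u\in\Hb^{s,\beta}$) already yields $u\in\Hb^{\infty,\beta}\subset\cA^{\beta-w/2}$, so your ladder of weights is unnecessary here (weight improvement is the job of stage~(2)); and in stage~(3), the paper stays on the Mellin side, observing that $N(P,z)\widehat{\chi u}(z)=\widehat{f_2}(z)$ with $\widehat{f_2}$ holomorphic across the critical line $\Im z=-\alpha_0$, so any pole of $\widehat{\chi u}$ there is automatically a point of $\injSpecb(P)$ of the correct order---this is equivalent to your physical-space computation via the residue correspondence~\eqref{EqAsFjRes} but sidesteps the triangular $\log\rho$ bookkeeping entirely.
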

\begin{proof}
  The first part of the proof follows \cite[Proposition~5.61]{MelroseAPS}. Let $f=P u$. Since $\CmI(M;E)$ is the union of all weighted b-Sobolev spaces, there exist $s,\beta\in\R$ so that $u\in\Hb^{s,\beta}(M,\mu;E)$. But then elliptic regularity for $P^*P u=P^*f$ implies $u\in\Hb^{\infty,\beta}(M,\mu;E)$, which by Sobolev embedding implies $u\in\cA^\alpha(M;E)$ for $\alpha=\beta-\frac{w}{2}$. If $\alpha_0$, as defined in the statement of the Lemma, equals $+\infty$, then $u\in\CIdot(M;E)$ and we are done. Otherwise, $u\in\cA^{\alpha_0-\eps}(M;E)$ for all $\eps>0$.

  We now work in the collar neighborhood $[0,1)_\rho\times\pa M$ of $\pa M$, and let $\chi\in\CIc([0,1)\times\pa M)$ denote a cutoff which is identically $1$ near $\rho=0$. Passing to the Mellin transform in the equation $N(P^*P)(\chi u)=f_1:=\chi P^*f+(N(P^*P)-P^*P)(\chi u)+[P^*P,\chi]u\in\cA^{\alpha_0-\eps+1}([0,1)\times\pa M;F|_{\pa M})$ gives
  \[
    N(P^*P,z)\wh{\chi u}(z) = \wh{f_1}(z),\qquad \Im z>-\alpha_0,
  \]
  where $\wh{f_1}(z)$ is holomorphic in $\Im z>-\alpha_0-1$, takes values in $\CI(\pa M;F|_{\pa M})$, and vanishes rapidly at real infinity. Since $N(P^*P,z)^{-1}$ is meromorphic and satisfies the bounds~\eqref{EqbEllEst}, we conclude that  $\wh{\chi u}(z)$ extends meromorphically to $\Im z>-\alpha_0-1$ as well and vanishes rapidly at real infinity. Shifting the contour in the inverse Mellin transform
  \[
    \chi u(\rho,y) = \frac{1}{2\pi} \int_{\Im z=-\alpha_0+\eps} \rho^{i z}N(P^*P,z)^{-1}\wh{f_1}(z)\,\dd z
  \]
  to $\Im z=-\alpha_0-1+\eps$ where $\eps\in(0,1)$ is such that no poles of $N(P^*P,z)^{-1}$ have $\Im z=-\alpha_0+\eps,-\alpha_0-1+\eps$, we conclude that $\chi u$ is polyhomogeneous modulo a remainder in $\cA^{\alpha_0-1+\eps}(M;E)$. Iterating this argument establishes the polyhomogeneity of $u$.

  To get precise information about the leading order part of the index set of $u$, we return to $P u=f$ and observe that
  \[
    N(P,z)\wh{\chi u}(z) = \wh{f_2}(z),\qquad \Im z>-\alpha_0-1,
  \]
  where $f_2=\chi f+(N(P)-P)(\chi u)+[P,\chi]u\in\cA^{\alpha_0-1+\eps}([0,1)\times\pa M;F|_{\pa M})$. Note that $\wh{f_2}(z)$ is holomorphic for $\Im z>-\alpha_0-1$. Therefore, if $z\in\C$ with $\Im z\in(-\alpha_0-1,-\alpha_0]$ is a pole of $\wh{\chi u}(z)$ of order $j\geq 1$, then $(i z,k-1)\in\injSpecb(P)$ for some $k\geq j$. This completes the proof.
\end{proof}

%%%%%%%%%%%%%%%%%%%%%%%%%%%%%%%%%%%%%%%%%%%%%%%%%%%%%%%%%%%%%%%%%%%%%%
\section{Solutions of underdetermined-elliptic PDE with sharp asymptotics}
\label{SAs}

We continue using the notation from the previous section. Let $P\in\Diffb^m(M;E,F)$.

%%%%%%%%%%%%%%%%%%%%%%%%%%%%%%%%%%%%%%%%%%%%%%%%%%
\subsection{Formal solutions}
\label{SsAsFormal}

For $j\in\N_0$ and $z_0\in\C$, we define
\begin{align}
\label{EqAsFj}
\begin{split}
  \hat F_j(P,z_0) &:= \Biggl\{ \tilde u(z)=\sum_{k=0}^j (z-z_0)^{-k-1}u_k \colon u_0,\ldots,u_j\in\CI(\pa M;E|_{\pa M}), \\
    &\hspace{12em} N(P,z)\tilde u(z)\ \text{is holomorphic at}\ z=z_0 \Biggr\},
\end{split} \\
  \hat F_{[j]}(P,z_0) &:= \{ {\rm L.o.t.}(\tilde u) \colon \tilde u\in\hat F_j(P,z_0) \} \subset \CI(\pa M;E|_{\pa M}); \nonumber
\end{align}
here we set ${\rm L.o.t.}(\tilde u(z)):=u_j$ for $\tilde u(z)$ as in~\eqref{EqAsFj}.\footnote{Thus, the map $\rm{L.o.t.}$ depends on $j$, but we do not make this explicit in the notation.} We moreover set $\hat F_{-1}(P,z_0)=\{0\}$ and $\hat F_{[-1]}(P,z_0)=\{0\}$. Thus, for $j\geq 0$, ${\rm L.o.t.}\colon\hat F_j(P,z_0)\to\hat F_{[j]}(P,z_0)$ is surjective with kernel $\hat F_{j-1}(P,z_0)$. We note that
\begin{equation}
\label{EqAsFjIncl}
  (z-z_0)\hat F_{j+1}(P,z_0)\subset\hat F_j(P,z_0)\subset\hat F_{j+1}(P,z_0),\qquad
  \hat F_{[j]}(P,z_0)\supset\hat F_{[j+1]}(P,z_0)
\end{equation}
for all $j$. For $\tilde u(z)=\sum_{k=0}^j(z-z_0)^{-k-1}u_k\in\hat F_j(P,z_0)$ we have
\begin{equation}
\label{EqAsFjRes}
  \Res_{z=z_0} \bigl(\rho^{i z}\tilde u(z)\bigr) = \sum_{k=0}^j \frac{i^k}{k!}\rho^{i z_0}(\log\rho)^k u_k \in \ker N(P),
\end{equation}
as follows by applying $N(P)$ under the integral sign to $\frac{1}{2\pi i}\oint_{z_0}\rho^{i z}\tilde u(z)\,\dd z$ where we integrate counterclockwise over a small circle around $z_0$. The space $\hat F_j(P,z_0)$ is thus isomorphic to the subspace
\begin{equation}
\label{EqAsFjPhys}
  F_j(P,z_0) := \left\{ u\in F(P,z_0),\ u=\sum_{k=0}^j\rho^{i z_0}(\log\rho)^k u_k \right\} = \ker_{F(P,z_0)} (\rho D_\rho-z_0)^{j+1}
\end{equation}
of $F(P,z_0)$ via $\hat F_j(P,z_0)\ni\tilde u\mapsto\Res_{z=z_0}(\rho^{i z}\tilde u(z))\in F_j(P,z_0)$. (In particular, if $\ord(P,z_0)=J<\infty$, then $\hat F_j(P,z_0)=\hat F_{J-1}(P,z_0)\cong F(P,z_0)$ for $j\geq J-1$.)

\begin{lemma}[Nondegenerate pairings]
\label{LemmaAsPair}
  Let $P\in\Diffb^m(M;E,F)$, and suppose that the principal symbol of $P$ is injective or surjective. Let $j\in\N_0$. Then the sesquilinear map
  \[
    b_j \colon \hat F_{[j]}(P,z_0) \times \hat F_{[j]}(P^*,\ol{z_0}+i w)\to\C,\qquad
    (u,u^*) \mapsto \la N(P,z)\tilde u(z),u^*\ra_{L^2(\pa M;F|_{\pa M})}\big|_{z=z_0},
  \]
  where $\tilde u\in \hat F_j(P,z_0)$ has ${\rm L.o.t.}(\tilde u)=u$, is well-defined (i.e.\ independent of the choice of $\tilde u$). Moreover:
  \begin{enumerate}
  \item\label{ItAsPairInj1} $b_j(u,u^*)=0$ for all $u^*\in \hat F_{[j]}(P^*,\ol{z_0}+i w)$ iff $u\in \hat F_{[j+1]}(P,z_0)$.
  \item\label{ItAsPairInj2} $b_j(u,u^*)=0$ for all $u\in \hat F_{[j]}(P,z_0)$ iff $u^*\in \hat F_{[j+1]}(P^*,\ol{z_0}+i w)$.
  \item\label{ItAsPairIso} The map $\hat F_{[j]}(P^*,\ol{z_0}+i w)/\hat F_{[j+1]}(P^*,\ol{z_0}+i w)\to(\hat F_{[j]}(P,z_0)/\hat F_{[j+1]}(P,z_0))^*$ induced by $u^*\mapsto b_j(-,u^*)$ is an (antilinear) isomorphism of finite-dimensional vector spaces.
  \end{enumerate}
\end{lemma}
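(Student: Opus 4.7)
The central tool is a symmetry identity for $b_j$:
\[
\langle N(P,z)\tilde u(z)|_{z=z_0},\,u_j^*\rangle \;=\; \langle u_j,\,N(P^*,\zeta)\tilde u^*(\zeta)|_{\zeta=\bar z_0+iw}\rangle,
\]
valid for all $\tilde u\in\hat F_j(P,z_0)$ and $\tilde u^*\in\hat F_j(P^*,\bar z_0+iw)$. I would prove it by expanding $N(P,z)=\sum_l(z-z_0)^l A_l$ and $N(P^*,\zeta)=\sum_l(\zeta-\bar z_0-iw)^l\Lambda_l$, using the adjoint identity $N(P,z)^*=N(P^*,\bar z+iw)$ from Lemma~\ref{LemmabEll} to identify $\Lambda_l=A_l^*$, and then exploiting the Laurent constraints defining $\hat F_j(P,z_0)$ and $\hat F_j(P^*,\bar z_0+iw)$ to re-express $A_{k+1}u_j$ and $\Lambda_{k+1}u^*_j$ in terms of lower-index data; careful re-indexing matches the two sides. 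Well-definedness of $b_j$ is then immediate: each expression visibly depends only on the L.o.t.\ of the opposing Laurent tail.

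The ``if'' direction of~\eqref{ItAsPairInj1} is a brief direct computation: given an extension $\tilde u_{(j+1)}\in\hat F_{j+1}(P,z_0)$ of $u$, the singular part $\tilde u$ of $(z-z_0)\tilde u_{(j+1)}(z)$ lies in $\hat F_j(P,z_0)$ with L.o.t.\ $u$, and $N(P,z)\tilde u(z)|_{z=z_0}=-N(P,z_0)u'_0$ where $u'_0$ is the innermost Laurent coefficient; then $b_j(u,u^*)=-\langle u'_0,N(P,z_0)^*u^*\rangle=0$ because $\hat F_{[j]}(P^*,\bar z_0+iw)\subseteq\hat F_{[0]}(P^*,\bar z_0+iw)=\ker N(P,z_0)^*$ (by iterating \eqref{EqAsFjIncl}). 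The ``if'' direction of~\eqref{ItAsPairInj2} follows by the symmetry identity with $P,P^*$ interchanged. For the converses and part~\eqref{ItAsPairIso}, the key structural point is that the shift $\sigma\colon\hat F_{j+1}\to\hat F_j$ sending $\tilde u_{(j+1)}$ to the singular part of $(z-z_0)\tilde u_{(j+1)}(z)$ has image precisely $\{\tilde u\in\hat F_j\colon B_0(\tilde u)\in\operatorname{range}N(P,z_0)\}$, where $B_0(\tilde u):=N(P,z)\tilde u(z)|_{z=z_0}$. Consequently $u\in\hat F_{[j+1]}(P,z_0)$ iff, for some (equivalently any) lift $\tilde u_0\in\hat F_j$ of $u$, the class $[B_0(\tilde u_0)]$ in $\ker N(P,z_0)^*\cong\CI(\pa M;F|_{\pa M})/\operatorname{range}N(P,z_0)$ lies in the subspace $W_P:=\{[B_0(\tilde v)]:\tilde v\in\hat F_{j-1}(P,z_0)\}$. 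The well-definedness step gives $W_P\subseteq\hat F_{[j]}(P^*,\bar z_0+iw)^{\perp}$ (orthogonal inside $\ker N(P,z_0)^*$); the reverse inclusion would complete~\eqref{ItAsPairInj1}, the symmetric argument on the $P^*$ side handles~\eqref{ItAsPairInj2}, and~\eqref{ItAsPairIso} then reduces to finite-dimensional linear algebra on the quotients.

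The hard part is the reverse inclusion $\hat F_{[j]}(P^*,\bar z_0+iw)^{\perp}\subseteq W_P$. My approach would be a dimension count: the short exact sequences $0\to\hat F_{j-1}\to\hat F_j\to\hat F_{[j]}\to 0$ together with $\ker\sigma\cong\ker N(P,z_0)$ give $\dim W_P=\dim\ker N(P,z_0)-\dim\hat F_{[j]}(P,z_0)$, and matching this against $\dim\hat F_{[j]}(P^*,\bar z_0+iw)^{\perp}=\dim\ker N(P,z_0)^*-\dim\hat F_{[j]}(P^*,\bar z_0+iw)$ boils down to the index identity $\ind N(P,z_0)=\dim\hat F_{[j]}(P,z_0)-\dim\hat F_{[j]}(P^*,\bar z_0+iw)$. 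Some care is needed when $P$ has only injective or only surjective principal symbol (so that one of the kernels is infinite-dimensional), which I would handle by working relative to the genuinely elliptic $N(P^*P,z)$ or $N(PP^*,z)$ and carrying out the induction on $j$ simultaneously on both sides so as to track the analogous Laurent data of $P$ and $P^*$ in parallel.
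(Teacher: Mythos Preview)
Your symmetry identity and its consequences (well-definedness of $b_j$, the ``if'' directions of \eqref{ItAsPairInj1}--\eqref{ItAsPairInj2}) are correct; indeed the symmetry is precisely what the paper records as~\eqref{EqAsPairAdj}. The characterization of $\hat F_{[j+1]}(P,z_0)$ via the subspace $W_P\subset\operatorname{coker}N(P,z_0)$ is also correct and is a nice reformulation.

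The gap is in the dimension count. Your formula $\dim W_P=\dim\ker N(P,z_0)-\dim\hat F_{[j]}(P,z_0)$ is $\infty-\infty$ in the underdetermined elliptic case (the case of primary interest here), since then $\ker N(P,z_0)$ and every $\hat F_{[j]}(P,z_0)$ are infinite-dimensional. Your suggested remedy of passing to $N(P P^*,z)$ does not obviously help: the Laurent spaces $\hat F_j(P P^*,z_0)$ are not the $\hat F_j(P,z_0)$, and it is unclear how to recover the needed dimension relation from the elliptic theory for $P P^*$. A workable repair of your approach is to compute $\dim W_P$ \emph{inductively}: with $W'_P:=[B_0](\hat F_{j-2}(P,z_0))$ one checks directly that the image of $\sigma(\hat F_j)$ in $\hat F_{j-1}/\hat F_{j-2}=\hat F_{[j-1]}(P,z_0)$ is exactly $\hat F_{[j]}(P,z_0)$, whence $W_P/W'_P\cong\hat F_{[j-1]}(P,z_0)/\hat F_{[j]}(P,z_0)$, which is finite-dimensional by part~\eqref{ItAsPairIso} at level $j-1$; then $\dim W_P=\dim W'_P+\dim(\hat F_{[j-1]}(P)/\hat F_{[j]}(P))$ closes the induction without ever subtracting infinities.

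The paper sidesteps all of this. Its inductive step for the ``only if'' direction of~\eqref{ItAsPairInj1} is constructive rather than dimension-theoretic: given $u$ with $b_j(u,\cdot)\equiv 0$, one uses the (already proved) isomorphisms in~\eqref{ItAsPairIso} at levels $j-1,j-2,\ldots,0$ to successively add elements of $\hat F_{j-1}(P,z_0),\hat F_{j-2}(P,z_0),\ldots$ to the lift $\tilde u$ so that eventually $\langle N(P,z)\tilde u(z)|_{z=z_0},u^*\rangle=0$ for all $u^*\in\ker N(P,z_0)^*$, at which point $B_0(\tilde u)\in\operatorname{ran}N(P,z_0)$ and the extension to $\hat F_{j+1}$ is immediate. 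This argument is insensitive to whether $\ker N(P,z_0)$ is finite- or infinite-dimensional, which is why the paper treats the injective-symbol and surjective-symbol cases uniformly.
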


This is closely related to \cite[Proposition~6.2]{MelroseAPS}. When $P$ and thus $N(P,z)$ is un\-der\-de\-ter\-mined-elliptic, then $\hat F_{[0]}(P,z_0)=\ker_{\CI(\pa M;E|_{\pa M})}N(P,z_0)$ is infinite-dimensional for all $z_0\in\C$ (and thus $\rank(P,z_0)=\infty$); see \cite{BourguignonEbinMarsdenPsdoSurj,BergerEbinTensors} and also the proof of Theorem~\ref{ThmAsIndetCon} below. Since for $J=\ord(P^*,\ol{z_0}+i w)$ we have $\hat F_{[j]}(P^*,\ol{z_0}+i w)=0$ for all $j\geq J$, part~\eqref{ItAsPairIso} of the Lemma implies that the space $\hat F_{[j]}(P,z_0)$ does not depend on $j$ for $j\geq J-1$ and has infinite dimension. Thus, $\ord(P,z_0)=\infty$ for all $z_0\in\C$.

\begin{proof}[Proof of Lemma~\usref{LemmaAsPair}]
  We consider the case that the principal symbol of $P$ is surjective; the injective case is analogous. Assuming that $J=\ord(P^*,\ol{z_0}+i w)\geq 1$, we then have a chain
  \[
    0 = \hat F_{[J]}(P^*,\ol{z_0}+i w) \subsetneq \hat F_{[J-1]}(P^*,\ol{z_0}+i w) \subseteq \cdots \subseteq \hat F_{[0]}(P^*,\ol{z_0}+i w) = \ker N(P^*,\ol{z_0}+i w)
  \]
  of finite-dimensional subspaces of $\CI(\pa M;F|_{\pa M})$.

  If $j=0$, then $\tilde u(z):=(z-z_0)^{-1}u$ is the unique choice of $\tilde u$, and one has $b_0(u,u^*)=\la\pa_z N(P,z_0)u,u^*\ra$. This vanishes for all $u^*\in \hat F_{[0]}(P^*,\ol{z_0}+i w)=\ker N(P^*,\ol{z_0}+i w)=\ker N(P,z_0)^*$ if and only if $\pa_z N(P,z_0)u\in\CI(\pa M;F|_{\pa M})$ lies in the range of $N(P,z_0)$ on $\CI(\pa M;E|_{\pa M})$; this uses that $N(P,z_0)$ has injective or surjective principal symbol. But the existence of $u_1\in\CI(M;E|_{\pa M})$ with $\pa_z N(P,z_0)u=-N(P,z_0)u_1$ is equivalent to $(z-z_0)^{-2}u+(z-z_0)^{-1}u_1\in\hat F_1(P,z_0)$, so $u\in\hat F_{[1]}(P,z_0)$. This proves~\eqref{ItAsPairInj1}. The argument for~\eqref{ItAsPairInj2} is analogous. Since~\eqref{ItAsPairInj1} implies that the map $\hat F_{[0]}(P,z_0)/\hat F_{[1]}(P,z_0)\to\hat F_{[0]}(P^*,\ol{z_0}+i w)^*$ induced by $b_1$ is injective, the domain is finite-dimensional (since the codomain is), and therefore its adjoint is surjective. This proves the surjectivity of the map in~\eqref{ItAsPairIso}, and its injectivity follows from~\eqref{ItAsPairInj2}.

  Suppose now we have established the Lemma for $j-1\geq 0$ in place of $j$. If $\tilde u,\tilde u'\in \hat F_j(P,z_0)$ have the same leading order term $u\in \hat F_{[j]}(P,z_0)$, then $\tilde u-\tilde u'\in \hat F_{j-1}(P,z_0)$; thus, for $u^*\in \hat F_{[j]}(P^*,\ol{z_0}+i w)$ we have $\la N(P,z)(\tilde u-\tilde u'),u^*\ra=b_{j-1}({\rm L.o.t.}(\tilde u-\tilde u'),u^*)=0$ by part~\eqref{ItAsPairInj2}. This shows that $b_j$ is well-defined.

  Given $u\in \hat F_{[j]}(P,z_0)$, choose now $\tilde u\in \hat F_j(P,z_0)$ with ${\rm L.o.t.}(\tilde u)=u=:u_j$. Suppose $b_j(u,u^*)=0$ for all $u^*\in \hat F_{[j]}(P^*,\ol{z_0}+i w)$. Then the map
  \begin{equation}
  \label{EqAsPairMap}
    \hat F_{[j-1]}(P^*,\ol{z_0}+i w) \ni u^* \mapsto \la N(P,z)\tilde u(z),u^*\ra|_{z=z_0} \in \C
  \end{equation}
  induces an element of $(\hat F_{[j-1]}(P^*,\ol{z_0}+i w)/\hat F_{[j]}(P^*,\ol{z_0}+i w))^*$; by part~\eqref{ItAsPairIso}, we may thus modify $\tilde u$ via addition of an suitable element of $\hat F_{j-1}(P,z_0)$ so as to ensure that~\eqref{EqAsPairMap} vanishes. Allowing next in~\eqref{EqAsPairMap} inputs $u^*\in \hat F_{[j-2]}(P^*,\ol{z_0}+i w)$, we obtain an element of $(\hat F_{[j-2]}(P^*,\ol{z_0}+i w)/\hat F_{[j-1]}(P^*,\ol{z_0}+i w))^*$, which upon adding a suitable element of $\hat F_{j-2}(P,z_0)$ to $\tilde u$ we can arrange to be $0$; and so on. Ultimately, we obtain a new $\tilde u(z)=\sum_{k=0}^j(z-z_0)^{-k-1}u_k\in \hat F_j(P,z_0)$, still with ${\rm L.o.t.}(\tilde u)=u$, so that $\la N(P,z)\tilde u(z),u^*\ra|_{z=z_0}=0$ for all $u^*\in \hat F_{[0]}(P^*,\ol{z_0}+i w)=\ker N(P^*,\ol{z_0}+i w)$. But this means that there exists $u_{-1}\in\CI(\pa M;E|_{\pa M})$ so that
  \[
    \sum_{k=0}^j \frac{1}{(k+1)!}\pa_z^{k+1}N(P,z_0)u_k = -N(P,z_0)u_{-1}.
  \]
  Therefore $\tilde v(z):=\sum_{k=0}^{j+1}(z-z_0)^{-k-1}u_{k-1}\in \hat F_{j+1}(P,z_0)$, and hence $u=u_j\in \hat F_{[j+1]}(P,z_0)$. Conversely, given $u\in \hat F_{[j+1]}(P,z_0)$, take $\tilde v(z)\in \hat F_{j+1}(P,z_0)$ of this form with ${\rm L.o.t.}(\tilde v)=u$; then $b_j(u,u^*)=\la N(P,z_0)(z-z_0)\tilde v(z),u^*\ra|_{z=z_0}=0$. This establishes part~\eqref{ItAsPairInj1}.

  Given $u^*\in \hat F_{[j]}(P^*,\ol{z_0}+i w)$, let $\tilde u^*(z)=\sum_{k=0}^j(z-(\ol{z_0}+i w))^{-k-1}u^*_k\in \hat F_j(P^*,\ol{z_0}+i w)$ with ${\rm L.o.t.}(\tilde u^*)=u^*_j=u^*$. Since $N(P^*,z)\tilde u^*(z)$ is holomorphic at $z=\ol{z_0}+i w$, the expression $N(P^*,\ol{z}+i w)\tilde u^*(\ol{z}+i w)$ is anti-holomorphic at $z=z_0$. Thus, for $\tilde u(z)\in \hat F_j(P,z_0)$ with ${\rm L.o.t.}(\tilde u)=u$, we obtain the symmetric expression
  \begin{align}
    b_j(u,u^*) &= \la N(P,z)\tilde u(z),(\ol{z}-\ol{z_0})^j\tilde u^*(\bar z+i w)\ra|_{z=z_0} \nonumber\\
  \label{EqAsPairAdj}
      &= \bigl((z-z_0)^j\la\tilde u(z),N(P,z)^*\tilde u^*(\ol{z}+i w)\ra\bigr)|_{z=z_0} \\
      &=\la u,N(P^*,\ol{z}+i w)\tilde u^*(\ol{z}+i w)\ra|_{z=z_0} \nonumber\\
      &= \la u,N(P^*,z)\tilde u^*(z)\ra|_{z=\ol{z_0}+i w}. \nonumber
  \end{align}
  Repeating the above arguments \emph{mutatis mutandis} shows~\eqref{ItAsPairInj2}. The statement~\eqref{ItAsPairIso} follows from~\eqref{ItAsPairInj1}--\eqref{ItAsPairInj2} and the finite-dimensionality of the $\hat F_{[j]}(P^*,\ol{z_0}+i w)$.
\end{proof}

\begin{cor}[Solvability of the normal operator]
\label{CorAsNorm}
  Suppose $P\in\Diffb^m(M;E,F)$ has surjective principal symbol. Let $z_0\in\C$ and put $J=\ord(P^*,\ol{z_0}+i w)$. Let $k\in\N_0$, and let $f_0,\ldots,f_k\in\CI(\pa M;F|_{\pa M})$. Then there exist $u_0,\ldots,u_{k+J}\in\CI(\pa M;E|_{\pa M})$, depending linearly and continuously on $(f_0,\ldots,f_k)$, so that $P u=f$ where
  \[
    u(\rho,y) = \sum_{j=0}^{J+k} \rho^{i z_0}(\log\rho)^j u_j(y),\qquad
    f(\rho,y) = \sum_{j=0}^k \rho^{i z_0}(\log\rho)^j f_j(y).
  \]
\end{cor}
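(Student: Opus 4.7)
The plan is as follows. By the residue identification~\eqref{EqAsFjRes}, an expansion $u = \sum_{j=0}^{J+k}\rho^{i z_0}(\log\rho)^j u_j$ (resp.\ $f$) is in bijective correspondence with a Laurent polynomial $\tilde u(z) = \sum_{j=0}^{J+k}(z-z_0)^{-j-1}\tilde u_j$ (resp.\ $\tilde f$ of pole order $\leq k+1$) at $z_0$, and the equation $Pu = f$, interpreted via $N(P)$ in the collar, is equivalent to requiring $N(P,z)\tilde u(z) - \tilde f(z)$ to be holomorphic at $z=z_0$. So given $\tilde f$ of pole order $\leq k+1$, I need to find $\tilde u$ of pole order $\leq J+k+1$ with this property, continuously in $\tilde f$.

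First I construct a preliminary particular solution. Since $P$ has surjective principal symbol, $P^*$ has injective principal symbol, and $Q(z) := N(P,z)\,N(P^*, z+iw)$ is an elliptic holomorphic family (of order $2m$) on $\pa M$; hence $Q(z)^{-1}$ is finite-meromorphic near $z_0$. The family $R(z) := N(P^*, z+iw)\,Q(z)^{-1}$ satisfies $N(P,z)R(z)=\mathrm{Id}$, so is a meromorphic right inverse of $N(P,z)$. Taking the Laurent polynomial part of $R(z)\tilde f(z)$ at $z_0$ yields $\tilde u^\sharp$ of some finite pole order $K''$ with $N(P,z)\tilde u^\sharp(z) \equiv \tilde f(z)$ modulo holomorphic functions.

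Next I reduce the pole order. Suppose $K'' \geq J+k+2$ and write $\tilde u^\sharp(z) = \sum_{j=0}^{K''-1}(z-z_0)^{-j-1}\tilde u_j$. Vanishing of the coefficients of $(z-z_0)^{-r}$ in $N(P,z)\tilde u^\sharp(z) - \tilde f(z)$ for $r = k+2,\ldots,K''$ is, after the reindexing $j'=j-k-1$, equivalent to $\tilde w(z) := \sum_{j'=0}^{K''-k-2}(z-z_0)^{-j'-1}\tilde u_{j'+k+1}$ lying in $\hat F_{K''-k-2}(P,z_0)$; therefore $\tilde u_{K''-1} = {\rm L.o.t.}(\tilde w) \in \hat F_{[K''-k-2]}(P,z_0)$. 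Since $\hat F_{[j]}(P^*,\bar{z_0}+iw)=0$ for $j\geq J$, iterative application of Lemma~\ref{LemmaAsPair}\eqref{ItAsPairIso} gives $\hat F_{[j]}(P,z_0) = \hat F_{[J]}(P,z_0)$ for all $j\geq J$; as $K''-k-2\geq J$ and $K''-1\geq J$, we conclude $\tilde u_{K''-1}\in\hat F_{[K''-1]}(P,z_0)$. By the very definition of the latter there exists $\tilde v\in\hat F_{K''-1}(P,z_0)$, i.e.\ a Laurent polynomial of pole order $K''$ with $N(P,z)\tilde v(z)$ holomorphic, whose highest-order coefficient is $\tilde u_{K''-1}$. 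Replacing $\tilde u^\sharp$ by $\tilde u^\sharp - \tilde v$ strictly reduces the pole order while preserving the equation modulo holomorphic. Iterating drops the pole order to $\leq J+k+1$.

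The dependence $\tilde f \mapsto \tilde u^\sharp$ is linear and continuous from the structure of $R(z)$. For the reduction, I fix once and for all a continuous linear right inverse of $N(P,z_0)$ on its closed range, which exists by Remark~\ref{RmkRightInv} applied to $N(P,z_0)\in\Diff^m(\pa M;E|_{\pa M},F|_{\pa M})$, right elliptic on the closed manifold $\pa M$, and use it to solve the chain equations determining $\tilde v$ from $\tilde u_{K''-1}$ step by step. The main obstacle is the pole-order reduction: it rests on identifying the Jordan-chain condition with membership in $\hat F_{[K''-k-2]}(P,z_0)$ together with the stabilization of the flag $\hat F_{[j]}(P,z_0)$ at $j=J$, which in turn stems from the infinite-dimensionality of $\hat F_{[\infty]}(P,z_0)$ forced by the underdetermined ellipticity of $P$.
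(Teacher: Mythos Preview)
Your approach is correct in outline and takes a genuinely different route from the paper's proof. The paper reduces to $k=0$ by induction and then, for a single pole $f(z)=(z-z_0)^{-1}f_0$, builds $\tilde u\in(z-z_0)^{-1}\hat F_{J-1}(P,z_0)+(z-z_0)^{-1}\CI$ from scratch: it uses the pairings $b_{J-1},b_{J-2},\ldots,b_0$ of Lemma~\ref{LemmaAsPair}\eqref{ItAsPairIso} to successively adjust the Laurent coefficients so that the residue $f_0-(N(P,z)\tilde u(z))|_{z=z_0}$ becomes orthogonal to $\ker N(P,z_0)^*$, after which a single application of a right inverse of $N(P,z_0)$ finishes. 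Your route instead produces a solution with uncontrolled pole order via a meromorphic right inverse $R(z)$ and then trims the excess using the stabilization $\hat F_{[j]}(P,z_0)=\hat F_{[J]}(P,z_0)$ for $j\geq J$. This is more conceptual (the existence of $R$ is a clean consequence of analytic Fredholm theory applied to the elliptic family $Q(z)$) and treats all $k$ at once, whereas the paper's argument is more hands-on but never over-solves.

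There is one soft spot in your continuity argument. Having identified $\tilde u_{K''-1}\in\hat F_{[K''-1]}(P,z_0)$, you need a \emph{continuous linear} lift to $\tilde v\in\hat F_{K''-1}(P,z_0)$. Your proposal to ``solve the chain equations step by step'' via a fixed right inverse $S$ of $N(P,z_0)$ does not quite work: after setting $v_{K''-2}=S(-\partial_z N(P,z_0)\tilde u_{K''-1})$, the next right-hand side $-\partial_z N(P,z_0)v_{K''-2}-\tfrac12\partial_z^2 N(P,z_0)\tilde u_{K''-1}$ need not lie in $\ran N(P,z_0)$, because the ambiguity $v_{K''-2}-v_{K''-2}'\in\ker N(P,z_0)$ (relative to any chain $v'_\bullet$ that does extend) has no reason to lie in $\hat F_{[1]}(P,z_0)$. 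The fix is to follow the modification procedure in the proof of Lemma~\ref{LemmaAsPair}\eqref{ItAsPairInj1}: at each stage one first corrects the partial chain by elements of $\hat F_{j'}(P,z_0)$ for decreasing $j'$, solving the finite-dimensional linear systems coming from the $b_{j'}$, before applying $S$. Since the $b_{j'}$ act between finite-dimensional quotients, this yields a continuous linear section, and your argument then goes through. (A minor aside: the stabilization follows directly from Lemma~\ref{LemmaAsPair}\eqref{ItAsPairIso} together with $\hat F_{[j]}(P^*,\bar z_0+iw)=0$ for $j\geq J$; strict underdeterminedness plays no role here, only right ellipticity.)
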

\begin{proof}
  Relabeling and rescaling $f_j$ and $u_j$, we must show, in view of~\eqref{EqAsFjRes}, the existence of $u_j$ so that $N(P,z)u(z)-f(z)$ is holomorphic at $z=z_0$, where $f(z)=\sum_{j=0}^k(z-z_0)^{-j-1}f_j$ is given, and we seek $u(z)=\sum_{j=0}^{J+k}(z-z_0)^{-j-1}u_j$. By a simple induction on $k$, it suffices to prove this for $k=0$.

  We claim that there exists $\tilde u\in\hat F_{J-1}(P,z_0)$ (depending continuously on $f_0$) so that
  \begin{equation}
  \label{EqAsNorm}
    \la N(P,z)\tilde u(z),u^*\ra_{L^2(\pa M;F|_{\pa M})}|_{z=z_0}=\la f_0,u^*\ra_{L^2(\pa M;F|_{\pa M})}
  \end{equation}
  for all $u^*\in\ker (P,z_0)^*=\ker N(P^*,\ol{z_0}+i w)=\hat F_{[0]}(P^*,\ol{z_0}+i w)$; using such a $\tilde u$, we can then find $u_0\in\CI(\pa M;E|_{\pa M})$ (depending continuously on $f_0$, cf.\ Remark~\ref{RmkRightInv}) with $(f_0-N(P,z)\tilde u(z))|_{z=z_0}=N(P,z_0)u_0$ and conclude that $N(P,z)((z-z_0)^{-1}\tilde u(z)+(z-z_0)^{-1}u_0)-(z-z_0)^{-1}f_0$ is holomorphic at $z_0$.

  Requiring~\eqref{EqAsNorm} merely for all $u^*\in\hat F_{[J-1]}(P^*,\ol{z_0}+i w)$ is equivalent to $b_{J-1}(u_{J-1},-)=\la f_0,-\ra\in(\hat F_{[J-1]}(P^*,\ol{z_0}+i w))^*$ where $u_{J-1}={\rm L.o.t.}(\tilde u)$. By Lemma~\ref{LemmaAsPair}\eqref{ItAsPairIso}, this has a (unique) solution $u_{J-1}\in\hat F_{[J-1]}(P,z_0)$. Pick $\tilde u_{J-1}\in\hat F_{J-1}(P,z_0)$ with ${\rm L.o.t.}(\tilde u_{J-1})=u_{J-1}$, and let $f_1:=f_0-(N(P,z)\tilde u_{J-1}(z))|_{z=z_0}$. Consider then the equation
  \begin{equation}
  \label{EqAsNorm2}
    \la N(P,z)\tilde u_{J-2}(z),u^*\ra|_{z=z_0} = \la f_1,u^*\ra.
  \end{equation}
  For all $u^*\in\hat F_{[J-1]}(P^*,\ol{z_0}+i w)$, the left, resp.\ right hand side vanishes when $\tilde u_{J-2}\in\hat F_{J-2}(P,z_0)$ in view of Lemma~\ref{LemmaAsPair}\eqref{ItAsPairInj2} with $j=J-2$, resp.\ by construction of $f_1$. Solving~\eqref{EqAsNorm2} for $u^*\in\hat F_{[J-2]}(P^*,\ol{z_0}+i w)$ is thus equivalent to solving
  \[
    b_{J-2}(u_{J-2},-)=\la f_1,-\ra\in\bigl(\hat F_{[J-2]}(P^*,\ol{z_0}+i w)/\hat F_{[J-1]}(P^*,\ol{z_0}+i w)\bigr)^*
  \]
  for $u_{J-2}={\rm L.o.t.}(\tilde u_{J-2})\in\hat F_{[J-2]}(P,z_0)$ (where $\tilde u_{J-2}\in\hat F_{J-2}(P,z_0)$). Lemma~\ref{LemmaAsPair}\eqref{ItAsPairIso} provides us with a solution $u_{J-2}$ which is unique modulo $\hat F_{[J-1]}(P,z_0)$. We then set $f_2=f_1-(N(P,z)\tilde u_{J-2}(z))|_{z=z_0}$, etc. In this manner, we obtain $\tilde u_j\in F_j(P,z_0)$ for $j=J-1,\ldots,0$, and find for $\tilde u(z)=\sum_{j=0}^{J-1}\tilde u_j(z)$ that $f_0-(N(P,z)\tilde u(z))|_{z=z_0}$ is orthogonal to all $u^*\in\hat F_{[0]}(P^*,\ol{z_0}+i w)$, as required in~\eqref{EqAsNorm}. This completes the proof.
\end{proof}

\begin{prop}[Formal solution]
\label{PropAs}
  Let $P\in\Diffb^m(M;E,F)$ be an operator with surjective principal symbol. Let $\cF\subset\C\times\N_0$ be an index set. Define the index set $\cE(P,\cF)\subset\C\times\N_0$ by
  \begin{equation}
  \label{EqAsInd}
    \cE(P,\cF) := \left\{ (z+j,k+\ell) \colon (z,k)\in\cF,\ j\in\N_0,\ \ell\leq\sum_{q=0}^j \ord\left(P^*,\ol{-i(z+q)}+i w\right)\right\}.
  \end{equation}
  Then for all $f\in\cA_\phg^\cF(M;F)$, there exists $u\in\cA_\phg^{\cE(P,\cF)}(M;E)$, depending continuously on $f$, so that $P u-f\in\CIdot(M;F)$.
\end{prop}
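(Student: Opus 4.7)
The plan is to construct $u$ by cancelling the expansion of $f$ one real-part level at a time, repeatedly invoking Corollary~\ref{CorAsNorm} to solve the normal-operator equation at each level, and then taking a Borel-type asymptotic sum of the resulting pieces (as recalled after Definition~\ref{DefAs}). The key algebraic observation driving the induction is the splitting $P = N(P) + \rho Q$ with $Q \in \Diffb^m$ on the collar $\cU$: since $\rho D_\rho$ preserves log powers, inserting a term $\rho^z(\log\rho)^k v(y)$ into $u$ contributes at the exact level $\Re z$ only through $N(P)$, while $(\rho Q)(\chi \rho^z(\log\rho)^k v)$ has expansion supported at levels $\geq \Re z + 1$ with log powers at most $k$.

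I would list the real parts $R_0 < R_1 < \ldots$ occurring in $\cE(P,\cF)$ (a discrete sequence tending to $+\infty$) and build $u$ by induction on $n$. Starting from $u_{-1}=0$, suppose $u_{n-1}$ is polyhomogeneous with index set contained in $\{(z,k)\in\cE(P,\cF) : \Re z \leq R_{n-1}\}$ and that $f - P u_{n-1}$ has expansion supported at real parts $\geq R_n$. Extract the finitely many exponents $z_\alpha$ with $\Re z_\alpha = R_n$ appearing in that expansion, write the corresponding contribution as $\sum_\alpha \rho^{z_\alpha}\sum_k (\log\rho)^k h_{\alpha,k}$, and apply Corollary~\ref{CorAsNorm} at $z_{0,\alpha} = -i z_\alpha$ to obtain, continuously in the data, coefficients $v_{\alpha,j}(y)$, $0 \leq j \leq k_\alpha + J_\alpha$, with $J_\alpha := \ord(P^*, \ol{-iz_\alpha}+iw)$ and $k_\alpha := \max\{k : h_{\alpha,k}\neq 0\}$, satisfying $N(P)\bigl(\sum_j \rho^{z_\alpha}(\log\rho)^j v_{\alpha,j}\bigr) = \sum_k \rho^{z_\alpha}(\log\rho)^k h_{\alpha,k}$. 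Set $u_n := u_{n-1} + \chi \sum_{\alpha,j} \rho^{z_\alpha}(\log\rho)^j v_{\alpha,j}$. Expanding $P(\chi v) = \chi N(P)v + \chi \rho Q v + [P,\chi] v$, the $\chi N(P)$-contribution cancels the level-$R_n$ part of $f - P u_{n-1}$ up to a $\CIdot$-error (supported away from $\pa M$), while $\chi \rho Q v$ lies at levels $\geq R_n + 1$ with log powers preserved, and $[P,\chi]v$ is in $\CIdot$; this closes the induction.

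An inner induction exploiting the propagation property of $\rho Q$ noted above shows that for every $(z,k) \in \cF$ and every $j \geq 0$ with $\Re z + j \leq R_n$, the maximal log power of $u_n$ at exponent $z+j$ is bounded by $k + \sum_{q=0}^j \ord(P^*, \ol{-i(z+q)}+iw)$, matching \eqref{EqAsInd}: each solve at an intermediate level $z+q$ adds at most $\ord(P^*, \ol{-i(z+q)}+iw)$ log powers on top of the count inherited from below. Letting $w_n := u_n - u_{n-1} \in \cA_\phg^{\cE_n}(M;E)$ for a finite piece $\cE_n$ of $\cE(P,\cF)$ at level $R_n$ and appealing to the asymptotic-summation construction from Definition~\ref{DefAs}, one obtains $u \in \cA_\phg^{\cE(P,\cF)}(M;E)$ with $u \sim \sum_n w_n$ and $u - u_n \in \cA^{R_{n+1}}(M;E)$ for each $n$; hence $Pu - f = P(u - u_n) + (Pu_n - f) \in \cA^{R_{n+1}-m}(M;F)$ for every $n$, forcing $Pu - f \in \CIdot(M;F)$. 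Continuity of $f \mapsto u$ will follow from the continuity built into Corollary~\ref{CorAsNorm} together with a standard continuous version of Borel summation. The main obstacle I anticipate is the log-power accounting: when several pairs $(z,k),(z',k') \in \cF$ have propagated-error chains that meet at a common exponent $z+j = z'+j'$, one must verify that the sharp bound \eqref{EqAsInd} is preserved, which requires a carefully formulated inner induction rather than any new analytic input.
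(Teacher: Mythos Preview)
Your proposal is correct and follows essentially the same approach as the paper: solve the normal operator equation via Corollary~\ref{CorAsNorm}, use that $P-N(P)\in\rho\Diffb^m$ to push the error one order deeper, iterate, and take an asymptotic sum. The paper avoids your ``main obstacle'' (the log-power accounting when several chains meet) by first reducing, via linearity and a Borel argument, to the case of a \emph{single} term $f=\chi\rho^z(\log\rho)^k f_{(z,k)}$, after which the bound~\eqref{EqAsInd} is immediate from the iteration; you could adopt the same reduction and dispense with the inner induction entirely. (Minor slip: since $P\in\Diffb^m$ preserves conormal weights, $P(u-u_n)\in\cA^{R_{n+1}}$ rather than $\cA^{R_{n+1}-m}$, but this does not affect the conclusion.)
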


This generalizes (and in the elliptic setting also sharpens) \cite[Lemma~5.44]{MelroseAPS}. In the applications discussed in~\S\ref{SA}, the orders of $P^*$ are zero at all but finitely many points in $\C$, and therefore $\cE$ is only a modest enlargement of $\cF$. We conjecture that for generic $P,f$, the index set $\cE$ is the smallest one for which the conclusion holds.

\begin{proof}[Proof of Proposition~\usref{PropAs}]
  Since the result is local near $\pa M$, we may work in a collar neighborhood of $\pa M$ and assume that $f=\chi f$ where $\chi\in\CIc([0,1)_\rho\times\pa M)$. Moreover, by a Borel lemma argument, it suffices to consider the case that $f$ is replaced by any individual term of its polyhomogeneous expansion, so $f=\chi f_0$ where $f_0(\rho,y)=\rho^z(\log\rho)^k f_{(z,k)}(y)$ with $f_{(z,k)}\in\CI(\pa M;F|_{\pa M})$. By Corollary~\ref{CorAsNorm}, there exist $u_{0,0},\ldots,u_{0,k+j_0}\in\CI(\pa M;E|_{\pa M})$, $j_0=\ord(P^*,\ol{-i z}+i w)$, so that $N(P)u_0=f_0$ for $u_0=\sum_{j=0}^{k+j_0} \rho^z(\log\rho)^j u_{0,j}$. Therefore,
  \[
    f-P(\chi u_0)=\chi(f_0-P u_0) - [P,\chi]u_0 = -\chi(P-N(P))u_0 - [P,\chi]u_0
  \]
  is the sum of $\chi f_1$, $f_1:=-(P-N(P))u_0\in\cA_\phg^{\cF_1}([0,1)\times\pa M)$, and the commutator term which has compact support in $M^\circ$; here $\cF_1=\{(z+1+j,k')\colon j\in\N_0,\ k'\leq k+j_0\}$.

  We can then similarly solve away $f_1$ term by term to leading order, producing $u_1=\sum_{j=0}^{k+j_0+j_1}\rho^{z+1}(\log\rho)^j u_{1,j}$ with $u_{1,j}\in\CI(\pa M;E|_{\pa M})$, $j_1=\ord(P^*,\ol{-i(z+1)}+i w)$, so that $N(P)u_1=f_1$; and so on. Taking $u$ to be an asymptotic sum of $\chi u_0,\chi u_1,\ldots$ finishes the proof.
\end{proof}

%%%%%%%%%%%%%%%%%%%%%%%%%%%%%%%%%%%%%%%%%%%%%%%%%%
\subsection{Solutions for rapidly decaying forcing; proof of the main result}
\label{SsAsSchwartz}

It remains to examine the solvability of $P u=f$ for rapidly vanishing $f$.

\begin{prop}[Solution for Schwartz forcing]
\label{PropAsSchw}
  Suppose $P\in\Diffb^m(M;E,F)$ has surjective principal symbol. Let $f\in\CIdot(M;F)$. Suppose there exists $\alpha\in\R$ so that
  \[
    \la f,u^*\ra_{L^2(M,\mu;F)}=0\qquad \text{for all}\quad u^*\in\ker P^*\cap\cA^{-\alpha-w}(M;F),
  \]
  and let $\alpha_0\in\R\cup\{+\infty\}$ be the supremum of all such $\alpha$. Set
  \begin{equation}
  \label{EqAsSchwInd}
  \begin{split}
    \cE(P,\alpha_0) &:= \Biggr\{ (z+j,k+\ell) \colon k\leq\bar k,\ (z,\bar k)\in\surjSpecb(P),\ \Re z\geq\alpha_0, \\
      &\hspace{10em} \ell\leq\sum_{q=1}^j \ord\left(P^*,\ol{-i(z+q)}+i w\right) \Biggr\}.
  \end{split}
  \end{equation}
  Then there exists $u\in\cA_\phg^{\cE(P,\alpha_0)}(M;E)$ (when $\alpha_0=+\infty$, this means $u\in\CIdot(M;E)$) so that $P u=f$.
\end{prop}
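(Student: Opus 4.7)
The plan has three stages.

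\textbf{Stage 1: Solve in a weighted b-Sobolev space.} Choose $\alpha<\alpha_0$ close to $\alpha_0$ with $\alpha-\tfrac{w}{2}\notin\Re\surjspecb(P)$, possible by Lemma~\ref{LemmabEll}\eqref{ItbEllSurj}. Elliptic regularity for $P^*$ via Lemma~\ref{LemmaAsKer} shows $\ker_{\Hb^{\infty,-\alpha}(M,\mu;F)}P^*\subset\cA^{-\alpha-w}(M;F)$, so the hypothesis forces $f$ to be orthogonal to this finite-dimensional kernel. Proposition~\ref{PropbFred}\eqref{ItbFredSurj} and Corollary~\ref{CorbRI} then produce $u_1=\rho^{2\alpha}P^*\rho^{-\alpha}v\in\Hb^{\infty,\alpha}(M,\mu;E)$ with $Pu_1=f$, where $v$ solves the elliptic equation $T_\alpha v=\rho^{-\alpha}f\in\CIdot(M;F)$.

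\textbf{Stage 2: Polyhomogeneity of $v$, with exponents controlled by $\surjSpecb(P)$.} Since $T_\alpha$ is elliptic and $T_\alpha v$ is Schwartz, Lemma~\ref{LemmaAsKer} gives $v\in\cA^{\cG}_{\phg}(M;F)$ for some index set $\cG$. The factorization $N(T_\alpha,z)=N(P,z-i\alpha)N(P^*,z+i\alpha)$ and the self-adjointness of $N(T_\alpha,z)$ on real contours imply $\ker N(T_\alpha,z)=\ker N(P^*,z+i\alpha)$; hence the poles of $N(T_\alpha,z)^{-1}$ correspond to $\injSpecb(P^*)$ shifted by $-i\alpha$, equivalently (via the duality $N(P^*,w)^*=N(P,\bar w+i w)$) to $\surjSpecb(P)$ translated, with pole orders $\ord(P^*,z+i\alpha)$.

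\textbf{Stage 3: Push the contour to $\Im z=-\alpha_0+\tfrac{w}{2}$ and assemble $u_1$'s expansion.} By shifting the inverse Mellin contour in the representation of $v$ from $\Im z=\tfrac{w}{2}$ down toward $\Im z=-\alpha_0+\tfrac{w}{2}$, I collect residues at the poles of Stage~2, placing $v$'s exponents in points of $\surjSpecb(P)$ with $\Re z\geq\alpha_0$ (after the relevant shift). The orthogonality hypothesis is used at the critical line: for $\alpha<\alpha_0$ the residues collected are polyhomogeneous terms already in the allowed set, while the passage $\alpha\nearrow\alpha_0$ is unobstructed because any putative boundary contribution at the critical contour would pair $f$ against an element of $\ker P^*\cap\cA^{-\alpha_0-w}$, which vanishes by hypothesis. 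Applying $\rho^{2\alpha}P^*\rho^{-\alpha}$ term-by-term to the polyhomogeneous expansion of $v$ then yields the expansion of $u_1$; at each shifted exponent $z+q$ along the chain, the action of $P^*$ can raise the logarithmic multiplicity by at most $\ord(P^*,\overline{-i(z+q)}+i w)$, accumulating to the bound $\ell\leq\sum_{q=1}^{j}\ord(P^*,\overline{-i(z+q)}+i w)$ appearing in~\eqref{EqAsSchwInd}.

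The main technical obstacle lies in Stage~3: sharply tracking how the application of $\rho^{2\alpha}P^*\rho^{-\alpha}$ to $v$'s expansion raises logarithmic powers, in a manner that mirrors --- in reverse --- the increments produced in the formal construction of Proposition~\ref{PropAs}. Justifying the contour shift at the critical weight $\alpha_0$ itself, as opposed to just $\alpha<\alpha_0$, is the other delicate point, and depends on converting the orthogonality hypothesis on $f$ into the vanishing of boundary residues via the nondegenerate pairings of Lemma~\ref{LemmaAsPair}.
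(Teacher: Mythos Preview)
Your approach has a genuine gap at Stage~2, and it is precisely the obstacle the paper is designed to overcome. You assert that $\ker N(T_\alpha,z)=\ker N(P^*,z+i\alpha)$; this holds for $z\in\R$ (where $N(T_\alpha,z)=N(P^*,z+i\alpha)^*N(P^*,z+i\alpha)$), but the poles of $N(T_\alpha,z)^{-1}$ lie at \emph{complex} $z$, where the factorization $N(T_\alpha,z)=N(P,z-i\alpha)N(P^*,z+i\alpha)$ involves the non-adjoint pair and $N(P,z-i\alpha)$ has infinite-dimensional kernel. Thus $\Specb(T_\alpha)$ is typically much larger than the shift of $\surjSpecb(P)$, and depends on $\alpha$ in a complicated way. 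This is exactly what Remark~\ref{RmkAsPPstar} and the discussion around~\eqref{EqIdiv} warn about: the weighted $P P^*$ method produces solutions with many extraneous terms in their expansions, and for no choice of $\alpha$ does it yield the optimal index set $\cE(P,\alpha_0)$. Stage~3 is also unclear as written: applying the differential operator $\rho^{2\alpha}P^*\rho^{-\alpha}$ to a single term $\rho^z(\log\rho)^k v_k$ does not raise logarithmic powers at all, so the mechanism you describe for the increment $\sum_{q=1}^j\ord(P^*,\cdot)$ is not the right one.

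The paper's proof avoids $P P^*$ entirely. It directly constructs a polyhomogeneous $u'$, supported near $\pa M$ and built from elements of the spaces $F_{k_q}(P,-i z_q)$ with $\Re z_q=\alpha_j$, so that $\la Pu',u^*\ra=\la f,u^*\ra$ for all $u^*$ in the kernel of $P^*$ at the given decay level. The key is a nondegenerate pairing (Lemma~\ref{LemmaAsPair2}) between $\hat F_{J-1}(P,z_0)$ and $\hat F_{J-1}(P^*,\ol{z_0}+iw)$, computed via a Mellin--Plancherel residue formula, which guarantees surjectivity onto the relevant functionals. One then uses Proposition~\ref{PropAs} to make $Pu'$ Schwartz (this is where the logarithmic increments enter), iterates over the levels $\alpha_0<\alpha_1<\cdots$, takes an asymptotic sum, and finally invokes Corollary~\ref{CorbSchwartz} to solve the remaining Schwartz error. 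The orthogonality hypothesis is used at the outset to pass to the quotient $\cK_0^*/\cK_{-1}^*$, not as a contour-shifting device.
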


By Lemma~\ref{LemmaAsKer} applied to $P^*$, the existence of $\alpha$ is guaranteed for all $f\in\CIdot(M;F)$ if and only if no $u^*\in\ker P^*$ vanishes to all orders at $\pa M$ (i.e.\ $u^*\in\CIdot(M;F)$, $P^*u^*=0$ implies $u^*=0$), i.e.\ if and only if unique continuation at $\pa M$ holds for elements of the nullspace of $P^*$.

\begin{rmk}[Smooth solvability for finite-dimensional families]
\label{RmkAsSchwParam}
  If $f$ depends smoothly on a finite-dimensional parameter (similarly to Proposition~\ref{PropACI}), and satisfies the assumptions of Proposition~\usref{PropAsSchw} for all values of the parameter (with $\alpha_0$ fixed), then one can find a solution $u$ of the stated class which moreover depends smoothly on this parameter.
\end{rmk}

We prepare the proof of Proposition~\ref{PropAsSchw} with the following technical result:

\begin{lemma}[Nondegenerate pairings \#2]
\label{LemmaAsPair2}
  Suppose $P\in\Diffb^m(M;E,F)$ has surjective principal symbol. Let $z_0\in\C$ and $J=\ord(P^*,\ol{z_0}+i w)$. Define the sesquilinear map
  \begin{alignat*}{3}
    b \colon\hat F_{J-1}(P,z_0) &\,\times\,&& \hat F_{J-1}(P^*,\ol{z_0}+i w) &&\to \C, \\
    (\tilde u&,&&\tilde u^*) &&\mapsto \Res_{z=z_0}\la N(P,z)\tilde u(z),\tilde u^*(\bar z+i w)\ra_{L^2(\pa M;F|_{\pa M})}.
  \end{alignat*}
  Then the linear map $\hat F_{J-1}(P,z_0)\ni\tilde u\mapsto b(\tilde u,-)$, taking values in the space of antilinear functionals on $\hat F_{J-1}(P^*,\ol{z_0}+i w)$, is surjective and has a continuous linear right inverse.
\end{lemma}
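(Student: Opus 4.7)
The plan is to reduce surjectivity of $E\colon\tilde u\mapsto b(\tilde u,-)$ to a nondegeneracy statement, which will then follow from a Lagrange-duality lifting combined with an index count at $z_0':=\ol{z_0}+i w$. Since $P^*$ has injective principal symbol, $\hat F_{J-1}(P^*,z_0')$ is finite-dimensional (Lemma~\ref{LemmabEll}), so a continuous linear right inverse of $E$ exists as soon as $E$ is surjective: one simply picks a finite-dimensional subspace of $\hat F_{J-1}(P,z_0)$ mapping isomorphically onto the target. Surjectivity onto the finite-dimensional antilinear dual is in turn equivalent to right-nondegeneracy: if $\tilde u^*\in\hat F_{J-1}(P^*,z_0')$ satisfies $b(\tilde u,\tilde u^*)=0$ for all $\tilde u\in\hat F_{J-1}(P,z_0)$, then $\tilde u^*=0$.

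Direct residue computation for $\tilde u(z) = \sum_{k=0}^{J-1}(z-z_0)^{-k-1}u_k$ and $\tilde u^*(\zeta) = \sum_{k=0}^{J-1}(\zeta-z_0')^{-k-1}u^*_k$ (with $\zeta=\bar z+iw$ substituted inside the pairing) yields
\[
b(\tilde u,\tilde u^*) = \sum_{j=0}^{J-1}\la u_j, g^*_j\ra_{L^2(\pa M;E|_{\pa M})}, \qquad g^*_j := \tfrac{1}{j!}\pa_\zeta^j\bigl[N(P^*,\zeta)\tilde u^*(\zeta)\bigr]\big|_{\zeta=z_0'},
\]
each $g^*_j\in\CI(\pa M;E|_{\pa M})$ being well-defined by holomorphy of $N(P^*,\zeta)\tilde u^*(\zeta)$ at $z_0'$. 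On the other hand, membership $\tilde u\in\hat F_{J-1}(P,z_0)$ is encoded in $J$ explicit linear equations $L_\ell((u_j))=0$, $\ell=1,\ldots,J$ (vanishing of the polar coefficients of $N(P,z)\tilde u(z)$), assembling into a continuous linear map $\phi\colon\CI(\pa M;E|_{\pa M})^J\to\CI(\pa M;F|_{\pa M})^J$ with $\ker\phi=\hat F_{J-1}(P,z_0)$. Under a suitable ordering, $\phi$ is triangular with diagonal entries $N(P,z_0)$, whose closed range on $\CI(\pa M;E|_{\pa M})$ (by right ellipticity on the closed manifold $\pa M$, Remark~\ref{RmkRightInv}) gives closed range for $\phi$.

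Under the hypothesis $b(\tilde u,\tilde u^*)=0$ on $\ker\phi$, the tuple $(g^*_j)$ lies in $(\ker\phi)^\perp$, and closed range of $\phi$ together with elliptic regularity produces smooth Lagrange multipliers $\nu^*_0,\ldots,\nu^*_{J-1}\in\CI(\pa M;F|_{\pa M})$ satisfying $g^*_k = \sum_{m=0}^k P^*_m\nu^*_{k-m}/m!$ for $k=0,\ldots,J-1$, where $P^*_m:=\pa_\zeta^m N(P^*,z_0')$. The right-hand side is the order-$k$ Taylor coefficient at $z_0'$ of $N(P^*,\zeta)\tilde\nu^*(\zeta)$ for the polynomial $\tilde\nu^*(\zeta):=\sum_{\ell=0}^{J-1}(\zeta-z_0')^\ell\nu^*_\ell$; hence $N(P^*,\zeta)(\tilde u^*-\tilde\nu^*)(\zeta)=O((\zeta-z_0')^J)$. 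Setting $V(\zeta):=(\zeta-z_0')^{-J}(\tilde u^*(\zeta)-\tilde\nu^*(\zeta))$, the function $N(P^*,\zeta)V(\zeta)$ is therefore holomorphic at $z_0'$.

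Consequently the polar part of $V$ lies in $\hat F_{2J-1}(P^*,z_0')$, which via the residue isomorphism $\hat F_j\cong F_j$ corresponds to an element of $F_{2J-1}(P^*,z_0')=F(P^*,z_0')=F_{J-1}(P^*,z_0')$ (using $\ord(P^*,z_0')=J$); in particular its $\log\rho$-powers are at most $J-1$. But the $(\zeta-z_0')^{-J}\tilde u^*$-summand of $V$ generates $\log\rho$-powers $J, J+1,\ldots, 2J-1$ with coefficients $u^*_0,u^*_1,\ldots, u^*_{J-1}$ respectively, while $-(\zeta-z_0')^{-J}\tilde\nu^*$ contributes only $\log\rho$-powers $\leq J-1$; vanishing of the top-power coefficients forces $u^*_k=0$ for all $k$, completing the proof. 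The main technical step is establishing closed range of $\phi$ (which follows from its triangular structure together with closed range of $N(P,z_0)$); the key conceptual step is the Lagrange lift $\tilde\nu^*$, which converts the nondegeneracy problem into a finite count against the log-order of $F(P^*,z_0')$.
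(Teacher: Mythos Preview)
Your proof is correct and takes a genuinely different route from the paper's. Both arguments begin by reducing (via finite-dimensionality of $\hat F_{J-1}(P^*,z_0')$) to right-nondegeneracy: if $b(\tilde u,\tilde u^*)=0$ for all $\tilde u$, then $\tilde u^*=0$. From there the approaches diverge. The paper proceeds iteratively, using the pairings $b_j$ of Lemma~\ref{LemmaAsPair} to peel off one order of pole at a time: first $\tilde u^*\in\hat F_{J-2}$, then a further decomposition shows $\tilde u^*\in(z-z_0')\hat F_{J-1}$, and one repeats until $\tilde u^*\in(z-z_0')^J\hat F_{J-1}=0$. Your argument instead packages the constraint ``$\tilde u\in\hat F_{J-1}(P,z_0)$'' as the kernel of a single system $\phi$ which is right-elliptic on the closed manifold $\pa M$ (lower-triangular with diagonal $N(P,z_0)$, off-diagonal entries of strictly lower differential order), invokes the closed range theorem to obtain distributional Lagrange multipliers $\nu^*_\ell$, bootstraps them to smoothness via elliptic regularity for the left-elliptic diagonal $N(P^*,z_0')$, and then finishes with a single order count: the polar part of $(\zeta-z_0')^{-J}(\tilde u^*-\tilde\nu^*)$ lies in $\hat F_{2J-1}(P^*,z_0')=\hat F_{J-1}(P^*,z_0')$, forcing all $u^*_k=0$.

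Your approach is more structural and dispatches the problem in one step rather than two nested inductions; it also makes transparent why the number $J=\ord(P^*,z_0')$ is exactly the right depth. The paper's approach is more elementary in that it uses only the pairings $b_j$ already established in Lemma~\ref{LemmaAsPair} and avoids appealing to the closed range theorem on Fr\'echet spaces. One point worth tightening in your write-up: the sentence ``closed range \ldots\ gives closed range for $\phi$'' deserves a line of justification---the cleanest is to observe that the off-diagonal entries of $\phi$ have strictly lower differential order, so $\phi$ is itself a right-elliptic system on $\pa M$ with principal symbol $\diag(\sigmab^m(P_0),\ldots,\sigmab^m(P_0))$, whence closed (indeed finite-codimensional) range on $\CI$ follows exactly as in Remark~\ref{RmkRightInv}.
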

\begin{proof}
  Since $\hat F_{J-1}(P^*,\ol{z_0}+i w)$ is finite-dimensional, it suffices to prove that the adjoint map $\tilde u^*\mapsto b(-,\tilde u^*)$ is injective. Let thus $\tilde u^*\in\hat F_{J-1}(P^*,\ol{z_0}+i w)$ be such that
  \[
    b(\tilde u,\tilde u^*)=0\quad\forall\ \tilde u\in\hat F_{J-1}(P,z_0).
  \]
  For $\tilde u=(z-z_0)^{-1}u$, $u\in\hat F_{[J-1]}(P,z_0)$, we have $0=b(\tilde u,\tilde u^*)=b_{J-1}(u,{\rm L.o.t.}(\tilde u^*))$ in the notation of Lemma~\ref{LemmaAsPair} (see also~\eqref{EqAsPairAdj}); by Lemma~\ref{LemmaAsPair}, this implies ${\rm L.o.t.}(\tilde u^*)\in\hat F_{[J]}(P^*,\ol{z_0}+i w)=\{0\}$. Therefore, $\tilde u^*\in\hat F_{J-2}(P^*,\ol{z_0}+i w)$. Repeating this argument with $\tilde u(z)=(z-z_0)^{-1}u$, $u\in\hat F_{[J-2]}(P,z_0)$, implies ${\rm L.o.t.}(\tilde u^*)\in\hat F_{[J-1]}(P^*,\ol{z_0}+i w)$, i.e.\ we can write
  \begin{equation}
  \label{EqAsPair2Dec}
    \tilde u^*(z) = (z-(\ol{z_0}+i w))\tilde u^*_{J-1}(z) + \tilde u^*_{J-3}(z),\qquad \tilde u^*_\ell\in\hat F_\ell(P^*,\ol{z_0}+i w)\ \ (\ell=J-1,J-3).
  \end{equation}
  For $\tilde u(z)=(z-z_0)^{-1}u$, $u\in\hat F_{[J-3]}(P,z_0)$, we find
  \[
    0=b(\tilde u,\tilde u^*)=b_{J-3}(u,{\rm L.o.t.}(\tilde u^*_{J-3}))+b_{J-2}\bigl(u,{\rm L.o.t.}((z-(\ol{z_0}+i w))\tilde u_{J-1}^*)\bigr);
  \]
  by Lemma~\ref{LemmaAsPair}, the second term vanishes, and thus the vanishing of the first term implies ${\rm L.o.t.}(\tilde u^*_{J-3})\in\hat F_{[J-2]}(P^*,\ol{z_0}+i w)$. Therefore, $\tilde u^*_{J-3}\in(z-(\ol{z_0}+i w))\hat F_{J-2}(P^*,\ol{z_0}+i w)+\hat F_{J-4}(P^*,\ol{z_0}+i w)$, which implies that we can improve the decomposition~\eqref{EqAsPair2Dec} to $\tilde u^*(z)=(z-(\ol{z_0}+i w))\tilde u^*_{J-1}(z)+\tilde u^*_{J-4}(z)$ for suitable $\tilde u^*_\ell\in\hat F_\ell(P^*,\ol{z_0}+i w)$ ($\ell=J-1,J-4$). Continuing in this fashion, we ultimately find that we can write $\tilde u^*(z)=(z-(\ol{z_0}+i w))\tilde u^*_{J-1}(z)$, $\tilde u^*_{J-1}\in\hat F_{J-1}(P^*,\ol{z_0}+i w)$. We have thus shown that $\tilde u^*$ is one order less singular at $z=\ol{z_0}+i w$ than assumed initially.

  Repeating the above arguments for $\tilde u=(z-z_0)^{-2}u$ where $u\in\hat F_j(P,z_0)$, $j=J-1,J-2,\ldots$ implies $\tilde u^*\in(z-(\ol{z_0}+i w))^2\hat F_{J-1}(P^*,\ol{z_0}+i w)$. Continuing in this fashion until the exponent $2$ is improved to $J$, we get $\tilde u^*=0$, finishing the proof.
\end{proof}

\begin{proof}[Proof of Proposition~\usref{PropAsSchw}]
  If $\alpha_0=+\infty$, the claim follows from Corollary~\ref{CorbSchwartz}, so let us assume that $\alpha_0<\infty$. Let $\alpha_0<\alpha_1<\ldots$ be the finite or countably infinite sequence of real numbers with
  \[
    \{\Re z \colon z\in\surjspecb(P),\ \Re z\geq\alpha_0\}=\{\alpha_0,\alpha_1,\ldots\}.
  \]
  For $j\geq 0$, let $\cK^*_j:=\{u^*\in\bigcap_{\eps>0}\cA^{-\alpha_j-w-\eps}(M;F)\colon P^*u^*=0\}$. (Each $\cK_j^*$ is finite-dimensional by Proposition~\ref{PropbFred}, but the union of all $\cK_j^*$ may be infinite-dimensional.) We need to show that there exists $u'$ with $u'\in\cA_\phg^{\cE(P,\alpha_0)}(M;E)$, $P u'\in\CIdot(M;F)$, and so that
  \begin{equation}
  \label{EqAsSchw}
    \la P u',u^*\ra_{L^2(M,\mu'F)}=\la f,u^*\ra_{L^2(M,\mu;F)}
  \end{equation}
  for all $u^*\in\cK_j^*$, $j\geq 0$. Given such a $u'$, we then conclude that $f-P u'\in\CIdot(M;F)$ is orthogonal to $\ker_{\CmI(M;F)}P^*=\bigcup_{j\geq 0}\cK_j^*$. By Corollary~\ref{CorbSchwartz}, we then have $f-P u'=P u''$ for some $u''\in\CIdot(M;E)$, and hence $u=u'+u''$ is of the required form.

  We now turn to the task of finding $u'$ so that~\eqref{EqAsSchw} holds. Let $\chi\in\CIc([0,1)_\rho\times\pa M)$ be identically $1$ near $\rho=0$. Define the finite set $\{(z_q,k_q)\}=\{(z,k)\colon z\in\surjspecb(P)\colon\Re z=\alpha_0,\ k=\ord(P^*,\bar z+i w)-1\}$, and let
  \[
    G(P^*,-\alpha_0-w) := \bigoplus_q F(P^*,\ol{-i z_q}+i w).
  \]
  (This is a finite-dimensional vector space.) Consider first $u^*\in\cK_0^*$. By Lemma~\ref{LemmaAsKer}, we have $u^*=\sum_q \chi u^*_q+u^{*\prime}$ where $u_q^*\in F(P^*,\ol{-i z_q}+i w)$ and $u^{*\prime}\in\cA^{-\alpha_0-w+\delta}$ for some $\delta>0$. We have $u_q^*=0$ for all $q$ if and only if $u^*=u^{*\prime}\in\cK_{-1}^*:=\bigcup_{\eps>0}\ker_{\cA^{-\alpha_0-w+\eps}(M;F)}P^*$; and in this case, we have $\la f,u^*\ra=0$ by assumption. Therefore, we have an injective map $\cK_0^*/\cK_{-1}^*\to G(P^*,-\alpha_0-w)$, $[u^*]\mapsto(u^*_q)$, and the antilinear functional $\la f,-\ra_{L^2(M,\mu;F)}$ on $\cK_0^*$ induces an antilinear functional on $\cK_0^*/\cK_{-1}^*$ which we can then extend to an antilinear functional $\lambda_f$ on $G(P^*,-\alpha_0-w)$.

  Define now
  \[
    G(P,\alpha_0) := \bigoplus_q F_{k_q}(P,-i z_q),
  \]
  and consider for $v=(v_q)\in G(P,\alpha_0)$ (identified with the finite sum $\sum_q v_q$, which is polyhomogeneous on $[0,1)\times\pa M$) and $u^*\in\cK_0^*$ the pairing $\la P(\chi v),u^*\ra_{L^2(M,\mu;F)}$. Defining the approximate identity $\phi_\eps=1-\chi_\eps$ where $\chi_\eps(\rho,y)=\chi(\frac{\rho}{\eps},y)$, we rewrite this (using that $\supp\dd\phi_\eps\subset\chi^{-1}(1)$ for sufficiently small $\eps>0$) as
  \begin{align*}
    \la P(\chi v),u^*\ra &= \lim_{\eps\searrow 0}\left( \la P(\chi v),\phi_\eps u^*\ra - \la \chi v,\phi_\eps P^*u^*\ra\right) = -\lim_{\eps\searrow 0} \la[P,\phi_\eps]\chi v,u^*\ra \\
      &=-\lim_{\eps\searrow 0} \la[N(P),\phi_\eps]\chi v,\chi u^*\ra = \lim_{\eps\searrow 0} \left( \la \phi_\eps N(P)(\chi v),\chi u^*\ra - \la\phi_\eps \chi u,N(P^*)(\chi u^*)\ra \right) \\
      &= \la N(P)(\chi v),\chi u^*\ra - \la \chi v, N(P^*)(\chi u^*)\ra.
  \end{align*}
  This vanishes for $u^*\in\cK_{-1}^*$, as follows by integrating by parts on the left hand side. Moreover, the final expression in the first line implies that this pairing only depends on the leading order terms of $v$ and $u^*$. Therefore, for $v\in G(P,\alpha_0)$ we obtain another linear functional on $G(P^*,-\alpha_0-w)$. The heart of the proof is thus to show that the map
  \begin{equation}
  \label{EqAsSchwPair}
    G(P,\alpha_0) \ni v \mapsto B(v,\cdot) := \la N(P)(\chi v),\chi\cdot\ra_{L^2(M,\mu;F)} - \la\chi v,N(P^*)(\chi\cdot)\ra_{L^2(M,\mu;F)}
  \end{equation}
  into the space of antilinear functionals on $G(P^*,-\alpha_0-w)$ is surjective. Once this is shown, we select $v$ so that $B(\cdot,v)=\lambda_f$; this implies for $u'=\chi v$ the validity of~\eqref{EqAsSchw} for all $u^*\in\cK_0^*$. In order to remedy the defect that $P(\chi v)$ typically does not vanish to infinite order at $\pa M$, we apply Proposition~\ref{PropAs} to the forcing term $-P(\chi v)=-\chi N(P)v-\chi(P-N(P))v=-\chi(P-N(P))v$ to find $v'\in\cA_\phg^{\cE(P,\alpha_0)}(M;E)\cap\bigcap_{\eps>0}\cA^{\alpha_0+1-\eps}(M;E)$ so that $P u'_0\in\CIdot(M;F)$ where $u'_0=\chi v+v'\in\cA_\phg^{\cE(P,\alpha_0)}(M;E)$. We still have $\la P u'_0,u^*\ra_{L^2(M,\mu;F)}=\la f,u^*\ra_{L^2(M,\mu;F)}$ for all $u^*\in\cK_0^*$ since the contribution of $v'$ to the pairing vanishes (via integration by parts). But this means that $f-P u'_0\in\CIdot(M;F)$ satisfies the same hypotheses as $f$, except now $\la f-P u'_0,u^*\ra_{L^2(M,\mu;F)}=0$ holds not only for all $u^*\in\cK_{-1}^*$, but for all $u^*\in\cK_0^*$. An inductive procedure gives $u'_j\in\cA_\phg^{\cE(P,\alpha_j)}(M;E)\subset\cA_\phg^{\cE(P,\alpha_0)}(M;E)$ so that $\la f-P(u'_0+\cdots+u'_j),u^*\ra=0$ for all $u^*\in\cK_j^*$. Taking $u'\in\cA_\phg^{\cE(P,\alpha_0)}(M;E)$ to be an asymptotic sum of the $u'_j$, $j\in\N_0$, arranges~\eqref{EqAsSchw} for all $u^*\in\bigcup_{j\in\N_0}\cK_j^*$.

  It remains to show the surjectivity of~\eqref{EqAsSchwPair}. Note that $N(P)(\chi v)$, resp.\ $N(P^*)(\chi u^*)$ vanishes to one order more at $\pa M$ than $\chi v\in\cA^{\alpha_0-\eps}(M;E)$ and $\chi u^*\in\cA^{-\alpha_0-w-\eps}(M;E)$. Using the Mellin transform, Plancherel's theorem and the fact that $\wh{\chi v}(z)$ and $\wh{\chi u^*}(z)$ are meromorphic and vanish rapidly at real infinity allow us to write
  \begin{align*}
    B(v,u^*)&=\frac{1}{2\pi} \oint_\gamma \big\la \bigl(N(P)(\chi v)\bigr)\ftrans(z), \wh{\chi u^*}(\bar z+i w)\big\ra_{L^2(\pa M,\nu;F|_{\pa M})}\,\dd z \\
    &\qquad = \frac{1}{2\pi} \oint_\gamma \big\la \wh{\chi v}(z), \bigl(N(P^*)(\chi u^*)\bigr)\ftrans(\bar z+i w)\big\ra_{L^2(\pa M,\nu;F|_{\pa M})}\,\dd z,
  \end{align*}
  where $\nu>0$ is the unique density on $\pa M$ with the property that $\rho^{-w}\mu(\rho\pa_\rho,\cdot)=\nu$ at $\rho=0$, and where $\gamma$ is the union of $\{\Im z=-\alpha_0-\eps\}$ (traversed in the direction of increasing real part) and $\{\Im z=-\alpha_0+\eps\}$ (traversed in the direction of decreasing real part). If $v\in F_{k_q}(P,-i z_q)$ and $u^*\in F(P^*,\ol{-i z_{q'}}+i w)$ with $q\neq q'$, the integrand is holomorphic and thus $B(v,u^*)=0$. Thus, $B$ is block-diagonal. For $q=q'$ on the other hand, we have
  \[
    B(v,u^*) = \frac{1}{2\pi}\oint_{z_q} \big\la \bigl(N(P)(\chi v)\bigr)\ftrans(z), \wh{\chi u^*}(\bar z+i w)\big\ra_{L^2(\pa M,\nu;F|_{\pa M})}\,\dd z = i b(\tilde v,\tilde u^*)
  \]
  in the notation of Lemma~\ref{LemmaAsPair2}, where $\tilde v\in\hat F_{k_q}(P,-i z_q)$ and $\tilde u^*\in\hat F_{k_q}(P^*,\ol{-i z_q}+i w)$ correspond to $v$ and $u^*$ via~\eqref{EqAsFjRes}. (That is, $\wh{\chi v}(z)-\tilde v(z)$ is holomorphic at $z=z_q$, and similarly $\wh{\chi u^*}(z)-\tilde u^*(z)$ is holomorphic at $z=\ol{z_q}+i w$.) Here, the integration contour is a small circle around $z_q$, traversed counter-clockwise. By Lemma~\ref{LemmaAsPair2}, $\tilde v\mapsto i b(\tilde v,-)$ surjects onto the space of antilinear functionals on $F_{k_q}(P^*,\ol{-i z_q}+i w)^*$. This completes the proof.
\end{proof}

Combining Propositions~\ref{PropAs} and \ref{PropAsSchw}, we now obtain:

\begin{thm}[Solution with sharp asymptotics]
\label{ThmAs}
  Suppose $P\in\Diffb^m(M;E,F)$ has surjective principal symbol. Let $\cF\subset\C\times\N_0$ be an index set, and define the index sets $\cE(P,\cF)$ and $\cE(P,\alpha_0)$ (for $\alpha_0\in\R\cup\{+\infty\}$) by~\eqref{EqAsInd} and \eqref{EqAsSchwInd}; put $\alpha_\cF:=\min_{(z,0)\in\cF}\Re z$. Let $f\in\cA_\phg^\cF(M;F)$. Let $-\infty\leq\alpha_{\rm coker}\leq\alpha_\cF$ be the supremum of all $\alpha<\alpha_\cF$ so that $\la f,u^*\ra_{L^2(M,\mu;F)}=0$ for all $u^*\in\ker_{\cA^{-\alpha-w}(M;F)}P^*$. Suppose that $\alpha_{\rm coker}>-\infty$. Finally, if $\alpha_{\rm coker}\in\Re\surjspecb(P)$, set $\alpha_0:=\alpha_{\rm coker}$; otherwise, let $\alpha_0\in[\alpha_{\coker},+\infty]$ be the upper bound of the largest interval with lower bound $\alpha_{\coker}$ which is disjoint from $\Re\surjspecb(P)$. Then there exists $u\in\cA_\phg^{\cE(P,\cF)\cup\cE(P,\alpha_0)}(M;E)$ with $P u=f$.
\end{thm}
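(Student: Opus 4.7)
The plan is to combine Propositions~\ref{PropAs} and \ref{PropAsSchw}. First, apply Proposition~\ref{PropAs} to produce $u_1\in\cA_\phg^{\cE(P,\cF)}(M;E)$ with $f':=f-P u_1\in\CIdot(M;F)$. The theorem then follows by applying Proposition~\ref{PropAsSchw} to $f'$, provided I verify that the sup-parameter produced there for $f'$ is at least $\alpha_0$: the resulting $u_2\in\cA_\phg^{\cE(P,\alpha_0)}(M;E)$ combines with $u_1$ into $u:=u_1+u_2\in\cA_\phg^{\cE(P,\cF)\cup\cE(P,\alpha_0)}(M;E)$ satisfying $P u=f$. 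Here I use that $\cE(P,\alpha)$ is decreasing in $\alpha$ (since its defining condition $\Re z\geq\alpha$ becomes more restrictive), so any sup-parameter $\geq\alpha_0$ yields an index set contained in $\cE(P,\alpha_0)$.

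The heart of the argument is therefore to prove $\la f',u^*\ra_{L^2(M,\mu;F)}=0$ for all $u^*\in\ker P^*\cap\cA^{-\alpha-w}(M;F)$ and all $\alpha<\alpha_0$. The key observation is that \emph{any nonzero such $u^*$ in fact lies in $\cA^{-\alpha'-w}(M;F)$ for some $\alpha'<\alpha_{\coker}$.} Indeed, in the case $\alpha_{\coker}\in\Re\surjspec_b(P)$ one has $\alpha_0=\alpha_{\coker}$ and already $\alpha<\alpha_{\coker}$. Otherwise $\alpha_0>\alpha_{\coker}$ with $[\alpha_{\coker},\alpha_0)\cap\Re\surjspec_b(P)=\emptyset$; by Lemma~\ref{LemmaAsKer} applied to $P^*$, such a $u^*$ is polyhomogeneous with leading exponent $\beta$ satisfying $\Re\beta\in\Re\injspec_b(P^*)$, and the definitions of $\surjSpec_b$ and $\injSpec_b$ combined with the adjoint identity $N(P^*,z)=N(P,\bar z+iw)^*$ (from the proof of Lemma~\ref{LemmabEll}) give the bijection
\[
  \Re\injspec_b(P^*) \;=\; -w-\Re\surjspec_b(P).
\]
Hence $\Re\beta=-w-\gamma$ for some $\gamma\in\Re\surjspec_b(P)$, the condition $u^*\in\cA^{-\alpha-w}$ forces $\gamma\leq\alpha$, and the emptiness of $[\alpha_{\coker},\alpha_0)\cap\Re\surjspec_b(P)$ forces $\gamma<\alpha_{\coker}$, so one can take any $\alpha'\in(\gamma,\alpha_{\coker})$.

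With this reduction, $\la f,u^*\ra=0$ follows from the definition of $\alpha_{\coker}$, while $\la Pu_1,u^*\ra=\la u_1,P^*u^*\ra=0$ by integration by parts: since $u_1\in\cA^{\alpha_\cF-\eps}$ and $u^*\in\cA^{-\alpha'-w-\eps}$ with $\alpha'<\alpha_{\coker}\leq\alpha_\cF$, the integrand is absolutely integrable against the weight-$w$ density $\mu$; using the cutoffs $\phi_\eps$ supported on $\rho\sim\eps$ from the proof of Proposition~\ref{PropAsSchw}, and the uniform boundedness of $\rho\pa_\rho\phi_\eps$, the commutator contribution $\la[P,\phi_\eps]u_1,u^*\ra$ is of size $O(\eps^{\alpha_\cF-\alpha'-\eps'})\to 0$. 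The main step requiring care is the bookkeeping around the spectral identity above, together with the verification that $\alpha_0$ is calibrated precisely so as to force the leading-order exponents of admissible $u^*$ strictly below $\alpha_{\coker}$; the remaining ingredients, namely the two quoted propositions and the integration by parts, are routine.
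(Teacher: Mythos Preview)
Your proposal is correct and follows essentially the same two-step scheme as the paper: first invoke Proposition~\ref{PropAs} to reduce to a Schwartz remainder, then apply Proposition~\ref{PropAsSchw} after checking that the remainder is orthogonal to $\ker P^*\cap\cA^{-\alpha-w}$ for all $\alpha<\alpha_0$. The paper's proof is terser---it simply asserts that $\ker_{\cA^{-\alpha-w}}P^*$ is independent of $\alpha\in[\alpha_{\coker},\alpha_0)$ and that integration by parts gives the required orthogonality---whereas you spell out the spectral identity $\Re\injspec_{\bop}(P^*)=-w-\Re\surjspec_{\bop}(P)$ (via Lemma~\ref{LemmaAsKer}) and the commutator estimate explicitly; but the underlying argument is the same.
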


The only assumption on $f$ in this result is the finiteness of $\alpha_{\rm coker}$. This is always satisfied if and only if unique continuation at $\pa M$ holds for elements of the smooth nullspace of $P^*$, as already discussed after Proposition~\ref{PropAsSchw}.

In the case that one can take $\alpha_0=+\infty$, we have $\cE(P,\alpha_0)=\emptyset$, so $u\in\cA_\phg^{\cE(P,\cF)}(M;E)$. In the further special case that $\cF=\emptyset$, one obtains a solution $u\in\CIdot(M;E)$. Thus, Theorem~\ref{ThmAs} generalizes (but of course its proof relies on) Corollary~\ref{CorbSchwartz}.

We also note that if $f$ depends smoothly on a finite-dimensional parameter, then one can find $u$ of the stated class with smooth dependence on the parameter, similarly to Proposition~\ref{PropACI} and Remark~\ref{RmkAsSchwParam}.

\begin{proof}[Proof of Theorem~\usref{ThmAs}]
  First, we use Proposition~\ref{PropAs} to find $u_0\in\cA_\phg^{\cE(P,\cF)}(M;E)$ so that $f_1:=f-P u_0\in\CIdot(M;F)$. Since $\min_{(z,0)\in\cE(P,\cF)}\Re z=\min_{(z,0)\in\cF}\Re z=\alpha_\cF\geq\alpha_{\coker}$, an integration by parts shows that $\la f_1,u^*\ra=0$ for all $u^*\in\ker_{\cA^{-\alpha-w}(M;F)}P^*$, $\alpha<\alpha_{\coker}$. In the case that $\alpha_0>\alpha_{\rm coker}$, the kernel $\ker_{\cA^{-\alpha-w}(M;F)}P^*$ is independent of $\alpha\in[\alpha_{\rm coker},\alpha_0)$. Therefore, Proposition~\ref{PropAsSchw} applies and gives $u_1\in\cA_\phg^{\cE(P,\alpha_0)}(M;E)$ with $f_1=P u_1$. Setting $u=u_0+u_1$ finishes the proof.
\end{proof}

\begin{rmk}[Comparison with $P P^*$ arguments]
\label{RmkAsPPstar}
  Fix on $M$ a positive density with weight $0$. Using the notation of Theorem~\ref{ThmAs}, fix any $\alpha<\alpha_{\rm coker}$ with $\alpha\neq\Re\surjspecb(P)$; then $f\in\Hb^{\infty,\alpha}(M;F)$ is orthogonal to the nullspace of $P^*$ on $\Hb^{\infty,-\alpha}(M;F)$. We may then attempt to solve $P u=f$ via~\eqref{EqbRIT} by means of inverting the elliptic operator $T_\alpha$. The assumptions on $f,\alpha$ imply that this elliptic equation indeed has a solution $v\in\Hb^\infty(M;F)$, and thus $u\in\Hb^{\infty,\alpha}(M;E)$. (In other words, $u=G f$ in the notation of Corollary~\ref{CorbRI}.) Furthermore, $v$ is polyhomogeneous since $f$ is, and the index set of $v$ is enlarged relative to that of $f$ by adding elements related to $\Specb(T_\alpha)$, cf.\ \cite[Proposition~5.61]{MelroseAPS}. The index set of $v$ and thus of $u$ depends in a complicated manner on the choice of $\alpha$; and indeed the polyhomogeneous expansion of $u$ at $\pa M$ typically has many more terms than the solution produced by Theorem~\ref{ThmAs}. An example of this phenomenon is given in the introduction around equation~\eqref{EqIdiv}. See also \cite[Proposition~4.10, 4.14, and Remark~4.15]{HintzGlueID}.
\end{rmk}

\begin{rmk}[Parametrices and ps.d.o.s]
\label{RmkAsPx}
  We do not address here the interesting problem to construct, in the context of Theorem~\ref{ThmAs}, a right parametrix, or indeed a (generalized) right inverse, of $P$ in the large b-calculus \cite{MelroseAPS} whose index sets are as small as possible. Theorem~\ref{ThmAs}, applied to effect the right inversion of the normal operator at the front face of the b-double space of $M$, is likely useful for this purpose. The main benefit of a parametrix would be that it gives mapping properties on coarser function spaces (such as weighted b-Sobolev or H\"older spaces); but for such purposes, Corollary~\ref{CorbRI} is typically sufficient in applications. We also do not consider here the problem of generalizing our arguments to the case that $P$ is a right elliptic b-\emph{pseudodifferential} operator; this largely only requires notational changes.
\end{rmk}

%%%%%%%%%%%%%%%%%%%%%%%%%%%%%%%%%%%%%%%%%%%%%%%%%%
\subsection{Infinite-dimensionality of the kernel}
\label{SsAKer}

Complementing Theorem~\ref{ThmAs}, we now prove two results which show that the \emph{kernel} of $P$ on conormal or polyhomogeneous functions is infinite-dimensional (and thus solutions of $P u=f$ are non-unique) in rather dramatic ways.

\begin{thm}[Infinite-dimensional conormal nullspace]
\label{ThmAsIndetCon}
  Let $P\in\Diffb^m(M;E,F)$ be underdetermined-elliptic. For $\alpha\in\R$, set $K^\alpha=\ker_{\cA^\alpha(M;E)}P$. Then for all $\alpha<\beta$, the space $K^\alpha/K^\beta$ is infinite-dimensional.
\end{thm}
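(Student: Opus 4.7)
The strategy is to produce, for a suitably chosen $z_0\in\C$ with $-\Im z_0=c\in[\alpha,\beta)$, an infinite-dimensional family $\{\tilde u_\ell\}_{\ell\in\N}\subset K^\alpha$ whose polyhomogeneous expansions at $\pa M$ have linearly independent $\rho^{iz_0}$-coefficients; linear independence modulo $K^\beta$ is then automatic, since any element of $K^\beta\subset\cA^\beta(M;E)$ has vanishing $\rho^{iz_0}$-coefficient (as $c<\beta$). The crucial input at the boundary is that the principal symbol of $N(P,z_0)\in\Diff^m(\pa M;E|_{\pa M},F|_{\pa M})$ at $(y,\eta)$ with $\eta\neq 0$ equals $\sigmab^m(P)(0,y;0,\eta)$, which is surjective with kernel of constant dimension $\rank E-\rank F>0$; hence $N(P,z_0)$ is underdetermined elliptic on the closed manifold $\pa M$, and by the Bourguignon--Ebin--Marsden theorem \cite{BourguignonEbinMarsdenPsdoSurj} its smooth nullspace $\ker N(P,z_0)\cap\CI(\pa M;E|_{\pa M})$ is infinite-dimensional.

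To pick $z_0$, note that $\surjspecb(P)\subset\C$ is countable (being finite in each strip $\{|\Im z|<C\}$ by Lemma~\ref{LemmabEll}\eqref{ItbEllSurj}), so $\Re\surjspecb(P)\subset\R$ has Lebesgue measure zero. Choose $c\in[\alpha,\beta)\setminus\Re\surjspecb(P)$, set $z_0:=-ic$, and fix a linearly independent family $\{u_\ell\}_{\ell\in\N}\subset\ker N(P,z_0)\cap\CI(\pa M;E|_{\pa M})$. For each $\ell$ define $u^{(0)}_\ell:=\chi\rho^{iz_0}u_\ell$, where $\chi\in\CIc([0,1)_\rho\times\pa M)$ equals $1$ near $\rho=0$ and $u_\ell$ is extended independently of $\rho$ in the collar; thus $u^{(0)}_\ell\in\cA^c(M;E)$, and its polyhomogeneous expansion at $\pa M$ consists of the single term $\rho^{iz_0}u_\ell$. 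Since $N(P,z_0)u_\ell=0$ and $P-N(P)\in\rho\Diffb^m$ on $\cU$, the forcing $f_\ell:=-Pu^{(0)}_\ell$ is polyhomogeneous with leading real exponent $c+1$ (modulo a $[P,\chi]$-term compactly supported in $M^\circ$). Apply Theorem~\ref{ThmAs} to $f_\ell$: the hypothesis $\alpha_{\coker}(f_\ell)\geq c$ follows from integration by parts, since for any $u^*\in\ker P^*\cap\cA^{-\alpha'-w}(M;F)$ with $\alpha'<c$ the pairing of $f_\ell$ against $u^*$ with respect to the weight-$w$ density $\mu$ converges absolutely at $\pa M$ (weight count $\rho^{c-\alpha'}\tfrac{\dd\rho}{\rho}$), whence $\la f_\ell,u^*\ra=-\la u^{(0)}_\ell,P^*u^*\ra=0$. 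Because $c\notin\Re\surjspecb(P)$, the parameter $\alpha_0$ in Theorem~\ref{ThmAs} satisfies $\alpha_0>c$, so the theorem produces $v_\ell\in\cA_\phg^{\cE(P,\cF)\cup\cE(P,\alpha_0)}(M;E)$ with $Pv_\ell=f_\ell$; both index sets have leading real exponent strictly greater than $c$. Setting $\tilde u_\ell:=u^{(0)}_\ell+v_\ell\in K^\alpha$, the $\rho^{iz_0}$-coefficient of the polyhomogeneous expansion of $\tilde u_\ell$ at $\pa M$ equals $u_\ell$ (with no contribution from $v_\ell$).

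Now any finite relation $\sum_\ell c_\ell\tilde u_\ell\in K^\beta\subset\cA^\beta(M;E)$ forces the $\rho^{iz_0}$-coefficient of its boundary expansion to vanish (since $c<\beta$); but this coefficient is $\sum_\ell c_\ell u_\ell$ in $\CI(\pa M;E|_{\pa M})$, so all $c_\ell=0$ by linear independence of the $u_\ell$. Hence $\{[\tilde u_\ell]\}_{\ell\in\N}$ is an infinite linearly independent family in $K^\alpha/K^\beta$. The main technical point is the genericity arrangement $c\notin\Re\surjspecb(P)$, which is precisely what forces the correction $v_\ell$ to decay strictly faster than $\rho^{iz_0}u_\ell$ and thereby preserves the leading coefficient of $\tilde u_\ell$ as the witness for the independence argument; the cokernel-orthogonality needed to invoke Theorem~\ref{ThmAs} reduces to the short weight-count integration by parts indicated above.
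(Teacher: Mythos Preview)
Your proof is correct, but it takes a different route from the paper's direct argument for this theorem. The paper argues by contradiction: assuming $K^{\infty,\alpha}=K^{\infty,\beta}$ (after reducing to b-Sobolev kernels and normalizing $\beta=0$), the open mapping theorem yields an estimate of the form
\[
  \|u\|_{\Hb^m}\leq C\bigl(\|Pu\|_{\Hb^0}+\|u\|_{\Hb^{m',\alpha}}\bigr),
\]
which is then tested against oscillatory sections $u_{\delta,\lambda}=\phi(\rho/\delta)\psi(y)\rho^{i\lambda\xi}e^{i y\cdot\lambda\eta}$ with $\delta=\lambda^{-\gamma}$, forcing injectivity of $\sigmab^m(P)(0,y;\xi,\eta)$ and hence a contradiction. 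Your argument is instead \emph{constructive}: you choose $c\in[\alpha,\beta)\setminus\Re\surjspecb(P)$, exploit the infinite-dimensional kernel of $N(P,-ic)$ on $\pa M$, and invoke Theorem~\ref{ThmAs} to upgrade each approximate kernel element $\chi\rho^c u_\ell$ to a true one whose leading $\rho^c$-coefficient is preserved. This is essentially the mechanism behind the paper's proof of Theorem~\ref{ThmAsIndetPhg} (which the paper explicitly notes contains Theorem~\ref{ThmAsIndetCon} as a special case), specialized to a single real exponent; the genericity choice $c\notin\Re\surjspecb(P)$ lets you take $J=0$ and avoid logarithmic corrections.

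The trade-offs are as follows. Your approach gives more: it explicitly exhibits polyhomogeneous nullspace elements with prescribed leading term, not merely an abstract dimension count. On the other hand, it relies on the full strength of Theorem~\ref{ThmAs}, whereas the paper's direct proof uses only the semi-Fredholm theory of \S\ref{Sb} and an open-mapping/testing argument. The paper emphasizes that this more elementary route generalizes beyond smooth-coefficient b-operators to, for instance, uniform operators on cylindrical ends that merely asymptote (say exponentially) to a translation-invariant model---a setting where the polyhomogeneous machinery underpinning Theorem~\ref{ThmAs} is unavailable.
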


Theorem~\ref{ThmAsIndetCon} is a special case of Theorem~\ref{ThmAsIndetPhg} below; we include it nonetheless, since it (and its proof) generalizes to a larger class of operators $P$, such as uniform differential operators on manifolds with cylindrical ends $[1,\infty)_r\times Y$ which asymptote to an $r$-translation invariant operator at an exponential rate as $r\to\infty$ and have (uniformly) surjective principal symbols. (The relationship is via $\rho=e^{-r}$, so $\pa_r=-\rho\pa_\rho$, with totally characteristic operators having smooth coefficients in $\rho\geq 0$, whereas uniform operators have coefficients which are uniformly bounded on $[1,\infty)\times Y$ together with all derivatives along $\pa_r$.)

\begin{proof}[Proof of Theorem~\usref{ThmAsIndetCon}]
  Fix a density of weight $0$ on $M$ which in a collar neighborhood of $\pa M$ is given by $|\frac{\dd\rho}{\rho}\nu|$ where $\nu$ is a positive density on $\pa M$. Since $\Hb^{\infty,\alpha}(M;E)\subset\cA^\alpha(M;E)\subset\Hb^{\infty,\alpha-\eps}(M;E)$ for all $\eps>0$, it suffices to prove the claim for $K^{s,\alpha}/K^{s,\beta}$, $K^{s,\alpha}:=\ker_{\Hb^{s,\alpha}(M;E)}P$, when $s=\infty$. By subdividing the interval $(\alpha,\beta)$ into any finite number of nonempty subintervals, it further suffices to prove that $K^{\infty,\alpha}/K^{\infty,\beta}$ is nontrivial, i.e.\ that $K^{\infty,\alpha}\supsetneq K^{\infty,\beta}$. Furthermore, we may increase $\alpha$ and decrease $\beta$ by arbitrarily small amounts to as to ensure that $\alpha,\beta\notin\Re\surjspecb(P)$. Finally, upon conjugating $P$ by $\rho^{-\beta}$, we may reduce to the case that $\alpha<0=\beta$.

  We have $\Hb^m(M;E)=\ker_{\Hb^m(M;E)}P\oplus P^*(\Hb^{2 m}(M;F))$ since both summands are closed by Proposition~\ref{PropbFred}. Moreover, there exists $C>0$ so that
  \begin{equation}
  \label{EqAsIndetCon}
    \|u\|_{\Hb^m(M;E)} \leq C\|P u\|_{\Hb^0(M;E)},\qquad u\in P^*(\Hb^{2 m}(M;F)).
  \end{equation}
  We similarly have the splitting $\Hb^\infty(M;E)=K^{\infty,0}\oplus P^*(\Hb^\infty(M;F))$, for if $\Hb^\infty(M;E)\ni u=u'+P^*u''$ with $u'\in\Hb^m(M;E)$, $P u'=0$, and $u''\in\Hb^{2 m}(M;F)$, then $P u=P P^*u''\in\Hb^\infty(M;F)$ implies $u''\in\Hb^\infty(M;F)$, and thus also $u'=u-P^*u''\in\Hb^\infty(M;E)$. Moreover, $P^*(\Hb^\infty(M;F))$ is closed since  $u_j\in\Hb^\infty(M;F)$, $P^*u_j\to f\in\Hb^\infty(M;E)$ implies the existence of $u\in\Hb^{2 m}(M;F)$ with $f=P^*u$, and thus $u\in\Hb^\infty(M;F)$ by elliptic regularity. 

  Suppose now, for the sake of contradiction, that $K^{\infty,\alpha}=K^{\infty,0}$. Then the continuous inclusion map $(K^{\infty,0},\|\cdot\|_{\Hb^{\infty,0}(M;E)})\to(K^{\infty,\alpha},\|\cdot\|_{\Hb^{\infty,\alpha}(M;E)})$ is a bijection of Fr\'echet spaces, and hence an isomorphism; therefore, there exists $m'\in\R$ (which may well be larger than $m$) so that
  \[
    \|u\|_{\Hb^m(M;E)} \leq C\|u\|_{\Hb^{m',\alpha}(M;E)},\qquad u\in K^{\infty,0}.
  \]
  Together with~\eqref{EqAsIndetCon} for $u\in P^*(\Hb^\infty(M;F))$, we obtain
  \begin{equation}
  \label{EqAsIndetEst}
    \|u\|_{\Hb^m(M;E)} \leq C\left(\|P u\|_{\Hb^0(M;E)} + \|u\|_{\Hb^{m',\alpha}(M;E)}\right),\qquad u\in\Hb^m(M;E).
  \end{equation}
  The proof is complete once we show that this implies the injectivity of $\sigmab(P)$ over $\pa M$. To this end, we work in a local chart $\R^{n-1}_y$ on $\pa M$. Let $\phi\in\CIc((0,1))$ and $\psi\in\CIc(\pa M;E|_{\pa M})$ with support in the chart so that $\int_0^1|\phi(\rho)|^2\,\frac{\dd\rho}{\rho}=1$ and $\|\psi\|_{L^2(\pa M;E|_{\pa M})}=1$. Let $(0,0)\neq(\xi,\eta)\in\R\times\R^{n-1}$. We apply the estimate~\eqref{EqAsIndetEst} to $u_{\delta,\lambda}(\rho,y)=\phi(\frac{\rho}{\delta})\psi(y)\rho^{i\lambda\xi}e^{i y\cdot\lambda\eta}$ where $\delta\in(0,1)$ and $\lambda>1$; there then exist $c,C'>0$ (with $c$ independent of $\phi,\psi$) so that
  \begin{align*}
    c\lambda^m - C'\lambda^{m-1} &\leq C\biggr( \left\| \sigmab^m(P)(\rho,y;\lambda\xi,\lambda\eta)\phi\left(\frac{\rho}{\delta}\right)\psi(y)\right\|_{L^2(M;F)} + C'\lambda^{m-1} + C'\delta^{-\alpha}\lambda^{m'} \biggr).
  \end{align*}
  Here we use that $\rho^{-\alpha}\phi(\frac{\rho}{\delta})=\delta^{-\alpha}(\frac{\rho}{\delta})^{-\alpha}\phi(\frac{\rho}{\delta})$ has norm bounded by $C'\delta^{-\alpha}$ as a multiplication operator on any weighted b-Sobolev space. Take $\delta=\lambda^{-\gamma}$ where $\gamma>0$ is chosen large enough such that $m'+\alpha\gamma<m$.\footnote{The estimate~\eqref{EqAsIndetEst} can be viewed as an estimate for $u$ on second microlocal b-Sobolev spaces $H_{\bop,\gamma}^{m,\nu,\alpha}(M;E)$ which are defined via testing with b-pseudodifferential operators whose symbols are conormal on an non-homogeneous (the degree being $\gamma>0$) blow-up of $\ol{\Tb^*}M$ at $\Sb^*_{\pa M}M$, with weight $m,\nu,\alpha$ at the lift of $\Sb^*M$, the front face, and the lift of $\ol{\Tb^*_{\pa M}}M$, respectively. (Such spaces with $\gamma=1$, were introduced in \cite{VasyLowEnergy}). Then the first two orders of $\Hb^{m',\alpha}(M;E)=H_{\bop,2}^{m',m'+\gamma\alpha,\alpha}(M;E)$ are less than the corresponding orders of $\Hb^m(M;E)=H_{\bop,2}^{m,m,\alpha}(M;E)$. From this perspective, the functions $u_{\delta,\lambda}$ with $\delta=\lambda^{-\gamma}$ are probing the principal symbol of $P$ at the front face. Since $P$ is a smooth coefficient operator on $M$, this is the same as probing its principal symbol over $\pa M$.}  Dividing by $\lambda^m$ and letting $\lambda\to\infty$ gives
  \[
    c \leq C\|\sigmab^m(P)(0,\cdot;\xi,\eta)\psi(\cdot)\|_{L^2(M;F)}.
  \]
  Since $\psi$ (with norm $1$ and support in the coordinate chart on $\pa M$) is arbitrary, this implies the injectivity of $\sigmab^m(P)(0,y,\xi,\eta)$ for all $y$ in the coordinate chart.
\end{proof}

\begin{thm}[Arbitrary index sets for elements of the kernel]
\label{ThmAsIndetPhg}
  Let $P\in\Diffb^m(M;E,F)$ be underdetermined-elliptic. Then for any index set $\cE\subset\C\times\N_0$, there exist an index set $\cE'\supset\cE$ and a solution $u\in\cA_\phg^{\cE'}(M;E)$ of $P u=0$ with the property that $u\notin\cA_\phg^\cF(M;E)$ for all index sets $\cF$ which do not contain $\cE$. In this sense, polyhomogeneous elements of $\ker P$ can have arbitrary index sets.
\end{thm}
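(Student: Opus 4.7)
The plan is to produce, for each $(\iota,k)\in\cE$, a polyhomogeneous element of $\ker P$ whose expansion at $\pa M$ has a nonvanishing $\rho^\iota(\log\rho)^k$ coefficient, and then to form an asymptotic sum---with rescalings chosen inductively to prevent subleading contributions of earlier summands from cancelling leading coefficients of later summands. I sketch the construction for a single pair $(\iota_0,k_0)$, writing $\iota_0=iz_0$.

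Since $P$ is underdetermined elliptic, the remark after Lemma~\ref{LemmaAsPair} gives $\ord(P,z_0)=\infty$ and shows that $F_{[k_0]}(P,z_0)$ is infinite-dimensional, so one may select $v^{(0)}\in F_{k_0}(P,z_0)$ whose top $(\log\rho)^{k_0}$ coefficient $v^{(0)}_{k_0}$ is nonzero. Cutting off by $\chi\in\CIc([0,1)_\rho\times\pa M)$ identically $1$ near $\rho=0$, the forcing
\[
P(\chi v^{(0)})=\chi\bigl(P-N(P)\bigr)v^{(0)}+[P,\chi]v^{(0)}
\]
is polyhomogeneous with leading real part strictly greater than $\Re\iota_0$. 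Iterating the formal-solution procedure from the proof of Proposition~\ref{PropAs}, which at each step solves the normal operator via Corollary~\ref{CorAsNorm}, and summing asymptotically yields $u^\infty\in\cA_\phg^{\cE''}(M;E)$ with $Pu^\infty\in\CIdot(M;F)$ and $(\iota_0,k_0)$ coefficient equal to $v^{(0)}_{k_0}$.

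The main obstacle is that the Schwartz error $g:=Pu^\infty$ need not lie in $P(\CIdot(M;E))$, that is, need not annihilate $\ker_{\CmI}P^*$, so Corollary~\ref{CorbSchwartz} cannot be applied directly. To address this, I would adapt the scheme in the proof of Proposition~\ref{PropAsSchw}: enumerate the real parts in $\Re\surjspecb(P)$ strictly greater than $\Re\iota_0$ as $\alpha_1<\alpha_2<\cdots$, and at the $\ell$-th stage add to $u^\infty$ a polyhomogeneous term at order $\alpha_\ell$ whose leading coefficient lies in the infinite-dimensional space of formal $N(P)$-nullspace elements at that order, chosen so as to kill the new orthogonality obstruction against the finite-dimensional space of $u^*\in\ker P^*$ with leading asymptotics of matching weight. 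The existence of such a correction at each stage rests on the surjectivity half of the nondegenerate boundary pairing of Lemma~\ref{LemmaAsPair2}, combined with the infinite-dimensionality forced by underdetermined ellipticity; crucially, these corrections at orders $\alpha_\ell>\Re\iota_0$ do not disturb the $(\iota_0,k_0)$ coefficient $v^{(0)}_{k_0}$ of $u^\infty$.

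Once $g$ has been arranged to annihilate $\ker_{\CmI}P^*$, Corollary~\ref{CorbSchwartz} supplies $w\in\CIdot(M;E)$ with $Pw=g$, and $u_{(\iota_0,k_0)}:=u^\infty-w$ is the sought element. Finally, enumerating $\cE=\{(\iota_j,k_j)\}_{j\geq 0}$ and forming an asymptotic sum $u\sim\sum_j c_j u_{(\iota_j,k_j)}$ with constants $c_j$ chosen inductively to preserve nonvanishing of each $(\iota_j,k_j)$ coefficient against the subleading expansion of the previously assembled sum yields $u\in\cA_\phg^{\cE'}(M;E)$, for an index set $\cE'\supset\cE$, with $Pu=0$ and the nonvanishing property required by the theorem.
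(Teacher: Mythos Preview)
Your overall strategy matches the paper's, but there is a genuine gap in the step where you correct the Schwartz error $g=P u^\infty$. You enumerate only those real parts of $\surjspecb(P)$ which are \emph{strictly greater} than $\Re\iota_0$ and add corrections there. This overlooks the case $\Re\iota_0\in\Re\surjspecb(P)$: for $u^*\in\ker P^*$ with leading asymptotics at weight $-\Re\iota_0-w$, the pairing $\langle g,u^*\rangle$ equals the boundary pairing $B(v^{(0)},u^*_{\rm lead})$ of Proposition~\ref{PropAsSchw} (the subleading parts of $u^\infty$ contribute nothing, by integration by parts), and this is in general \emph{nonzero}. Your higher-order corrections cannot repair this: any polyhomogeneous term added at order $\alpha_\ell>\Re\iota_0$ pairs to zero against such $u^*$ (again by integration by parts), so the obstruction at level $\Re\iota_0$ persists and Corollary~\ref{CorbSchwartz} remains inapplicable.

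To fix this you would need to add a correction at level $\Re\iota_0$ as well, and then argue that it does not cancel the $(\log\rho)^{k_0}$ coefficient of $v^{(0)}$---which is delicate when $k_0<\ord(P^*,\ol{z_0}+i w)$, since the correction lives in $F_{J-1}(P,z_0)$ with $J=\ord(P^*,\ol{z_0}+i w)$ and may well have a $(\log\rho)^{k_0}$ term. The paper sidesteps this entirely: it picks $u'\in F_J(P,z_0)$ with $J\geq\max(k_0,\ord(P^*,\ol{z_0}+i w))$ from the outset, and then applies Theorem~\ref{ThmAs} directly to $f=-P(\chi u')$. Theorem~\ref{ThmAs} already packages the full obstruction-killing procedure (including at level $\Re\iota_0$), and the index set $\cE(P,\alpha_0)$ it produces has log-power at most $\ord(P^*,\ol{z_0}+i w)-1<J$ at $\rho^{i z_0}$, so the top $(\log\rho)^J$ term of $u'$ survives intact. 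Since $J\geq k_0$, any index set containing $(i z_0,J)$ contains $(i z_0,k_0)$, which is what the theorem demands.
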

\begin{proof}
  Let $(z,k)\in\cE$. Since the principal symbol of $P$, and thus of $N(P,z)$ for all $z\in\C$, is surjective but not injective, we have $\ord(P,z)=\infty$ (as remarked after Lemma~\ref{LemmaAsPair}). This allows us to pick $u'\in F_J(P,z)$, $J\geq\max(k,\ord(P^*,\bar z+i w))$; then $f=-P(\chi u')$ is polyhomogeneous, lies in $\cA^{\Re z+1-\eps}(M;F)$ for all $\eps>0$, and integrates to $0$ against all $u^*\in\bigcap_{\eps>0}\cA^{-\Re z-w+\eps}(M;F)$ with $P^*u^*=0$ (as follows from an integration by parts). Therefore, Theorem~\ref{ThmAs} applies and produces a polyhomogeneous $u''$ with $P u''=f$ with the property that the largest power of $\log\rho$ in the term $\rho^{i z}(\log\rho)^\ell$ of the expansion of $u''$ is $\ell<\ord(P^*,\bar z+i w)\leq J$; hence $u_z:=\chi u'+u''\in\bigcap_{\eps>0}\cA^{\Re z-\eps}(M;E)\cap\ker P$ is polyhomogeneous, and its index set contains $(z,k)$. We may then take $u=\sum_{(z,k)\in\cE}\eps_z u_z$ where $\eps_z$ tends to $0$ sufficiently fast as $|z|\to\infty$ so as to ensure convergence of the sum in a space of polyhomogeneous conormal functions, and to ensure the absence of any cancellations among the terms in the expansions of the $u_z$ which would reduce the size of the index set of $u$.
\end{proof}

%%%%%%%%%%%%%%%%%%%%%%%%%%%%%%%%%%%%%%%%%%%%%%%%%%%%%%%%%%%%%%%%%%%%%%
\section{Applications}
\label{SA}

%%%%%%%%%%%%%%%%%%%%%%%%%%%%%%%%%%%%%%%%%%%%%%%%%%
\subsection{Geometric operators on asymptotically Euclidean spaces}
\label{SsAG}

Let $n\in\N$, $n\geq 2$. Let $M^\circ$ be a smooth connected $n$-dimensional manifold without boundary; we assume that there exists a compact set $K\subset M^\circ$ so that $M^\circ\setminus K\cong\R^n\setminus B(0,R)$ with $R>0$. Denote by
\[
  g\in\CI(M^\circ;S^2 T^*M^\circ)
\]
an \emph{asymptotically flat metric}: by this we mean that $g$ is Riemannian, and on $\R^n\setminus B(0,R)$ the metric coefficients $g_{i j}$ (in the standard coordinates $x$ on $\R^n$) are equal to $\delta_{i j}$ (Euclidean metric) plus error terms which are smooth functions of $\rho=|x|^{-1}\geq 0$ and $\omega=\frac{x}{|x|}\in\Sph^{n-1}$ which vanish at $\rho=0$.\footnote{The assumptions on $g_{i j}-\delta_{i j}$ can be weakened. For mapping properties on Schwartz spaces, conormality (i.e.\ infinite b-regularity) on $M$ (defined below) suffices, whereas for polyhomogeneous results one only needs to assume the polyhomogeneity of $g_{i j}-\delta_{i j}$ at $\rho=0$, though the index sets of the solutions would get additional contributions from the index set of $g_{i j}-\delta_{i j}$.}

Define $M$ as the radial compactification of $M^\circ$ at infinity, i.e.\ $M=(M^\circ\cup([0,R^{-1})_\rho\times\Sph^{n-1}))/\sim$ where we identify $x=(x^1,\ldots,x^n)\in\R^n\setminus B(0,R)$ with $(\rho,\omega)=(|x|^{-1},\frac{x}{|x|})$. Thus, $g_{i j}\in\CI(M\setminus K)$ and $g_{i j}-\delta_{i j}\in\rho\CI(M\setminus K)$, where we write $\rho\in\CI(M)$ for a function which is positive on $M^\circ$ and equal to $|x|^{-1}$ near $\pa M$. In the case $M^\circ=\R^n$, we then have $\CIdot(M)=\sS(\R^n)$ (Schwartz space) and $\CmI(M)=\sS'(\R^n)$ (tempered distributions). 

Let $\Tsc M\to M$ be the smooth vector bundle which equals $T M^\circ$ over $M^\circ$, and which in the collar neighborhood $\cU=[0,R^{-1})_\rho\times\Sph^{n-1}$ of $\pa M$ is trivialized by $\Tsc_\cU M=\cU\times\R^n$, where $(x,v)\in(\cU\setminus\pa M)\times\R^n$ is identified with $v^j\pa_{x^j}\in T_x M^\circ$. That is, $\{\pa_{x^j}\colon 1\leq j\leq n\}$ extends from $M^\circ\setminus K$ to a smooth frame of $\Tsc M$ over $M\setminus K$. (This is the \emph{scattering tangent bundle} in the terminology of \cite{MelroseEuclideanSpectralTheory}.) Write $\Tsc^*M$ for the dual bundle of $\Tsc M$; thus, $\{\dd x^j\colon 1\leq j\leq n\}$ is a smooth frame over $M\setminus K$. Then $g\in\CI(M;S^2\,\Tsc^*M)$. The metric volume density $|\dd g|$ has weight $w=-n$.

\begin{lemma}[Connection]
\label{LemmaAGNabla}
  The Levi-Civita connection $\nabla$ of $g$ satisfies
  \[
    \nabla\in\rho\Diffb^1(M;\Tsc M,\Tsc^*M\otimes\Tsc M).
  \]
  The normal operator $N(\nabla)$ is the Levi-Civita connection (expressed in inverse polar coordinates) on Euclidean space without the origin. Moreover, $\dd\in\rho\Diffb^1(M;\ul\R,\Tsc^*M)$ where $\ul\R=M\times\R$ is the trivial bundle.
\end{lemma}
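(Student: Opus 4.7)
The proof is a direct calculation in inverse polar coordinates. The organizing principle is that in the scattering frames $\{\partial_{x^j}\}$ of $\Tsc M$ and $\{\dd x^j\}$ of $\Tsc^*M$, each Cartesian derivative $\partial_{x^j}$ carries an extra factor of $\rho$ upon conversion to a b-vector field on $M$ near infinity.

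First I would establish the statement about $\dd$ together with the key identity underlying everything. Writing $\omega^j = x^j/|x|$ and using $\rho = |x|^{-1}$, the chain rule gives $\partial_{x^j}\rho = -\rho^2\omega^j$ and $\partial_{x^j}\omega^k = \rho(\delta^{jk}-\omega^j\omega^k)$, hence
\begin{equation*}
  \partial_{x^j} = \rho\, W_j, \qquad W_j := -\omega^j(\rho\partial_\rho) + \sum_k(\delta^{jk}-\omega^j\omega^k)\partial_{\omega^k} \in \Vb(M\setminus K),
\end{equation*}
with coefficients depending only on $\omega$ (and in particular independent of $\rho$). For $f\in\CI(M)$ this yields $\dd f = \sum_j (\partial_{x^j}f)\,\dd x^j = \rho\sum_j W_j(f)\,\dd x^j$ in the $\Tsc^*M$-frame, which is exactly the assertion $\dd\in\rho\Diffb^1(M;\ul\R,\Tsc^*M)$.

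For the Levi-Civita connection, I would next estimate the Christoffel symbols. The assumption $g_{ij}-\delta_{ij}\in\rho\CI(M\setminus K)$ combined with $\partial_{x^m}=\rho W_m$ immediately yields $\partial_{x^m}g_{ij}\in\rho^2\CI$, while $g^{ij}-\delta^{ij}\in\rho\CI$ follows from a Neumann expansion; the standard formula for $\Gamma^k_{ij}$ then gives $\Gamma^k_{ij}\in\rho^2\CI$. Writing $V = v^i\partial_{x^i}$ with $v^i\in\CI(M)$, one computes
\begin{equation*}
  \nabla V = \sum_{i,j}(\partial_{x^j}v^i + \Gamma^i_{jk}v^k)\,\dd x^j\otimes\partial_{x^i} = \rho\sum_{i,j}\bigl(W_j(v^i) + \rho^{-1}\Gamma^i_{jk}v^k\bigr)\,\dd x^j\otimes\partial_{x^i},
\end{equation*}
and since $\rho^{-1}\Gamma^i_{jk}\in\rho\CI\subset\CI$, the bracketed expression is a b-differential operator of order $1$ with smooth coefficients. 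This gives $\nabla\in\rho\Diffb^1(M;\Tsc M,\Tsc^*M\otimes\Tsc M)$.

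Finally, the normal operator (of the b-operator $\rho^{-1}\nabla$ obtained after factoring out $\rho$) is read off by setting $\rho=0$ in the bracketed coefficients: the $\rho^{-1}\Gamma^i_{jk}$ terms vanish, leaving $\sum_{i,j} W_j(v^i)\,\dd x^j\otimes\partial_{x^i}$ with purely $\omega$-dependent, hence dilation-invariant, coefficients. Under the identification of $[0,\infty)_\rho\times\Sph^{n-1}$ with the radial compactification of $\R^n\setminus\{0\}$ at infinity via $(\rho,\omega)\mapsto\rho^{-1}\omega$, this is exactly the flat Levi-Civita connection expressed through $\partial_{x^j}=\rho W_j$, i.e.\ the Euclidean Levi-Civita connection in inverse polar coordinates. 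The only real obstacle is bookkeeping: one must consistently separate the scattering trivializations of $\Tsc M$, $\Tsc^*M$ (in which the matrix coefficients are computed) from the b-structure on $M$ (which governs regularity and the meaning of the normal operator), but no conceptual difficulty arises.
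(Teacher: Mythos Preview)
Your proposal is correct and follows the same approach as the paper: express $\partial_{x^j}$ as $\rho$ times a b-vector field near $\partial M$, deduce $\Gamma^k_{ij}\in\rho^2\CI$ from $g_{ij}-\delta_{ij}\in\rho\CI$, and assemble the conclusion. The only cosmetic difference is that the paper works in honest local projective coordinates $\tilde\rho=1/x^1$, $y^k=x^k/x^1$ on a chart of $\partial M$ rather than the redundant inverse-polar variables $(\rho,\omega^1,\ldots,\omega^n)$ you use, and you spell out the normal operator identification more explicitly than the paper does.
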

\begin{proof}
  Over $M^\circ$, this merely states that $\nabla$ is a smooth coefficient differential operator. Near $\pa M$, we work in local coordinates. In the region where $x^1\geq\delta|x^k|$, $k=2,\ldots,n$, $\delta>0$, smooth coordinates are $\tilde\rho=\frac{1}{x^1}\geq 0$ and $y^k=\frac{x^k}{x^1}$. Thus $g_{i j}=\delta_{i j}+\tilde\rho\tilde g_{i j}(\tilde\rho,y)$ where $\tilde g_{i j}$ is smooth. Since $\pa_{x^\ell}\tilde\rho=-\delta_{\ell 1}\tilde\rho^2\in\tilde\rho^2\CI$ and $\pa_{x^\ell}y^k\in\tilde\rho\CI$, we have $\pa_{x^\ell}g_{i j}\in\tilde\rho^2\CI$, and therefore the Christoffel symbols $\Gamma_{k\ell}^j$ of $g$ (in $x$-coordinates) lie in $\tilde\rho^2\CI$. Since
  \begin{equation}
  \label{ItAGDeriv}
    \pa_{x^1}=-\tilde\rho(\tilde\rho\pa_{\tilde\rho}+y^k\pa_{y^k}),\qquad
    \pa_{x^k}=\tilde\rho\pa_{y^k}
  \end{equation}
  lie in $\tilde\rho\Diffb^1$, we conclude that
  \[
    \nabla(v^j\pa_{x^j}) = (\pa_{x^\ell}v^j+\Gamma_{k\ell}^j v_k)\,\dd x^\ell\otimes\pa_{x^j}
  \]
  is of the stated class. The membership for $\dd$ follows from $\pa_{x^j}\in\rho\Diffb^1$.
\end{proof}

Therefore, the results in~\S\S\ref{Sb}--\ref{SAs} apply to geometric operators on $(M^\circ,g)$ under appropriate assumptions on their principal symbols. The following is a small selection of simple such results.\footnote{Some parts of these results are quite elementary; we state them only as illustrations of our general results.}

\begin{thm}[Divergence on 1-forms]
\label{ThmAGdiv}
  Let $g$ be an asymptotically flat metric on $M$. Denote by $\delta_g$ the codifferential, and consider the equation
  \[
    \delta_g\omega=u.
  \]
  \begin{enumerate}
  \item\label{ItAGdivSchw} Given $u\in\CIdot(M)$, there exists a solution $\omega\in\CIdot(M;\Tsc^*M)$ (so the components $\omega_i=\omega(\pa_{x^i})$ are rapidly decaying as $|x|\to\infty$) if and only if $\int_M u\,\dd g=0$. If this condition is violated, then there still exists a solution $\omega\in\rho^{n-1}\CI(M;\Tsc^*M)$.
  \item\label{ItAGdivPhg} More generally, suppose $u\in\cA_\phg^\cF(M)$ where $\cF\subset\C\times\N_0$ is an index set; let $\alpha_\cF=\min_{(z,0)\in\cF}\Re z$. If $\alpha_\cF>n$, then there exists a solution $\omega\in\cA_\phg^{\cF-1}(M;\Tsc^*M)$ when $\int_M u\,\dd g=0$, and $\omega\in\cA_\phg^{\cF-1}(M;\Tsc^*M)+\rho^{n-1}\CI(M;\Tsc^*M)$ otherwise; if $\alpha_\cF\leq n$, then there exists a solution $\omega\in\cA_\phg^{\cE-1}(M;\Tsc^*M)+\rho^{n-1}\CI(M;\Tsc^*M)$ where $\cE=\{(z+j,k+\ell)\colon (z,k)\in\cF,\ j\in\N_0,\ \ell\leq\ell(z,j)\}$ where $\ell(z,j)=1$ when $z\in n-\N_0$ and $z+j\in n+\N_0$, and $\ell(z,j)=0$ otherwise.
  \end{enumerate}
\end{thm}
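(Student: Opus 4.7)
The plan is to recognize $\delta_g$ as an underdetermined elliptic b-type operator and then invoke Corollary~\ref{CorbSchwartz} for part~\eqref{ItAGdivSchw} and Theorem~\ref{ThmAs} for part~\eqref{ItAGdivPhg}. The bulk of the work will be identifying the surjective boundary spectrum of $\delta_g$ and the resulting orthogonality constraint on the right-hand side. First I would verify using Lemma~\ref{LemmaAGNabla} that $\delta_g=-\tr_g\nabla\in\rho\Diffb^1(M;\Tsc^*M,\ul\R)$, so that after an appropriate conjugation by a power of $\rho$ (for instance, passage to $P:=\rho^{-1}\delta_g\in\Diffb^1$) the operator fits the framework of~\S\ref{Sb}. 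Its scattering principal symbol, contraction with a nonzero covector, is surjective but not injective for $n\geq 2$. The formal adjoint with respect to $|\dd g|$ (a density of weight $w=-n$) is, up to a factor of $\rho$, the exterior derivative $d$.

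Next I would compute the surjective boundary spectrum. Using the coordinate formulae from the proof of Lemma~\ref{LemmaAGNabla}, one checks that the normal operator of $d$ annihilates $\rho^{i\lambda}(\log\rho)^k f$, $f\in\CI(\Sph^{n-1})$, only for $\lambda=0$, $k=0$, and $f$ constant. Translating via the factor of $\rho$ relating $\delta_g$ to its natural b-realization, the weight $w=-n$, and the identification between the scattering and b-cotangent bundles, one obtains a single leading resonance in $\surjSpecb(\delta_g)$ which matches the sharp threshold $\rho^{n-1}$ for the scattering-frame decay of $\omega$. This is consistent with the explicit check that $\omega=r^{-(n-1)}\dd r$ lies in $\ker\delta_\eucl$ on Euclidean space.

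Part~\eqref{ItAGdivSchw} would then follow from Corollary~\ref{CorbSchwartz}: since $M$ is connected, $\ker_{\CmI(M)}d=\C\cdot 1$, so the image of $\delta_g$ on $\CIdot(M;\Tsc^*M)$ equals the annihilator $\{u\colon\int_M u\,\dd g=0\}$ in $\CIdot(M)$. When this orthogonality fails, Theorem~\ref{ThmAs} applied with $\cF=\emptyset$ and $\alpha_{\coker}$ at the threshold produces the stated asymptotic $\omega\in\rho^{n-1}\CI(M;\Tsc^*M)$. Part~\eqref{ItAGdivPhg} would follow analogously from Theorem~\ref{ThmAs}: for $\alpha_\cF>n$, the integrability of $u$ makes the orthogonality meaningful, giving either $\omega\in\cA_\phg^{\cF-1}(M;\Tsc^*M)$ (if $\int u\,\dd g=0$) or the same plus a $\rho^{n-1}\CI$ correction (if not); for $\alpha_\cF\leq n$, Proposition~\ref{PropAs} produces the formal-solution index set, and the logarithmic enlargement $\ell(z,j)=1$ arises exactly when the transport iteration in that proposition picks up the single resonance of $d$, matching the formula in the statement.

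The main obstacle will be the careful bookkeeping required to identify $\surjSpecb(\delta_g)$: one must translate between the natural b-calculus conventions of~\S\S\ref{Sb}--\ref{SAs} and the scattering-frame statement of the theorem, accounting simultaneously for the factor of $\rho$ in $\delta_g\in\rho\Diffb^1$, the bundle identification between $\Tsc^*M$ and $\Tb^*M$, and the weight $w=-n$ of $|\dd g|$. Once it is verified that the kernel of $d$ on the model space contributes the only resonance (up to integer shifts arising in the iterated transport), the remainder of the proof is a direct application of the general theory.
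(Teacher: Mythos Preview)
Your plan is correct and matches the paper's proof essentially step for step: pass to $P=\rho^{-1}\delta_g\in\Diffb^1$, identify $\ker_{\CmI(M)}\dd=\C\cdot 1$, apply Corollary~\ref{CorbSchwartz} for part~\eqref{ItAGdivSchw}, compute $\surjSpecb(\rho^{-1}\delta_g)=\{(n-1,0)\}$ from $\injSpecb(\rho^{-1}\dd)=\{(0,0)\}$ and the weight $w=-n$, and then apply Theorem~\ref{ThmAs} for the remaining claims. One minor simplification: you need not worry about any identification of $\Tsc^*M$ with $\Tb^*M$; the paper treats $\Tsc^*M$ simply as a smooth vector bundle on $M$ (trivialized near $\pa M$ by the $\dd x^j$), so the framework of~\S\ref{Sb} applies directly with $E=\Tsc^*M$ and no bundle conversion is required.
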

\begin{proof}
  The symbol of $(\delta_g)^*=\dd$ near $\pa M$ is injective, as follows from the expressions~\eqref{ItAGDeriv}. Moreover, if $u^*\in\CmI(M)$ and $\dd u^*=0$, then $u^*$ is constant. The first claim of part~\eqref{ItAGdivSchw} then follows by applying Corollary~\ref{CorbSchwartz} to the operator $\rho^{-1}\delta_g\in\Diffb^1(M;\Tsc^*M,\ul\R)$. Moreover, since $\injSpecb(\rho^{-1}\dd)=\{(0,0)\}$, we have $\injSpecb(\dd\circ\rho^{-1})=\{(1,0)\}$ and thus $\surjSpecb(\rho^{-1}\delta_g)=\{(n-1,0)\}$. Therefore, Theorem~\ref{ThmAs} applies to $\dd\circ\rho^{-1}$ (with $\alpha_\cF=+\infty$ and $\alpha_{\rm coker}=\alpha_0=n-1$); note that $\cE(\rho^{-1}\delta_g,n-1)=\{(n-1+j,0)\colon j\in\N_0\}$ and $\cA_\phg^{\cE(\rho^{-1}\delta_g,n-1)}(M;\Tsc^* M)=\rho^{n-1}\CI(M;\Tsc^* M)$. Part~\eqref{ItAGdivPhg} follows similarly from Theorem~\ref{ThmAs}.
\end{proof}

\begin{thm}[Divergence on symmetric 2-tensors]
\label{ThmAGdiv2}
  Let $\delta_g$ be the (negative) divergence on symmetric 2-tensors on $(M^\circ,g)$, and consider the equation
  \[
    \delta_g h=\omega.
  \]
  Let $\omega\in\CIdot(M;\Tsc^*M)$. If $(M^\circ,g)$ does not admit any nontrivial Killing vector fields, or more generally if $\omega$ is orthogonal to all Killing vector fields, then there exists a solution $h\in\CIdot(M;S^2\,\Tsc^*M)$. If this condition is violated, then there exists a solution $h\in\rho^{n-1}\CI(M;S^2\,\Tsc^*M)+\rho^n(\log\rho)\CI(M;S^2\,\Tsc^*M)$.
\end{thm}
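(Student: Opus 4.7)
The plan is to recast $\delta_g h=\omega$ as a b-operator problem and invoke Theorem~\ref{ThmAs}. By Lemma~\ref{LemmaAGNabla}, $\delta_g\in\rho\Diffb^1(M;S^2\,\Tsc^*M,\Tsc^*M)$, so $P:=\rho^{-1}\delta_g\in\Diffb^1$ has surjective principal symbol (as does the Euclidean divergence on symmetric 2-tensors). Writing the equation as $P h=f$ with $f:=\rho^{-1}\omega$, and using that $\CIdot(M)$ is stable under multiplication by $\rho^{-1}$, one has $f\in\CIdot(M;\Tsc^*M)$ and is in the setting of Theorem~\ref{ThmAs} with $\cF=\emptyset$. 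The formal adjoint with respect to $|\dd g|$ (which has weight $w=-n$) is $P^*=(\delta_g)^*\rho^{-1}$, where $(\delta_g)^*$ is, up to a constant, the symmetrized covariant derivative, whose kernel consists of Killing 1-forms.

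Next I would compute $\surjSpecb(P)$. By Lemma~\ref{LemmaAGNabla}, the normal operator of $\nabla$, and hence of $(\delta_g)^*$, agrees with the corresponding Euclidean operator on $\R^n\setminus\{0\}$ in inverse polar coordinates. Its kernel is spanned by Killing 1-forms on $\R^n$: translations $a_i\,\dd x^i$ (bounded, so leading behavior $\rho^0$) and rotations $a_{ij}x^j\,\dd x^i$ with $a_{ij}$ antisymmetric (growing like $|x|=\rho^{-1}$). An element $v_0\in\ker N(P^*,z)$ arises precisely when $\rho^{iz-1}v_0$ matches one of these asymptotics, so $iz-1\in\{0,-1\}$ gives the indicial roots $z=-i$ (translations) and $z=0$ (rotations) of $P^*$, each of order $1$ because the Euclidean Killing classification contains no logarithms. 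Unraveling the definition of $\surjSpecb$ with $w=-n$ yields
\[
  \surjSpecb(P)=\{(n-1,0),\,(n,0)\}.
\]

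For the cokernel, Lemma~\ref{LemmaAsKer} applied to the injective-symbol operator $P^*$ shows that every tempered $u^*\in\ker P^*$ is polyhomogeneous, and writing $u^*=\rho K$ identifies $\ker P^*$ with $\rho\cdot\cK$, where $\cK$ is the (finite-dimensional) space of Killing 1-forms of $(M^\circ,g)$. A direct computation in $L^2(|\dd g|)$ gives
\[
  \la f,\rho K\ra = \la\rho^{-1}\omega,\rho K\ra = \la\omega,K\ra,
\]
so the hypothesis $\omega\perp\cK$ is equivalent to $f\perp\ker P^*$ at every weight. In that case, Theorem~\ref{ThmAs} applies with $\alpha_{\coker}=\alpha_0=+\infty$, $\cE(P,\alpha_0)=\emptyset$, and delivers $h\in\CIdot(M;S^2\,\Tsc^*M)$. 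If the hypothesis fails, the slowest-decaying obstruction corresponds to a translation-type Killing form, which gives a cokernel element in $\cA^1(M;\Tsc^*M)$ and forces $\alpha_{\coker}=n-1\in\Re\surjspecb(P)$; thus $\alpha_0=n-1$.

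Finally I would compute $\cE(P,n-1)$ from~\eqref{EqAsSchwInd}. Starting from $(n-1,0)\in\surjSpecb(P)$, the sum $\sum_{q=1}^j\ord(P^*,\overline{-i(n-1+q)}+iw)=\sum_{q=1}^j\ord(P^*,i(q-1))$ hits the rotation root $0$ only at $q=1$ (contributing $1$), and vanishes for every $q\geq 2$ since $P^*$ has no further indicial roots at $\{i,2i,\ldots\}$; this produces the point $(n-1,0)$ at $j=0$ together with $(n-1+j,0)$ and $(n-1+j,1)$ for $j\geq 1$. Starting from $(n,0)$, the analogous sum vanishes identically and only contributes the log-free points $(n+j,0)$. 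Collecting terms, $\cA_\phg^{\cE(P,n-1)}(M;S^2\,\Tsc^*M)=\rho^{n-1}\CI(M;S^2\,\Tsc^*M)+\rho^n(\log\rho)\CI(M;S^2\,\Tsc^*M)$, which matches the stated conclusion (and in the subcase that $\omega$ is orthogonal to translations but not rotations, one gets $\alpha_0=n$ and the solution lives already in $\rho^n\CI$, a subspace of the above). The main technical obstacle is the exact identification of $\ker N((\delta_g)^*)$ with the span of Euclidean translations and rotations together with the order-one property: if generalized Killing modes or logarithmic indicial contributions appeared at other points, the final index set would acquire additional log powers not present in the statement.
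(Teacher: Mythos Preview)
Your proof is correct and follows the same approach as the paper: reduce to $P=\rho^{-1}\delta_g\in\Diffb^1$, identify $\ker P^*$ with $\rho$ times the Killing 1-forms, compute $\surjSpecb(P)=\{(n-1,0),(n,0)\}$ from the Euclidean translations and rotations, and then apply Corollary~\ref{CorbSchwartz} (for the Schwartz case) and Theorem~\ref{ThmAs} (for the general case). The paper's proof is terser---it stops after recording $\surjSpecb(\rho^{-1}\delta_g)$---whereas you carry out the index set computation $\cE(P,n-1)$ explicitly; your calculation is correct and indeed recovers $\rho^{n-1}\CI+\rho^n(\log\rho)\CI$. One terminological quibble: the cokernel element $\rho K$ for a translation $K$ lies in $\cA^1$ and thus decays \emph{faster} than a rotation-type element ($\in\cA^0$), so calling it the ``slowest-decaying obstruction'' is potentially confusing; what you mean (and what is correct) is that it is the obstruction producing the smallest $\alpha_{\coker}$, hence the slowest-decaying solution $h$.
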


We leave the statement of a polyhomogeneous version of this result to the reader.

\begin{proof}[Proof of Theorem~\usref{ThmAGdiv2}]
  The adjoint $\delta_g^*$ (symmetric gradient) of $\delta_g$ has injective principal symbol; its (tempered) distributional kernel is given by the space of all Killing 1-forms. Thus the first claim follows again from Corollary~\ref{CorbSchwartz}. The normal operator of $\delta_g^*$ is the Euclidean symmetric gradient $\delta^*$, with kernel spanned by the generators of translations ($\dd x^j$) and rotations ($x^i\,\dd x^j-x^j\,\dd x^i$). Thus, $\injSpecb(\rho^{-1}\delta_g^*)=\{(0,0),(-1,0)\}$ and thus $\injSpecb(\delta_g^*\circ\rho^{-1})=\{(0,0),(1,0)\}$, and therefore $\surjSpecb(\rho^{-1}\delta_g)=\{(n-1,0),(n,0)\}$.
\end{proof}

\begin{rmk}[Divergence on Euclidean space, I]
\label{RmkAGdivEucl1}
  If $g$ is the Euclidean metric, or more generally if $g$ is spherically symmetric to leading and subleading order at $\pa M$, then one can avoid a logarithmic term in Theorem~\ref{ThmAGdiv2}, i.e.\ one can find $h\in\rho^{n-1}\CI(M;S^2\,\Tsc^*M)$. Indeed, in the proof of Proposition~\ref{PropAsSchw}, the expansion term corresponding to $(n-1,0)\in\surjSpecb(\rho^{-1}\delta_g)$ can, via an averaging procedure, be taken to be of scalar type $1$ (since the kernel of $N(\rho^{-1}\delta_g^*,0)$ consists of scalar type $1$ 1-forms). The spherical symmetry assumption then ensures that the resulting error subleading term is still of scalar type $1$, and thus orthogonal to the space of vector type $1$ 1-forms which the kernel of $N(\rho^{-1}\delta_g^*,-1)$ is a subspace of, and hence can be solved away without the introduction of a logarithmic term.
\end{rmk}

\begin{rmk}[Divergence on Euclidean space, II]
\label{RmkAGdivEucl2}
  On exact Euclidean space, \cite[Theorem~4]{MaoTaoConstraints} (building on \cite[Proposition~4.1]{OhTataruYangMills3}) produces a family of right inverses of the divergence operators on 1-forms and symmetric 2-tensors which depends on a choice of smooth function on $\Sph^{n-1}$ which encodes localization to conic regions. Acting on rapidly vanishing right hand sides, they all produce different solutions of the divergence equation than the one constructed here. It is an interesting problem to generalize our arguments so as to ensure localization properties of solutions, e.g.\ so that in the context of Theorem~\ref{ThmAGdiv2} the solution $h$ is supported in the same conic region as $\omega$.
\end{rmk}

%%%%%%%%%%%%%%%%%%%%%%%%%%%%%%%%%%%%%%%%%%%%%%%%%%%%%%%%%%%%%%%%%%%%%%
\subsection{Sharp asymptotics for initial data gluing}
\label{SsAI}

\emph{This section is not self-contained}: we shall only indicate how the results of the present paper can be used to obtain more precise asymptotics in the gluing method for the constraint equations in general relativity introduced by the author in \cite{HintzGlueID}. Recall that the constraint equations are $P(\gamma,k)=0$ where $\gamma$ is a Riemannian metric and $k$ is a symmetric 2-tensor on an $n$-dimensional manifold, and
\begin{align*}
  &P(\gamma,k) := (P_1(\gamma,k), P_2(\gamma,k)), \\
  &\qquad P_1(\gamma,k):=R_\gamma-|k|_\gamma^2+(\tr_\gamma k)^2, \qquad
  P_2(\gamma,k):=\delta_\gamma k+\dd(\tr_\gamma k),
\end{align*}
with $R_\gamma$ denoting the scalar curvature of $\gamma$. We first revisit a key result in the construction of a formal solution of the gluing problem:

\begin{prop}[Linearized constraints map around asymptotically flat initial data]
\label{PropAIAF}
  (Cf.\ \cite[Proposition~4.10(2)]{HintzGlueID}.) Let $\hat K\subset\R^n$, $n\geq 3$, be compact (possibly empty) and contained in the open Euclidean ball of radius $\hat R_0>0$ around $0$. Set $\rho_\circ=\la\hat x\ra^{-1}$ on $\R^n$ with standard coordinates denoted $\hat x$. Fix the index set $\cE=\{(n-2+j,0)\colon j\in\N_0\}$, and put
  \[
    \hat\cS = \{ (n-2,0) \} \cup \{ (n-1+j,k) \colon j\in\N_0,\ k=0,1 \}.
  \]
  Let $(\hat\gamma,\hat k)$ be $\cE$-asymptotically flat initial data; that is, $\hat\gamma\in\rho_\circ^{n-2}\CI(\ol{\R^n}\setminus\hat K^\circ;S^2\,\Tsc^*\ol{\R^n})$ and $\hat k\in\rho_\circ^{n-1}\CI(\ol{\R^n}\setminus\hat K^\circ;S^2\,\Tsc^*\ol{\R^n})$ solve the constraint equations $P(\hat\gamma,\hat k)=0$. Denote by $L_{\hat\gamma,\hat k}$ the linearization of $P(-,-)$ at $(\hat\gamma,\hat k)$. If $\cG\subset\C\times\N_0$ is an index set, and if $\hat f\in\cA_\phg^{\cG+2}(\ol{\R^n})$, $\hat j\in\cA_\phg^{\cG+2}(\ol{\R^n};\Tsc^*\ol{\R^n})$ vanish in a neighborhood of $|\hat x|\leq\hat R_0$, then there exist\footnote{We use here the extended union of index sets, $\cE\extcup\cF:=\cE\cup\cF\cup\{(z,k+\ell+1)\colon(z,k)\in\cE,\ (z,\ell)\in\cF\}$.}
  \begin{equation}
  \label{EqAIAF}
    \hat h\in\cA_\phg^{\cG\extcup\hat\cS}(\ol{\R^n};S^2\,\Tsc^*\ol{\R^n}),\quad
    \hat q\in\cA_\phg^{\cG\extcup\hat\cS+1}(\ol{\R^n};S^2\,\Tsc^*\ol{\R^n})
  \end{equation}
  so that $L_{\hat\gamma,\hat k}(\hat h,\hat q)=(\hat f,\hat j)$, and so that $\hat h$ and $\hat q$ vanish in a neighborhood of $|\hat x|\leq\hat R_0$.
\end{prop}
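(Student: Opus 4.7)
\emph{Strategy.}
The plan is to view $L_{\hat\gamma,\hat k}$, near infinity on the radial compactification $\ol{\R^n}$, as an underdetermined elliptic b-differential operator (in the sense of Definition~\ref{Defb}), via the framework of Lemma~\ref{LemmaAGNabla} and a weighted setup accounting for the mixed scattering orders of its two blocks ($dR$ being second order in $h$, while $\delta q+\dd\tr q$ is first order in $q$). Since $\hat\gamma-\delta\in\rho_\circ^{n-2}\CI$ and $\hat k\in\rho_\circ^{n-1}\CI$, the difference $L_{\hat\gamma,\hat k}-L_{\delta,0}$ has coefficients vanishing to order $\rho_\circ^{n-2}$ in the b-calculus, so its normal operator, and hence its surjective boundary spectrum, coincide with those of the Euclidean model $L_{\delta,0}$. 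The proof will then apply Theorem~\ref{ThmAs} after cutting off $(\hat f,\hat j)$ to be supported in $|\hat x|\ge\hat R_0+1$, and enforce the support condition on the solution by a final cutoff step.

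\emph{Boundary spectrum and application of Theorem~\usref{ThmAs}.}
At $(\delta,0)$ the system block-decouples: $L_{\delta,0}(h,q)=(dR(h),\,\delta q+\dd\tr q)$, and the polynomially bounded kernel of $L^*_{\delta,0}$ consists of the Euclidean KIDs, namely affine scalars $u=a+b\cdot\hat x$ (the condition $\operatorname{Hess} u=g\Delta u$ forces $\Delta u=0$ and then $\operatorname{Hess} u=0$) together with affine Killing 1-forms $Y$ (constant translations plus antisymmetric rotations). These appear at scattering orders $\rho_\circ^0$ (constant scalars, translational 1-forms) and $\rho_\circ^{-1}$ (linear scalars, rotational 1-forms); an integration-by-parts computation in the spirit of Theorems~\ref{ThmAGdiv}--\ref{ThmAGdiv2} translates them into the spectral points $(n-2,0)$ (from the constant scalar KID acting against $\hat h$) and $(n-1,0)$ (from linear scalars, translations, and rotations) of $\surjSpecb(L_{\hat\gamma,\hat k})$. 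The obstruction $\Delta\log|\hat x|=(n-2)|\hat x|^{-2}\neq 0$ for $n\ge 3$ shows that $\ord(L^*_{\delta,0},\cdot)$ equals exactly $1$ at each of these spectral points, since no logarithmic companions to the affine KIDs exist. Applying Theorem~\ref{ThmAs} with $\cF=\cG+2$ and $\alpha_0=\alpha_{\coker}=n-2$ (the hypothesis $\la f,u^*\ra=0$ for the relevant $u^*$ holds tautologically because $\hat f,\hat j$ vanish near $\hat K$, after a finite-dimensional corrector supported in $|\hat x|\le\hat R_0+1$ constructed via Proposition~\ref{PropACI}) yields $(\hat h',\hat q')$ whose polyhomogeneous expansions are governed by $\cE(L_{\hat\gamma,\hat k},\cG+2)\cup\cE(L_{\hat\gamma,\hat k},n-2)$. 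The leading orders $(n-2,0)$ for $\hat h$ and $(n-1,0)$ for $\hat q$ are direct from $\surjSpecb$, while the subleading logarithmic terms at $(n-1,1)$ (and $(n,1)$ for $\hat q$) arise in the iterated formal-solution procedure of Proposition~\ref{PropAs} from the resonant coupling of the $\rho_\circ^{n-2}$ perturbation of $(\hat\gamma,\hat k)$ with the $\rho_\circ^{n-2}$ leading asymptotics of $\hat h'$. The overall shift by $+1$ between the index sets of $\hat h$ and $\hat q$ reflects the differing scattering orders of $dP_1$ versus $dP_2$.

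\emph{Support enforcement and main obstacle.}
To upgrade the support condition from ``outside $\hat K$'' to ``outside the ball $|\hat x|\le\hat R_0$'', multiply the produced $(\hat h',\hat q')$ by a cutoff equal to $1$ near infinity and vanishing for $|\hat x|\le\hat R_0+\half$; the commutator error is smooth, compactly supported in an annulus, and vanishes to infinite order at $\pa\ol{\R^n}$, so can be solved away using a right inverse of $L_{\hat\gamma,\hat k}$ acting on tensors compactly supported in a slightly enlarged annulus (obtained via Proposition~\ref{PropACI} applied to a smooth extension of $L_{\hat\gamma,\hat k}$ to a closed manifold, or directly via a $PP^*$ argument with Dirichlet-type data on the outer boundary). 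The main obstacle will be pinning down the bounded log multiplicity of $\hat\cS$---in particular, showing that the log index is exactly $1$ at every shifted order $n-1+j$ and does not grow with $j$ as the naive iteration of formula~\eqref{EqAsSchwInd} would allow. This will require exploiting the block structure of $L_{\delta,0}$ together with parity cancellations (under $\omega\mapsto-\omega$) between scalar-type and vector-type spherical harmonics occurring in successive iterates of the formal-solution procedure, in the spirit of the averaging argument of Remark~\ref{RmkAGdivEucl1}; these cancellations refine the generic bound provided by Theorem~\ref{ThmAs} down to the optimal $\hat\cS$.
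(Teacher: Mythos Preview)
Your overall strategy---recast $L_{\hat\gamma,\hat k}$ as an underdetermined elliptic b-operator via a weight $\hat w=\diag(\rho_\circ^{-2},\rho_\circ^{-1})$, compute $\surjSpecb$ from the Euclidean KIDs, and invoke Theorem~\ref{ThmAs}---matches the paper's. But there are two genuine problems.

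\textbf{The ``main obstacle'' is not an obstacle.} You worry that the naive iteration of~\eqref{EqAsSchwInd} allows the log exponent to grow with $j$, and propose parity cancellations to tame it. This misreads~\eqref{EqAsSchwInd}: the log increment $\ell$ is bounded by $\sum_{q=1}^j \ord(P^*,\cdot)$ evaluated at the shifted points, and since $\surjSpecb(L)=\{(n,0),(n+1,0)\}$ consists of only two points with $\ord=1$ each (and $\ord=0$ everywhere else), this sum is at most $1$ uniformly in $j$. The same remark applies to~\eqref{EqAsInd}. So Theorem~\ref{ThmAs} already produces an index set contained in $\cG\extcup\hat\cS$ (indeed, as the paper notes, slightly smaller); no spherical-harmonic parity argument is needed.

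\textbf{The support enforcement is the actual difficulty, and your proposal for it does not work.} Cutting off $(\hat h',\hat q')$ to vanish in $|\hat x|\leq\hat R_0+\tfrac12$ leaves a smooth compactly supported error in an annulus, but you then need to solve $L_{\hat\gamma,\hat k}(\delta h,\delta q)=\text{error}$ with $(\delta h,\delta q)$ \emph{also} supported outside the ball. Your two suggestions both fail: extending to a closed manifold destroys any control over support, and a $PP^*$ argument with ``Dirichlet-type data'' on an annulus does not localize the solution to the exterior region either. (Also, the cokernel elements---the KIDs---are globally supported, so your parenthetical about orthogonality holding ``tautologically because $\hat f,\hat j$ vanish near $\hat K$'' is incorrect, and Proposition~\ref{PropACI} concerns smooth parameter dependence, not finite-rank correctors.) The paper handles this by working on weighted b-00-Sobolev spaces on the closure of $\{|\hat x|\geq\hat R_0+\eta\}$: these spaces build in rapid vanishing at the inner boundary $|\hat x|=\hat R_0+\eta$ directly, so that the (semi-)Fredholm theory and the analogue of Corollary~\ref{CorbSchwartz} produce solutions with the required support from the outset. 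The formal-solution step (Proposition~\ref{PropAs}) is local near $\pa\ol{\R^n}$ and unaffected; only the Schwartz-remainder step (Proposition~\ref{PropAsSchw} and Corollary~\ref{CorbSchwartz}) needs this modification.
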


The index set $\cE$ is particularly natural since the initial data of (higher-dimensional) Schwarzschild and Kerr metrics have the regularity and decay required for $(\hat\gamma,\hat k)$, as demonstrated in \cite[Lemma~6.1]{HintzGlueID} in the case $n=3$. Of course, one can (but we shall not) similarly sharpen \cite[Proposition~4.10(2)]{HintzGlueID} (and thus also \cite[Theorem~5.2]{HintzGlueID}, as discussed below) for asymptotically flat data with general index sets $\cE$.

\begin{proof}[Proof of Proposition~\usref{PropAIAF}]
  Setting $\hat w=\diag(\rho_\circ^{-2},\rho_\circ^{-1})$, the operator $L:=L_{\hat\gamma,\hat k}\hat w$ is a totally characteristic operator whose principal symbol, in the Douglis--Nirenberg sense as an element of $(\Diffb^{t_j+s_i})_{i,j=1,2}$, $t_1=t_2=0$, $s_1=2$, $s_2=1$, is surjective; see \cite[Lemmas~4.2 and 4.7]{HintzGlueID}. By \cite[Lemma~4.6]{HintzGlueID}, its normal operator is block-diagonal and given by $\diag(\hat{\ubar L}_1,\hat{\ubar L}_2)\hat{\ubar w}$ where $\hat{\ubar L}_1=\Delta\tr+\delta\delta$ and $\hat{\ubar L}_2=\delta+\dd\tr$ are geometric operators on Euclidean space $\R^n_{\ubar x}\setminus\{0\}$ and $\hat{\ubar w}=\diag(|\ubar x|^2,|\ubar x|^1)$. Then \cite[Lemma~4.8]{HintzGlueID} computes the injective boundary spectrum of $L^*$ to be $\{(-1,0),(0,0)\}$; since the volume density of $\hat\gamma$ has weight $-n$, this gives $\surjSpecb(L)=\{(n,0),(n+1,0)\}$.

  If it were not for the support requirements on $\hat h,\hat q$ (and the possible presence of $\hat K$), an application of Theorem~\ref{ThmAs} to the equation $L(h',q')=(\hat f,\hat j)$ would finish the proof upon setting $\hat h=\rho_\circ^{-2}h'$, $\hat q=\rho_\circ^{-1}q'$. (In fact, the asymptotic behavior produced by Theorem~\ref{ThmAs} is slightly stronger still than~\eqref{EqAIAF} as far as the powers of $\log\rho_\circ$ in subleading terms are concerned.) We briefly sketch how to account for the support requirements. One first uses Proposition~\ref{PropAs} to produce, locally near $\pa\ol{\R^n}$, a formal solution of the linearized constraint equations. To solve away the remaining Schwartz errors, one works on suitable weighted b-00-Sobolev spaces on the closure in $\ol{\R^n}$ of $\{|\hat x|\geq\hat R_0+\eta\}$ for some small $\eta>0$, on which the cokernel of $L$ is finite-dimensional; upon eliminating the cokernel as in the proof of Proposition~\ref{PropAsSchw}, one can apply a variant of Corollary~\ref{CorbSchwartz} on such Sobolev spaces (with rapid vanishing at $\pa\ol{\R^n}$, with a fixed weight at $|\hat x|=\hat R_0+\eta$, and with infinite b-00-regularity) to conclude.
\end{proof}

In a similar vein, one can sharpen \cite[Proposition~4.14(2)]{HintzGlueID} so as to ensure that the index set of the solution of the linearized constraint equations on the punctured manifold $X_\circ=[X;\{\fp\}]$ (in the notation of the reference) is log-smooth at the conic point (the lift of $\{\fp\}$) when the right hand side is.

\begin{cor}[Log-smoothness of the glued data]
\label{CorAILogCI}
  We use the notation and terminology of \cite[Theorem~5.2]{HintzGlueID} and make the same assumptions, with index set $\cE$ as in Proposition~\usref{PropAIAF}; that is, the boundary data are a smooth solution $(\gamma,k)$ of the constraint equations on a smooth $n$-dimensional manifold $X$ subject to a local genericity condition (absence of KIDs) near a point $\fp\in X$, and an $\cE$-asymptotically flat solution $(\hat\gamma,\hat k)$ as in Proposition~\usref{PropAIAF}. Then there exist index sets $\hat\cE_\sharp\subset(1+\N_0)\times\N_0$ and $\cE_\sharp\subset(n-2+\N_0)\times\N_0$ for which the conclusions of \cite[Theorem~5.2]{HintzGlueID} hold. That is, there exists a \emph{log-smooth} total family $(\wt\gamma,\wt k)=(\wt\gamma_\eps,\wt k_\eps)_{\eps\in(0,\eps_\sharp)}$ on the total gluing space (see \cite[Definition~3.1]{HintzGlueID}), with boundary data $(\gamma,k)$ and $(\hat\gamma,\hat k)$, which solves the constraint equations.
\end{cor}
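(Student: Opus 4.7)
My plan is to follow the construction of \cite{HintzGlueID} essentially verbatim, substituting the two linearized solvability statements used there (at the asymptotically flat boundary face and at the punctured conic boundary face of the total gluing space) by their log-smooth refinements. The inductive bookkeeping will then force both output index sets $\hat\cE_\sharp$ and $\cE_\sharp$ to lie inside $\Z\times\N_0$, and the same lower-bound estimates as in \cite{HintzGlueID} pin the real parts above $1$ and above $n-2$ respectively.

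First, for the asymptotically flat face I would invoke Proposition~\ref{PropAIAF} in place of \cite[Proposition~4.10(2)]{HintzGlueID}. Since the data $(\hat\gamma,\hat k)$ already have integer index set $\cE=\{(n-2+j,0)\colon j\in\N_0\}$, and since the additional set $\hat\cS$ produced by $\surjSpecb(L_{\hat\gamma,\hat k}\hat w)=\{(n,0),(n+1,0)\}$ is integer-valued, any forcing with index set in $\Z\times\N_0$ yields a solution of the same type. Second, for the punctured boundary $X_\circ$ I plan to prove the analogous sharpening of \cite[Proposition~4.14(2)]{HintzGlueID}: after Douglis--Nirenberg rescaling, the linearization $L_{\gamma,k}$ becomes an underdetermined elliptic b-operator whose normal operator at the lift of $\fp$ is, up to block diagonalization, the Euclidean pair $(\Delta\tr+\delta\delta,\,\delta+\dd\tr)$ of \cite[Lemma~4.6]{HintzGlueID}. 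Its adjoint's injective boundary spectrum is a subset of $\Z$, consisting of translational and rotational Killing modes; hence $\surjSpecb\subset\Z$ once the weight of the volume density is accounted for, and Theorem~\ref{ThmAs} furnishes a solution with an index set in $\Z\times\N_0$.

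Third, the Taylor expansion at the front face proceeds as in \cite{HintzGlueID}, alternately solving the two linear problems and feeding the nonlinear errors back in. Since $P(\gamma,k)$ is polynomial in the metric, its inverse, and its first two derivatives, and since index sets contained in $\Z\times\N_0$ are closed under addition, multiplication, and $\extcup$, each successive right-hand side remains log-smooth, and each correction produced by the two sharpened inverses stays log-smooth. Borel summation on the total gluing space preserves this property, producing a log-smooth formal solution with index sets of the required form. The final fixed-point iteration used in \cite{HintzGlueID} to pass from the formal to the true solution contributes only a correction that is Schwartz at the front face, and hence leaves the index sets there unchanged.

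The hard part will be the conic analogue of Proposition~\ref{PropAIAF}: one must verify that $\surjSpecb$ of $L_{\gamma,k}$ at the lift of $\fp$ is integer-valued and that the orders $\ord((L_{\gamma,k})^*,z)$ are finite at the finitely many integer exponents that actually arise, so that Theorem~\ref{ThmAs} applies cleanly. One must also check that the KID-elimination mechanism from \cite{HintzGlueID}, which projects away from the finite-dimensional cokernel of the linearization on the punctured manifold, preserves the log-smooth class of the right-hand side, i.e.\ does not introduce non-integer shifts in the real parts of exponents. Once this is in place, the rest of the argument reduces to systematic index-set bookkeeping along the iteration.
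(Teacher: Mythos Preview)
Your proposal is correct and follows essentially the same approach as the paper: replace \cite[Propositions~4.10(2) and 4.14(2)]{HintzGlueID} by their log-smooth sharpenings (Proposition~\ref{PropAIAF} and its conic analogue via Theorem~\ref{ThmAs}), then run the formal construction of \cite[\S5.1]{HintzGlueID} and the correction step of \cite[\S5.2]{HintzGlueID} unchanged. The paper's proof is in fact terser than yours---it leaves the conic analogue, the index-set bookkeeping, and the bounds on logarithmic exponents to the reader---so your added detail on $\surjSpecb$ being integer-valued and on closure of $\Z\times\N_0$ under the relevant operations is welcome elaboration rather than deviation.
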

\begin{proof}
  We follow \cite[\S5.1]{HintzGlueID} to construct a formal solution of the constraint equations on the total gluing space, except we now use Proposition~\ref{PropAIAF} instead of \cite[Propositions~4.10(2)]{HintzGlueID}, and the similarly sharpened version of \cite[Proposition~4.14(2)]{HintzGlueID}; this ensures the log-smoothness. (We leave the problem of obtaining bounds on the exponents of the logarithms to the interested reader.) The correction of the formal solution to a true solution is done exactly as in \cite[\S5.2]{HintzGlueID}.
\end{proof}

%%%%%%%%%%%%%%%%%%%%%%%%%%%%%%%%%%%%%%%%%%%%%%%%%%%%%%%%%%%%%%%%%%%%%%
%%%%%%%%%%%%%%%%%%%%%%%%%%%%%%%%%%%%%%%%%%%%%%%%%%%%%%%%%%%%%%%%%%%%%%
\appendix
\section{Characterization of the closed range property}
\label{SCl}

For completeness, we prove here a classical functional analytic result about bounded linear operators between Fr\'echet spaces. References include \cite[\S37]{TrevesTVS}, \cite[Chap.~IV, 6.4 and 7.7]{SchaeferWolffTVS}, and \cite[21.9 and 22.7]{KelleyNamiokaLinearTopologicalSpaces}. For the convenience of the reader, we give self-contained proofs here, following~\cite{HormanderFA}.

Let $E,F$ be Fr\'echet spaces, and let $T\colon E\to F$ be a continuous linear operator. Denote by $p_1\leq p_2\leq\cdots$ a family of seminorms on $E$ so that $\{x\in E\colon p_j(x)\leq 1\}$ is a decreasing fundamental system of neighborhoods of $0\in E$; let $q_1\leq q_2\leq\cdots$ denote an analogous family of seminorms on $F$. For a continuous seminorm $p$ on $E$, define by
\[
  U_p = \{ \lambda\in E^* \colon |\lambda(x)|\leq p(x)\ \forall\,x\in E\} \subset E^*
\]
the polar of $\{x\in E\colon p(x)\leq 1\}$.

\begin{thm}[Characterization of the closed range property]
\label{ThmCl}
  The range $\ran T=T(E)\subset F$ is closed if and only if the set $\ran T^*\cap U_{p_n}\subset E^*$ is weak* closed for all $n\in\N$. In this case, $\ran T=\ann\ker T^*$ and $\ann\ker T=\ran T^*$.
\end{thm}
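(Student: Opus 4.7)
The plan is to prove the two implications of the equivalence separately, extracting the annihilator identities $\ran T=\ann\ker T^*$ and $\ann\ker T=\ran T^*$ from the constructions. For the forward direction, I assume $\ran T$ is closed. Then $\ran T$ inherits a Fr\'echet structure from $F$, so the induced continuous bijection $\tilde T\colon E/\ker T\to\ran T$ is a topological isomorphism by the open mapping theorem. Given $\lambda$ in the weak* closure of $\ran T^*\cap U_{p_n}$, I observe $\lambda\in U_{p_n}$ (as $U_{p_n}$ is weak* closed by Alaoglu) and $\lambda\in\ann\ker T$ (since for any weak* convergent net $T^*\mu_\alpha\to\lambda$ and any $x\in\ker T$, $(T^*\mu_\alpha)(x)=\mu_\alpha(Tx)=0$). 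Thus $\lambda$ descends to a continuous linear functional on $E/\ker T$ bounded by the quotient seminorm $\tilde p_n$, transports via $\tilde T^{-1}$ to a continuous functional on $\ran T$, and extends by Hahn-Banach to $\mu\in F^*$ with $T^*\mu=\lambda$. This shows simultaneously that $\ran T^*\cap U_{p_n}$ is weak* closed and that $\ran T^*=\ann\ker T$.

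For the reverse direction, the Krein-\v{S}mulian theorem immediately upgrades the hypothesis to the weak* closedness of $\ran T^*$, and the bipolar theorem then yields $\ran T^*=\ann\ker T$. To prove $\ran T=\ann\ker T^*$---which, since $\ann\ker T^*=\overline{\ran T}$ holds unconditionally, is equivalent to closedness of $\ran T$---I take $y_0\in\ann\ker T^*$ and define $\phi\colon\ran T^*\to\C$ by $\phi(T^*\mu):=\mu(y_0)$; this is well-defined precisely because $y_0\in\ann\ker T^*$. Granted $\phi$ is weak* continuous on $\ran T^*$, Hahn-Banach in the locally convex space $(E^*,\weak^*)$, whose topological dual is $E$, extends $\phi$ to a weak* continuous functional on $E^*$, necessarily of the form $\lambda\mapsto\lambda(x_0)$ for some $x_0\in E$. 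Then $\mu(y_0)=\phi(T^*\mu)=(T^*\mu)(x_0)=\mu(Tx_0)$ for all $\mu\in F^*$ forces $y_0=T x_0\in\ran T$.

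The hard part is verifying the weak* continuity of $\phi$. By the Banach-Dieudonn\'e theorem it suffices to check that $\phi|_{K_n}$, with $K_n:=\ran T^*\cap U_{p_n}$, is weak* continuous for each $n$, and this in turn reduces to the uniform lifting lemma: for each $n$ there exists $m=m(n)$ with $K_n\subseteq T^*(U_{q_m})$. I plan to deduce this from Baire category applied to the weak* compact Hausdorff (hence Baire) space $K_n$: since $F$ is Fr\'echet one has $F^*=\bigcup_m U_{q_m}$, so $K_n=\bigcup_m(K_n\cap T^*(U_{q_m}))$ is a countable union of weak* compact (hence closed) subsets---each $T^*(U_{q_m})$ being the image of a weak* compact set under the weak*-weak* continuous $T^*$. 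Baire then yields $m_0$ for which $K_n\cap T^*(U_{q_{m_0}})$ has nonempty weak*-interior in $K_n$; convex balancedness forces this interior to contain $0$, and absorbency together with weak* compactness of $K_n$ produces an integer $N$ with $N^{-1}K_n\subseteq T^*(U_{q_{m_0}})$, whence $K_n\subseteq T^*(U_{N q_{m_0}})\subseteq T^*(U_{q_m})$ for a suitable $m$. Given the lifting, a weak* convergent net $T^*\mu_\alpha\to\lambda$ in $K_n$ can be re-represented with bounded $\mu_\alpha\in U_{q_m}$; an Alaoglu-subnet limit $\mu_\infty$ then satisfies $T^*\mu_\infty=\lambda$, and well-definedness of $\phi$ forces $\lim\mu_\alpha(y_0)=\mu_\infty(y_0)=\phi(\lambda)$, completing the proof. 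The main obstacle I expect is executing this lifting cleanly, as it is essentially a dual-side open mapping statement.
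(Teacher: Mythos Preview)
Your proof is correct, and the forward direction coincides with the paper's: both show $\ann\ker T=\ran T^*$ by passing to $E/\ker T$, invoking the open mapping theorem for the induced bijection onto the closed range, and extending via Hahn--Banach.

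For the reverse direction, your route differs from the paper's. The paper first proves Krein--\v{S}mulian from scratch (Proposition~\ref{PropClW}), then in Proposition~\ref{PropClRan} reduces (by quotienting $\ker T$) to the case $T$ injective with $T^*$ surjective, and argues sequentially: given $T x_j\to 0$, it shows $x_j\to 0$ by rescaling so that $T x_j/\eps_j$ is bounded and applying Banach--Steinhaus on each Banach space $E^*_p\subset E^*$; a Cauchy-sequence argument then yields closedness of $\ran T$. By contrast, you work on the dual side throughout: you define $\phi(T^*\mu)=\mu(y_0)$ and establish weak* continuity of $\phi$ via a Baire category argument on the compact sets $K_n$, which furnishes the uniform lifting $K_n\subseteq T^*(U_{q_m})$. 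This lifting is essentially a dual open mapping statement, playing the same structural role as the paper's Banach--Steinhaus step. Your approach is somewhat more abstract and invokes Krein--\v{S}mulian/Banach--Dieudonn\'e as black boxes (indeed twice: once for $\ran T^*$, once for $\ker\phi$), whereas the paper's approach is more hands-on and self-contained. Both are standard; yours is perhaps closer in spirit to the treatment in Schaefer--Wolff, the paper's to H\"ormander. One minor quibble: the inclusion $T^*(U_{N q_{m_0}})\subseteq T^*(U_{q_m})$ need not hold for any $m$ in the given fundamental system, but this is irrelevant since $U_{N q_{m_0}}$ is itself equicontinuous and weak* compact, which is all you use.
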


This is an immediate consequence of the following two results:

\begin{prop}[Closed range and adjoints]
\label{PropClRan}
  $T(E)\subset F$ is closed if and only if $\ran T^*\subset E^*$ is weak* closed. In this case $\ran T=\ann\ker T^*$ and $\ann\ker T=\ran T^*$.
\end{prop}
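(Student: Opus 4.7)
The plan is to first collect the unconditional identifications, then handle each direction separately; the reverse direction is the substantive one.

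Paragraph 1 (Preliminaries): From the adjoint identity $\langle T^*\mu,x\rangle=\langle\mu,Tx\rangle$, one has $\ran T\subseteq\ann\ker T^*$ and $\ran T^*\subseteq\ann\ker T$ with no hypothesis on $T$. The Hahn--Banach separation theorem in the Fr\'echet space $F$ upgrades the first to $\ann\ker T^*=\overline{\ran T}$. For the second, the pre-annihilator of $\ran T^*$ in $E$ is tautologically $\ker T$, so the bipolar theorem in the dual pairing $\langle E,E^*\rangle$ gives $\ann\ker T=\overline{\ran T^*}^{w^*}$; moreover $\ann\ker T$ is automatically weak* closed in $E^*$. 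These identifications reduce the proposition to the equivalence of closedness of $\ran T$ in $F$ and weak*-closedness of $\ran T^*$ in $E^*$, with the claimed identities $\ran T=\ann\ker T^*$ and $\ran T^*=\ann\ker T$ following as soon as the corresponding closures agree with the sets themselves.

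Paragraph 2 (Forward direction): Assume $\ran T$ is closed. Factor $T=\iota\circ\tilde T\circ\pi$, where $\pi\colon E\to E/\ker T$ is the quotient, $\tilde T\colon E/\ker T\to\ran T$ is the induced continuous bijection, and $\iota\colon\ran T\hookrightarrow F$ is the inclusion. Both $E/\ker T$ and $\ran T$ are Fr\'echet, so the open mapping theorem makes $\tilde T$ a topological isomorphism. Dualizing, and using that $\pi^*$ identifies $(E/\ker T)^*$ with $\ann\ker T\subseteq E^*$, that $\iota^*$ is surjective by Hahn--Banach, and that $\tilde T^*$ becomes an isomorphism, one obtains $\ran T^*=\pi^*\tilde T^*\iota^*(F^*)=\pi^*((E/\ker T)^*)=\ann\ker T$, which is weak* closed.

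Paragraph 3 (Reverse direction): Assume $\ran T^*$ is weak* closed; by paragraph~1 this gives $\ran T^*=\ann\ker T$. Passing to $E/\ker T$ (still Fr\'echet) reduces to the case where $T$ is injective and $T^*$ is surjective, and the task becomes to show that $T$ is a topological embedding. Fix a continuous seminorm $p$ on $E$; its polar $U_p\subseteq E^*$ is weak* compact by Banach--Alaoglu. Surjectivity of $T^*$ together with the exhaustion $F^*=\bigcup_n nU_{q_n}$ yields the countable covering $U_p\subseteq\bigcup_n nT^*(U_{q_n})$ of the weak* compact set $U_p$ by convex, symmetric, weak* closed sets. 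A Baire category argument --- executed after passing to the Banach space $\hat E_p$ associated to the seminorm $p$, whose dual unit ball is naturally identified with $U_p$ and is weak* metrizable --- produces an index $n$ and a constant $C>0$ with $U_p\subseteq C\,T^*(U_{q_n})$. Polar duality in $E$ then rephrases this inclusion as $p(x)\leq C\,q_n(Tx)$ for all $x\in E$, which is the closed-range inequality; ranging over $p$ shows $T$ is a topological embedding, hence has closed range.

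The principal obstacle is the Baire step in the reverse direction: one cannot invoke the open mapping theorem directly on $T$, since its range closedness is precisely what is to be proved. Instead, the surjectivity of $T^*$ is leveraged dually through a countable exhaustion of polars, and the Baire argument is run in the Banach completion $\hat E_p$ to sidestep the potential failure of weak* metrizability of $U_p$ in the general Fr\'echet setting. Once the uniform polar inclusion $U_p\subseteq C\,T^*(U_{q_n})$ is in hand, all remaining claims follow mechanically from paragraph~1.
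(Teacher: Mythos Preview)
Your forward direction and the preliminary annihilator identifications match the paper's treatment. The reverse direction, however, follows a genuinely different route. The paper argues sequentially: after reducing to $T$ injective and $T^*$ surjective, it shows that $Tx_j\to 0$ forces $x_j\to 0$ by rescaling so that $Tx_j/\eps_j$ is bounded, invoking surjectivity of $T^*$ to make $\lambda(x_j/\eps_j)$ bounded for every $\lambda\in E^*$, and then applying Banach--Steinhaus on each Banach space $E^*_p$; a separate Cauchy-type argument then upgrades this to closedness of the range. Your route via Baire category and polar duality is more direct and yields the quantitative embedding estimate $p(x)\le C\,q_n(Tx)$ in one stroke, which is arguably cleaner.

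There is one genuine glitch in your Paragraph~3: the assertion that the dual unit ball $U_p$ of $\hat E_p$ is weak* metrizable is false without separability of $\hat E_p$, which is not assumed. The fix is simple and does not require metrizability at all: run Baire on the Banach space $E^*_p=(\hat E_p)^*$ with its \emph{norm} topology. The sets $nT^*(U_{q_n})$ are weak* compact in $E^*$ (continuous images of weak* compacta), hence weak* closed, hence norm-closed in $E^*_p$; they are convex, symmetric, and cover $E^*_p$ by surjectivity of $T^*$. Baire in the complete normed space $E^*_p$ then gives some $N$ with $NT^*(U_{q_N})\cap E^*_p$ of nonempty norm-interior, and convexity plus symmetry yield $rU_p\subset NT^*(U_{q_N})$ for some $r>0$. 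Your polar-duality step then goes through verbatim. Alternatively, one can invoke Baire for compact Hausdorff spaces on $(U_p,w^*)$, but extracting the inclusion $U_p\subset C\,T^*(U_{q_n})$ from a relative weak* interior point requires an extra convexity manoeuvre; the norm-topology version is cleaner.
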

\begin{proof}
  Suppose $\ran T$ is closed. If $\mu=T^*\lambda$ for $\lambda\in F^*$, then for $x\in\ker T$ we have $\mu(x)=(T^*\lambda)(x)=\lambda(T x)=0$. If on the other hand $\mu\in\ann\ker T$, then $\mu$ induces a bounded linear map $[\mu]\colon E/\ker T\to\C$. Set $\lambda_0\colon\ran T\to\C$, $\lambda_0(T x)=\mu(x)=[\mu]([x])$, where $[x]=x+\ker T\in E/\ker T$; this is well-defined since $T x=0$ implies $[x]=0$. But by the Open Mapping Theorem, the map $E/\ker T\to\ran T$ induced by $T$ is an isomorphism, and therefore its inverse $T x\mapsto[x]$ is continuous. Therefore, $\lambda_0$ is continuous, and by the Hahn--Banach theorem has a continuous extension $\lambda\in F^*$. By construction, $(T^*\lambda)(x)=\lambda(T x)=\lambda_0(T x)=\mu(x)$ for all $x\in E$, so $T^*\lambda=\mu$.

  Conversely, suppose $\ran T^*$ is weak* closed. Then for $V=\ann\ran T^*=\ker T\subset E$, we have $\ann V=\ran T^*$. Factor now $T=T_1 \pi$, where $T_1\colon E/V\to F$ is the (injective) map induced by $T$, and $\pi\colon E\to E/V$ is the projection. Then $T^*=\pi^* T_1^*$, with $\pi^*\colon(E/V)^*\to E^*$ being injective with range $\ann V=\ran T^*$. Therefore, $T_1^*$ is surjective. Replacing $T$, $E$ by $T_1$, $E/V$, we may thus assume that $T$ is injective and $T^*$ is surjective.

  We first show that if $(x_j)_{j\in\N}$ is a sequence in $E$ with $T x_j\to 0$, then $x_j\to 0$. For all $n\in\N$, we have $q_n(T x_j)\to 0$, and thus we may inductively find $j_1<j_2<\ldots$ so that $q_n(T x_j)<\frac{1}{n}$ when $j\geq j_n$ (and thus $q_m(T x_j)<\frac{1}{n}$ for $m\leq n$). Let $\eps_j=\frac{1}{n}$ for $j_n\leq j<j_{n+1}$, which thus satisfies $\eps_j\to 0$; then $T x_j/\eps_j$ is bounded. Given any $\mu\in F^*$, the sequence $\mu(T x_j/\eps_j)=(T^*\mu)(x_j/\eps_j)$ is then bounded; since $T^*$ is surjective, this implies that $\lambda(x_j/\eps_j)$ is bounded for all $\lambda\in E^*$. We claim that this implies that $x_j/\eps_j$ is bounded in $E$. To see this, note that if $p$ is any continuous seminorm on $E$, then the subset $E^*_p\subset E^*$ of $\lambda\in E^*$ which are continuous with respect to $p$ is a Banach space with norm $\|\lambda\|_p=\sup_{x\in E,\ p(x)=1} |\lambda(x)|$. But $x_j/\eps_j\colon E^*_p\to\C$, $\lambda\mapsto\lambda(x_j/\eps_j)$, is continuous in view of $|\lambda(x_j/\eps_j)|\leq\|\lambda\|_p p(x_j/\eps_j)$, and it is pointwise bounded; therefore, it is uniformly bounded by the Banach--Steinhaus theorem. This implies that $p(x_j/\eps_j)$ is bounded, and therefore $x_j\to 0$ since $p$ was arbitrary.

  Finally, if $T x_j\to y\in F$, then via a diagonal argument we can find a subsequence, again denoted $x_j$, so that $q_n(T x_{j+1}-T x_j)<2^{-2 j}$ for all $j\geq n$. Thus $T(2^j(x_{j+1}-x_j))\to 0$, which implies that $2^j(x_{j+1}-x_j)\to 0$, and therefore $x_1+\sum_{j=1}^\infty 2^{-j}(2^j(x_{j+1}-x_j))$ converges in $E$ to a limit $x$ which satisfies $T x=y$. The proof is complete.
\end{proof}

\begin{prop}[Weak* closed subspaces]
\label{PropClW}
  A linear subspace $W\subset E^*$ is weak* closed if and only if $W\cap U_{p_n}$ is weak* closed for all $n\in\N$.
\end{prop}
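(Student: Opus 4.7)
The easy direction is straightforward: $U_{p_n}$ is the polar of the $0$-neighborhood $\{p_n \leq 1\}$ in $E$, hence weak* compact by the Banach--Alaoglu theorem and in particular weak* closed, so if $W$ is weak* closed then $W \cap U_{p_n}$ is weak* closed as an intersection of two such sets.

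For the nontrivial direction, assume $W \cap U_{p_n}$ is weak* closed for each $n$. Since $W$ is a subspace, also $W \cap r U_{p_n} = r(W \cap U_{p_n})$ is weak* closed (hence weak* compact, being contained in the weak* compact set $r U_{p_n}$) for every $r > 0$ and every $n$. Given $\lambda_0 \in E^* \setminus W$, the plan is to construct a compact set $K \subset E$ (in fact a null sequence together with its limit $0$) such that the polar neighborhood
\[
V := \{\mu \in E^* : |(\mu - \lambda_0)(x)| \leq 1 \text{ for all } x \in K\}
\]
is disjoint from $W$. Since $V$ is a neighborhood of $\lambda_0$ in the topology $\tau_c$ on $E^*$ of uniform convergence on compact subsets of $E$, and since $\tau_c$ lies between the weak* topology and the Mackey topology $\tau(E^*, E)$ (in particular it is compatible with the dual pair $\langle E, E^* \rangle$), the Mackey--Arens theorem ensures that convex subsets of $E^*$ are $\tau_c$-closed iff weak* closed; producing such a $V$ therefore yields the weak* openness of $E^* \setminus W$.

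The construction of $K$ proceeds inductively. At stage $n$, given finite sets $F_1, \ldots, F_{n-1} \subset E$ already built, consider
\[
D_n := \{\mu \in W \cap n U_{p_n} : |(\mu - \lambda_0)(x)| \leq 1 \text{ for all } x \in F_1 \cup \cdots \cup F_{n-1}\},
\]
a weak* compact convex set not containing $\lambda_0$. Hahn--Banach separation in the weak* topology produces finitely many elements of $E$ separating $\lambda_0$ from $D_n$; by exploiting the bound $|\mu(x)| \leq n\, p_n(x)$ available on $n U_{p_n}$, together with a careful rescaling (using the subspace structure of $W$, so that scalar multiples of elements of $W$ stay in $W$), one arranges these elements to lie in $\{x : p_n(x) \leq 1/n\}$, and takes this as $F_n$. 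Then $K := \{0\} \cup \bigcup_n F_n$ is a null sequence with its limit, hence compact in $E$. Since every $\mu \in E^*$ is continuous on $E$, some $n$ satisfies $\mu \in n U_{p_n}$, and any $\mu \in W$ then lies in $W \cap n U_{p_n}$; the defining property of $F_n$ forces some $x \in F_1 \cup \cdots \cup F_n \subset K$ with $|(\mu - \lambda_0)(x)| > 1$, so $\mu \notin V$, as required.

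The main obstacle is the scaling step in the inductive construction: a direct Hahn--Banach separation produces elements of $E$ with no a priori bound on $p_n$, and naively shrinking them also shrinks the separation margin. The balance is delicate precisely because $n U_{p_n}$ grows to exhaust $E^*$, and the separating functionals need to remain effective at every level. Handling this---essentially the content of the Banach--Dieudonn\'e theorem---is where the subspace hypothesis and the weak* compactness of each $W \cap r U_{p_n}$ enter crucially.
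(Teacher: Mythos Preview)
Your framing via Mackey--Arens is legitimate and is a genuine alternative to the paper's endgame: once a null sequence $(x_j)$ is built, you observe that its polar is a $\tau_c$-neighborhood of $\lambda_0$ disjoint from $W$, and since $\tau_c$ lies below the Mackey topology $\tau(E^*,E)$ (closed absolutely convex hulls of compact sets in a Fr\'echet space are compact, hence weakly compact), convex $\tau_c$-closed implies weak* closed. The paper instead pushes further: it feeds the null sequence into the map $\phi\colon E^*\to c_0$, $\lambda\mapsto(\lambda(x_j))_j$, applies Hahn--Banach to extract $a\in\ell^1$, and sets $x=\sum_j a_j x_j\in E$, yielding a \emph{single} element of $E$ with $\lambda_0(x)=1$ and $\lambda(x)=0$ on $W$, hence an honest weak* neighborhood without invoking Mackey--Arens.

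The genuine gap is the inductive construction of the null sequence, which you yourself flag in the last paragraph as ``essentially the content of the Banach--Dieudonn\'e theorem''---i.e.\ the very statement being proved. Hahn--Banach separation gives some $y\in E$ with $\Re(\mu-\lambda_0)(y)\geq\delta>0$ on the compact set $D_n$, but to force $|(\mu-\lambda_0)(y)|>1$ you must scale $y$ by $1/\delta$, and nothing bounds $\delta$ from below; the subspace property of $W$ and the bound $|\mu(x)|\leq n\,p_n(x)$ do not rescue this. The paper avoids Hahn--Banach at this step entirely. Working with $W_n:=W\cap(\lambda_0+U_{p_n})$ rather than $W\cap n U_{p_n}$, the inductive hypothesis says the set of $\lambda\in W_{n+1}$ satisfying the earlier constraints is disjoint from $W_n$, hence from $\lambda_0+U_{p_n}$. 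The identity
\[
  \lambda_0+U_{p_n}=\bigcap_{p_n(x)\leq 1}\{\lambda:|\lambda(x)-\lambda_0(x)|\leq 1\}
\]
is then the key: the size constraint $p_n(x)\leq 1$ is built into this description, so weak* compactness together with the finite intersection property immediately produces finitely many new $x_j$ with $p_n(x_j)\leq 1$ whose constraints cut the intersection with $W_{n+1}$ down to the empty set. No rescaling is needed; this is the idea your sketch is missing.
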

\begin{proof}
  Since $U_{p_n}$ is weak* compact by the Banach--Alaoglu theorem, one implication is obvious. Conversely, suppose $W\cap U_{p_n}$ is weak* closed, thus weak* compact, for all $n\in\N$. Let $\lambda_0\notin W$. We first claim that there exists a sequence $(x_j)_{j\in\N}$ with $x_j\to 0$ so that $\lambda\in E^*$,
  \begin{equation}
  \label{EqClWDiff}
    |\lambda(x_j)-\lambda_0(x_j)|\leq 1
  \end{equation}
  for all $j$ implies that $\lambda\notin W$. Let $N$ be such that $\lambda_0\in C U_{p_N}$ for some $C>0$. By relabeling $C p_N,C p_{N+1},\ldots$ as $p_1,p_2,\ldots$, we may assume that $\lambda_0\in U_{p_1}$ (and thus $\lambda_0\in U_{p_n}$ for all $n$). Define $W_n:=W\cap(U_{p_n}+\lambda_0)\subset W\cap U_{2 p_n}$, which is weak* compact. Suppose that we have found $x_1,\ldots,x_k\in E$ so that~\eqref{EqClWDiff} for $1\leq j\leq k$ implies $\lambda\notin W_n$; for $n=1$, we can find such elements of $E$ since $W_1$ is weak* closed and $\lambda_0\notin W_1$. Note then that
  \[
    W_{n+1} \cap \{ \lambda\in E^* \colon |\lambda(x_j)-\lambda_0(x_j)|\leq 1,\ j=1,\ldots,k \}
  \]
  is a weak* compact subset of $W_{n+1}$ that is disjoint from $W_n$; it has empty intersection with $\bigcap_{x\in E,\ p_n(x)\leq 1} C_x=U_{p_n}+\lambda_0$ where we define the weak* closed set $C_x=\{\lambda\in E^*\colon|\lambda(x)-\lambda_0(x)|\leq 1\}$. By the finite intersection property, there thus exist finitely many $x_{k+1},\ldots,x_{k+l}\in E$ with $p_n(x_j)\leq 1$, $k+1\leq j\leq k+l$, with the desired property.

  Next, let $c_0$ denote the space of complex-valued null sequences, with the supremum norm $\|\cdot\|_\infty$. Define the map
  \[
    \phi \colon E^* \to c_0,\qquad \lambda \mapsto (\lambda(x_j))_{j\in\N}.
  \]
  Then $\|\phi(\cdot)\|_\infty$ is a continuous seminorm on $E^*$, and $\|\phi(\lambda-\lambda_0)\|_\infty>1$ for all $\lambda\in W$. By the Hahn--Banach theorem, there exists a linear map $f\colon E^*\to\C$ so that
  \begin{equation}
  \label{EqClWf}
    f|_W=0,\qquad
    f(\lambda_0)=1,
  \end{equation}
  and $|f(\lambda)|\leq C\|\phi(\lambda)\|_\infty$. Applying the Hahn--Banach theorem again, we can thus extend the map $c_0\ni\phi(\lambda)\mapsto f(\lambda)\in\C$ to a continuous linear functional $a\in(c_0)^*=\ell^1$. That is, $a=(a_n)_{n\in\N}$ is absolutely summable, and
  \[
    f(\lambda) = \sum_{n=1}^\infty a_n \phi(\lambda)_n = \lambda(x),\qquad x := \sum_{n=1}^\infty a_n x_n\in E.
  \]
  In view of~\eqref{EqClWf}, this implies that $\lambda_0$ has a weak* neighborhood disjoint from $W$, finishing the proof.
\end{proof}

%%%%%%%%%%%%%%%%%%%%%%%%%%%%%%%%%%%%%%%%%%%%%%%%%%%%%%%%%%%%%%%%%%%%%%
\bibliographystyle{alphaurl}

%\bibliography{
%/scratch/users/hintzp/ownCloud/research/bib/math,
%/scratch/users/hintzp/ownCloud/research/bib/mathcheck,
%/scratch/users/hintzp/ownCloud/research/bib/phys
%}

\end{document}